\numberwithin{equation}{section}
\theoremstyle{plain} 
\newtheorem{thm}{Theorem}[section]
\newtheorem{prop}[thm]{Proposition}
\newtheorem{lem}[thm]{Lemma}
\newtheorem{claim}{Claim}
\theoremstyle{definition}
\newtheorem{rem}[thm]{Remark}
\def\1{\mathbf{1}}
\def\for{\quad\text{ for}}
\def\Mod{\text{Mod}}
\title[Slow blow up for 4D heat equation]{A SLOW BLOW UP SOLUTION FOR THE FOUR DIMENSIONAL energy critical SEMI LINEAR HEAT EQUATION}
\author{Tongtong Li}
\address{Academy of Mathematics and Systems Science, the Chinese Academy of Sciences, Beijing 100190, China.}
\email{lttlydy@139.com}
\author{Liming Sun}
\email{lmsun@amss.ac.cn}
\author{Shumao Wang}
\email{wangshumao@amss.ac.cn}
\date{\today\, (Last Typeset)}
\subjclass[2020]{Primary 35B44 35K58; Secondary 35K55}
\keywords{blow-up; energy critical; heat equation}
\begin{document}

\maketitle
\begin{abstract}
We consider the energy critical four dimensional semi-linear heat
equation 
\[
\partial_{t}v-\Delta v-v^{3}=0, \quad(t,x)\in \mathbb{R}\times \mathbb{R}^4.
\]

Formal computation of Filippas et al. (R. Soc. Lond. Proc. 2000) conjectures the existence of a sequence of type II blow-up solutions with various blow-up rates
\[
\|v(t)\|_{L^\infty(\mathbb{R}^4)}\approx \frac{|\log(T-t)|^{\frac{2L}{2L-1}}}{(T-t)^L}
,\quad L=1,2,\cdots.\]
Schweyer (J. Funct. Anal. 2012) rigorously constructs a type II blow-up solution for the case $L=1$. In this paper, we  show  the existence of type II
blow-up solution for $L=2$. The method here  could be generalized to deal with all the cases $L\geq 2$. 
%It is inspired by Raphael and Schweyer's work on quantized slow blow-up harmonic maps and
\end{abstract}

\setlength{\parskip}{0.5em}

\section{Introduction}

\subsection{Setting of the problem}

Consider the following semi-linear heat equation 
\begin{equation}\label{flow-main}
\begin{cases}
\partial_{t}v-\Delta v=|v|^{p-1}v, & (t,x)\in\mathbb{R}\times\mathbb{R}^{d},\\
v\mid_{t=t_{0}}=v_{0}.
\end{cases}
\end{equation}
Because of the simplicity of the nonlinearity, problem \eqref{flow-main} has been widely considered as a popular model for testing the methods designed to analysis the behavior of solutions near singularity formation.
It has been extensively studied in the literature, for example \cite{giga1985asymptotically,giga1987characterizing,herrero1993blow,mizoguchi2004type,collot2017dynamics}. It is well-known that for a large class of initial data (for instance, bounded and continuous) there is a unique maximal classical solution  $v(t,x)$ for $t\in (0,T)$. If $T$ is finite, then $u$ will blow up at time $t=T$. There are two types of blow-ups depending on the rate
%When the maximal existence time is finite, the solutions are divided
%into two types : 
\[
\begin{cases}
\limsup_{t\rightarrow T}(T-t)^{\frac{1}{p-1}}||v(t)||_{L^{\infty}(\mathbb{R}^{d})}<+\infty & \text{type I},\\
\limsup_{t\rightarrow T}(T-t)^{\frac{1}{p-1}}||v(t)||_{L^{\infty}(\mathbb{R}^{d})}=+\infty & \text{type II}.
\end{cases}
\]

In this article, we will focus on the radial type II blow-up solution in energy critical case, that is $p=p_{S}:=\frac{d+2}{d-2}$. In this
situation, the total dissipated energy 
\begin{equation}
E(v)=\frac{1}{2}\int_{\mathbb{R}^{d}}|\nabla v|^{2}dx-\frac{1}{p+1}\int_{\mathbb{R}^{d}}|v|^{p+1}dx
\end{equation}
is left invariant by the scaling symmetry of the problem 
\[
v(t,x)\mapsto\lambda^{\frac{2}{p-1}}v\left(\lambda^{2}t,\lambda x\right).
\]

The blow-up of \eqref{flow-main} is almost completely understood in the sub-critical range $1<p<p_S$, for instance, by \cite{filippas1992refined, giga1985asymptotically,giga1987characterizing,giga2004blow,quittner1999priori,velazquez1992higher}. The solution always blows up in type I in this range. The existence of type II blow-up has been established in various settings, for instance by  \cite{herrero1993blow,herrero1994explosion,mizoguchi2004type} when $p>p_{JL}$, where 
\begin{align}
    p_{{JL}}=\begin{cases}
\infty & \text { if } d \leq 10, \\
1+\frac{4}{d-4-2 \sqrt{d-1}} & \text { if } d \geq 11 .
\end{cases}
\end{align}
On the other hand, when $p_S<p<p_{JL}$,  \citet{matano2004nonexistence} excludes the occurrence of a type II blow-up for radial solutions. Therefore, in dimension $3\leq d\leq 10$, the choice $p = p_S$ is the only one for which a type II blow-up occurs for radial data.

Recently, there are active researches in the energy critical case $p=p_S$. In the pioneering work by \citet*{filippas2000fast}, they find that $u$ can exhibit type II blow-up in finite time in lower dimensions in the energy critical case $p=p_S$.  They formally obtain sign changing type II blow-up solutions using the matched asymptotic expansion technique.
Also, they give a sequence of blow-up speeds (corrected by \citet{harada2020type}) 
\begin{align}\label{blow-rate-all}
||v(t)||_{L^{\infty}(\mathbb{R}^{d})}\approx\begin{cases}
(T-t)^{-L} & d=3,\\
\frac{|\log(T-t)|^{\frac{2L}{L-1}}}{(T-t)^{L}} & d=4,\\
(T-t)^{-3L} & d=5,\\
{(T-t)^{-\frac{5}{2}}|\log(T-t)|^{-\frac{15}{4}}} & d=6,
\end{cases}
\end{align}
where $L\geq 1$ is an integer. Recently, there is a surge of interest in constructing such type II blow-up solutions as predicted by \cite{filippas2000fast}.
% \[
% ||v(t)||_{L^{\infty}(\mathbb{R}^{d})}\approx\begin{cases}
% (T-t)^{-L} & d=3,\\
% \frac{|\log(T-t)|^{\frac{2L}{L-1}}}{(T-t)^{L}} & d=4,\\
% (T-t)^{-L} & d=5,\\
% \frac{1}{(T-t)^{\frac{1}{4}}|\log(T-t)|^{\frac{15}{8}}} & d=6.
% \end{cases}
% \]

% Recently, \citet{harada2020type} corrected
% the above to 
% \[
% ||v(t)||_{L^{\infty}(\mathbb{R}^{d})}\approx\begin{cases}
% (T-t)^{-L} & d=3,\\
% \frac{|\log(T-t)|^{\frac{2L}{L-1}}}{(T-t)^{L}} & d=4,\\
% (T-t)^{-3L} & d=5,\\
% \frac{1}{(T-t)^{\frac{5}{2}}|\log(T-t)|^{\frac{15}{4}}} & d=6.
% \end{cases}
% \]
\citet{schweyer2012type} first rigorously constructs a radial blow-up 
solution in the case $d=4$ and $L=1$. He uses a strategy developed
in the study of geometrical dispersive problems by \citet{merle2005blow-up,merle2006sharp,merle2011} and \citet{raphael2012stable}.  The nature of his approach is energy estimates and making no use of the maximum principle. Meanwhile, del Pino, Wei and their collaborators develop an inner-outer gluing method  and lead to a series of works on construction. They apply their methods to construct type II blow-up solutions in several cases. To be more precise, \citet{del2020type} constructs solutions of $d=3$ and all $L\geq 1$.   \citet{del2019type} establishes the existence of solutions of $d=5$ and $L=1$. Later on,  \citet{harada2020higher} completes the construction for $d=5$ and all $L\geq 2$. Using this inter-outer gluing method, \citet{harada2020type} also shows the existence of type II blow-up solution with the specific rate for $d=6$ in \eqref{blow-rate-all}.

% In \citet{del2019type}, and their collaborators succeeded to construct a
% nonradial blow-up solution for multipoint-blow-up by developing inner-outer
% gluing method. In addition, they constructed solutions of all the
% blow-up rates in $d=3$ predicted in \cite{filippas2000fast} and $L=1$ in $d=5$
% case in \cite{del2019type}. \citet{harada2020higher} gave an example of $L\geq2$ in
% $d=5$ case. In 2020, he proved the existence of the
% blow up solution in $d=6$ with all the rates in \cite{harada2020type}.

One may wonder what will happen for $d\geq 7$.  \citet*{collot2017dynamics} 
proves no existence of type II blow-up solution in $d\geq7$ cases near the ground state
solitary wave
and gives a complete classification of its asymptotic behavior. Recently, \citet{wang2021refined} precludes the type II blow-up for all \textit{positive} solutions in  $d\geq 7$.  

After all the works mentioned above, it seems that the cases $d=4$ and $L\geq 2$ are still unsettled. The goal of this paper is to fill the gap of these remaining cases.

%So an interesting and essential problem after this is to give a complete
%classification in the case $3\leq d\leq6$.

\subsection{Statement of the result}

We consider the energy critical semi-linear heat equation in dimension $d=4$
\begin{equation}
\begin{cases}
\partial_{t}v-\Delta v-v^{3}=0, & (t,x)\in\mathbb{R}\times\mathbb{R}^{4},\\
v\mid_{t=t_{0}}=v_{0}.
\end{cases}\label{eq1}
\end{equation}
In this article, we construct a radial type II blow-up solution based
on the Talenti-Aubin soliton 
\begin{equation}
Q(r)=\frac{1}{1+\frac{r^{2}}{8}},\quad \Delta Q+Q^{3}=0.
\end{equation}

Our result is the following 
\begin{thm}
\label{thm:main}For any $\alpha^{*}>0$, there exists $C^{\infty}$
radial initial data $v_{0}$ with 
\begin{equation}
E(Q)<E(v_{0})<E(Q)+\alpha^{*}
\end{equation}
such that the solution to (\ref{eq1}) blows up in finite time $T=T(v_{0})<\infty$
in a type II regime: there exists $v^{*}\in\dot{H}^{1}$ such that 
\begin{equation}
v(t,x)-\frac{1}{\lambda(t)}Q\left(\frac{x}{\lambda(t)}\right)\rightarrow v^{*}\quad \text{in}\ \dot{H}^{1}\ \text{as}\ t\to T,
\label{1.8}\end{equation}
and for some $c(v_0)>0$, 
\begin{equation}
\lambda(t)=c(v_{0})(1+o_{t\to T}(1))\frac{(T-t)^{2}}{|\log(T-t)|^{\frac{4}{3}}}.
\label{1.9}\end{equation}
\end{thm}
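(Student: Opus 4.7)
The plan is to extend Schweyer's $L=1$ energy method (based on modulation theory and high-order Sobolev coercivity, with no recourse to the maximum principle) to accommodate the two modulation parameters needed in the $L=2$ regime. I would begin by passing to renormalized variables $v(t,x) = \lambda(t)^{-1} u(s, x/\lambda(t))$ with $ds/dt = 1/\lambda^2$, so that \eqref{eq1} becomes
\begin{equation*}
\partial_s u - \Delta u - u^3 - \frac{\lambda_s}{\lambda}\Lambda u = 0, \qquad \Lambda u := u + y\cdot \nabla u,
\end{equation*}
and the linearized operator at the soliton is $\mathcal{L} = -\Delta - 3Q^2$, with $\Lambda Q\in\ker\mathcal{L}$ lying only logarithmically in $L^2$ in four dimensions. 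This dimension-four resonance is the source of every logarithm in the blow-up rate \eqref{1.9}.

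Next I would construct an approximate profile
\begin{equation*}
Q_{b_1,b_2}(y) = Q(y) + b_1 T_1(y) + b_2 T_2(y) + \sum_{k=3}^{K}\bigl(b_1^{k} S_{k,0}(y) + b_2\, b_1^{k-2} S_{k,1}(y)\bigr),
\end{equation*}
truncated at the inner-outer matching scale $|y|\sim 1/\sqrt{b_1}$, where the radial profiles $T_i, S_{k,j}$ are determined by a $b$-hierarchy of the form $\mathcal{L} T_{i+1} = -\Lambda T_i$ plus suitable corrections in the resonant direction $\Lambda Q$. The hierarchy yields schematic modulation ODEs
\begin{equation*}
\frac{\lambda_s}{\lambda} + b_1 \simeq 0, \qquad (b_1)_s + c_1 b_1^2 - b_2 \simeq 0, \qquad (b_2)_s + c_2 b_1 b_2 \simeq 0,
\end{equation*}
whose coefficients $c_1, c_2$ carry explicit logarithmic corrections arising from the resonance; the formal blow-up solution of the log-corrected system is what reproduces precisely the rate in \eqref{1.9}.

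Third, I would decompose $u(s,y) = Q_{b_1(s),b_2(s)}(y) + \varepsilon(s,y)$ and fix the three modulation parameters $(\lambda, b_1, b_2)$ by imposing three orthogonality conditions $\langle \varepsilon, \Phi_M^{(j)}\rangle = 0$ for $j=0,1,2$, where $\Phi_M^{(j)}$ are spatially truncated duals of $\Lambda Q, T_1, T_2$. The heart of the argument is then a high-order energy estimate for $\varepsilon$ at a Sobolev level $\dot{H}^{2k}$ with $k$ sufficiently large: using the radial factorization $\mathcal{L} = A^* A$ and the subcoercivity of $\mathcal{L}^{k}$ modulo the three orthogonalities, one constructs a Lyapunov-type functional $\mathcal{E}_{2k}(\varepsilon)$ whose dissipation gains a power of $b_1$, closing a bootstrap in a trapped regime of the form $b_1 \sim |\log s|^{-4/3}/s$, $b_2 \sim b_1^2$, $\mathcal{E}_{2k}(\varepsilon) \ll b_1^{2K+\eta}$.

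Finally, the linearization of the $b$-system around its formal blow-up solution has exactly one unstable direction (along $b_2$), so a Brouwer-type topological argument selects, within a codimension-one family of smooth radial initial data, one trajectory that stays trapped up to $s = +\infty$, i.e., $t\uparrow T$; inversion back to physical variables yields \eqref{1.8}--\eqref{1.9}. The main obstacle will be the interplay between the inner-outer matching at $|y|\sim 1/\sqrt{b_1}$ and the logarithmic resonance: each $T_i$ grows like a power of $\log|y|$ at infinity, the truncation injects logarithmic error terms that must be cancelled order by order through the $\Lambda Q$-corrections in the $b$-hierarchy, and the precise algebra of these cancellations is what produces the exponent $4/3$ rather than Schweyer's $L=1$ exponent $2$. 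Maintaining the coercivity of $\mathcal{E}_{2k}$ uniformly as $b_1 \to 0$ in the presence of these slowly-decaying logarithmic tails is the central technical difficulty.
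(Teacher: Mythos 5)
Your overall strategy matches the paper's: self-similar reparametrization, an approximate profile $Q_b$ built from $T_1, T_2$ and higher corrections $S_j$ solving a $\mathcal{L}$-hierarchy with $\Lambda Q$-resonance corrections, a decomposition $u = Q_b + \varepsilon$ with three orthogonality conditions, a high-order Lyapunov monotonicity for $\Xi_{2k}$ closed by a bootstrap, and a topological selection of initial data. The logarithmic bookkeeping you describe at the matching scale $1/\sqrt{b_1}$ is also essentially what the paper does.

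However, there is a genuine gap in your selection step. You state that ``the linearization of the $b$-system around its formal blow-up solution has exactly one unstable direction (along $b_2$),'' and therefore propose a codimension-one Brouwer argument. This misses the second unstable direction: the linearized Hamiltonian $H = -\Delta - 3Q^2$ has a unique strictly negative eigenvalue $-\varsigma$ with exponentially decaying eigenfunction $\psi$, and the projection $\tau(s) = (\varepsilon, \psi)$ satisfies $\tau_s \approx \varsigma\tau$, i.e., it grows exponentially in $s$ if uncontrolled. This mode cannot be absorbed into the energy bounds, because the sub-coercivity of $H$ on the orthogonal complement of $\Phi_M$ holds only modulo $(\varepsilon,\psi)^2$: without an a priori bound of the form $|\tau| \lesssim b_1^{7/2}/|\log b_1|$, the first dissipative term $-\int \varepsilon_6 H \varepsilon_6$ in the $\Xi_6$ monotonicity has the wrong sign and the bootstrap does not close. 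In Schweyer's $L=1$ case, $\tau$ is already the unique unstable direction and is handled by a one-dimensional Brouwer argument; for $L=2$ one has both $\tilde V_2$ (from the $b$-system, eigenvalue $2/3$ of the matrix $A$) and $\tilde\tau$ (from $H$, eigenvalue $\varsigma$) unstable, and the paper runs a two-dimensional Brouwer argument on $\mathbb{D} = [-1,1]\times[-1,1]$, mapping $(\tilde V_2(0), \tilde\tau(0))$ to exit values and using the strictly outgoing behavior of both. Reducing this to a codimension-one selection would not work. The paper explicitly flags the simultaneous control of these two unstable directions as, alongside the sharper approximate solution, the main technical novelty over $L=1$; your proposal needs to be revised to incorporate it.
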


\begin{rem}
(i) Our method relies on the approach to construct slow blow-up dynamics
for the corotational energy-critical harmonic heat flow of \citet{raphael2015quantized}. People have observed a deep connection between dimension four
energy critical semi-linear heat equation and two-dimensional harmonic map flows, and interested
readers can check \cite{raphael2015quantized,davila2020singularity} and references therein. Our result verifies the existence of the blow-up speed corresponding to $L=2$ as conjectured
in \cite{filippas2000fast}
because $v$ will blow up in the speed of the reciprocal of $\lambda$.

(ii) The construction to the case of $L=2$ is much more complicate than the one of $L=1$. First, the approximate solution in the case of $L=1$ needs to be sharpened here in order to get \eqref{1.9}. This requires a better approximation to the blow-up solution to further reduce the errors which are produced in the case of $L=1$. See Step 1 in subsection \ref{subsection:strategy of proof} for more details.  Second, we need to deal with two different ``unstable directions''
in our setting. One is from the Schr\"{o}dinger operator $H$ in (\ref{def:H}), and the
other one is from modulation parameter $b=(b_{1},b_{2})$ in (\ref{roughModeq-1}).
In this problem, we can deal with them at the same time. See Step 3 in subsection \ref{subsection:strategy of proof}. 
For the case of $L> 2$, these two types of difficulty persist and are all the essential ones. Actually, one can introduce an appropriate class of functions to continue the approximate process (see a similar argument in \cite{raphael2015quantized}). This will also produce more unstable directions but can be handled similarly as here. Since the proof of $L>2$ is just a tautology of the idea here, we will not present it in this paper.
%To save the length of this paper, we just choose to present the case of $L=2$ rigorously.

(iii) The method we rely on is a powerful tool. It has been applied to construct finite time blow-up solutions in Schr\"{o}dinger map \cite{merle2013blowup},  focusing energy supercritical Schr\"{o}dinger equation \cite{merle2015type}, defocusing energy supercritical Schr\"{o}dinger equation in \cite{merle2022blow},  energy supercritical wave equation in \cite{collot2018type}, nonradial energy supercritical heat equation in \cite{collot2017nonradial}.

% Our method in this paper are inspired by the one in \cite{raphael2015quantized}. 
% and the proof for other $L$ is completely similar. As a result, we
% verify the existence of the quantized blow-up speeds as conjectured
% in \cite{filippas2000fast}.

% (iii) People have observed a deep connection between dimension four
% energy critical and two-dimensional harmonic map flows, and interested
% readers can look for \cite{raphael2015quantized,davila2020singularity}. Del Pino, Musso, Wei and
% their collaborators also pointed out that the method in {[}12{]} implies
% the slower blow-up rates for energy critical heat equation in the
% case $d=4$ and $L\geq 2$. 
\end{rem}

\subsection{Notations }

We introduce the differential operator 
\[
\Lambda f:=f+y\cdotp\nabla f\quad \text{(energy critical scaling)}.
\]
Given a positive number $b_{1}>0$, we let 
\[
B_{0}:=\frac{1}{\sqrt{b_{1}}},\ B_{1}:=\frac{|\log b_{1}|}{\sqrt{b_{1}}}.
\]
Given a parameter $\lambda>0$, we let 
\[
u_{\lambda}(r):=\frac{1}{\lambda}u(y)\quad \text{with}\ y=\frac{r}{\lambda}.
\]
We let $\chi$ be a smooth non-increasing cutoff function with 
\[
\chi(y)=\begin{cases}
1 & \text{for}\ y\leq1,\\
0 & \text{for}\ y\geq2,
\end{cases}
\]
and use the notation 
\[
\int f:=\int_{0}^{+\infty}f(r)r^{3}dr.
\]

\subsection{Strategy of the proof}\label{subsection:strategy of proof}

Now we sketch the main points in the proof of Theorem \ref{thm:main}.

\emph{\uline{Step1: Approximate solution $Q_{b}$ .}} We first reparametrize (\ref{eq1})
by
\begin{equation}
v(t,r)=\frac{1}{\lambda}u(s,y),\quad y=\frac{r}{\lambda(t)},\quad \frac{ds}{dt}=\frac{1}{\lambda^{2}(t)}
\end{equation}
which leads to
\begin{equation}
\partial_{s}u+b_{1}\Lambda u-\Delta u-u^{3}=0,\ b_{1}=-\frac{\lambda_{s}}{\lambda}.
\end{equation}
We look for blow-up solution $u$ close to $Q$ in $\dot{H}^{1}$
topology. In this case, $b_{1}$ remains small and the flow is controlled by the linearized Hamiltonian
\begin{equation}
H=-\Delta-V=-\Delta-3Q^{2}
\end{equation}
which has a resonance by the scaling symmetry
\begin{equation}
H\Lambda Q=0,\quad \Lambda Q\sim-\frac{8}{y^{2}}\quad \text{as}\quad y\to+\infty.
\end{equation}
Besides, we need $T_{1},T_{2}$ given by
\begin{equation}
HT_{2}=-T_{1},\ HT_{1}=-\Lambda Q
\end{equation}
with asymptotics
\begin{equation}
T_{1}\sim-4\log y+2,\ T_{2}\sim y^{2}\left(\frac{1}{2}\log y -\frac{5}{8}\right)
\end{equation}
as $y$ tends to infinity.

The linearization of the flow implies a possible approximate solution
\begin{equation}
Q+b_{1}T_{1}+b_{2}T_{2}
\end{equation}
with \textit{a priori} bound $|b_{2}|\lesssim b_{1}^{2}$.
Indeed, the $O(b_{1})$ error vanishes 
\begin{equation}
b_{1}(\Lambda Q+HT_{1})=0
\end{equation}
by the definition of $T_{1}$.
At $O(b_{1}^{2})$ level, the leading term is
\begin{equation}
(b_{1})_{s}T_{1}+b_{1}^{2}\Lambda T_{1}-b_{2}T_{1},
\end{equation}
which can, since $\Lambda T_{1}\sim T_{1}$, be canceled by setting
\begin{equation}
(b_{1})_{s}+b_{1}^{2}-b_{2}=0.
\end{equation}
Similar calculation at $O(b_{1}^{3})$ level suggests that we take
\begin{equation}
(b_{2})_{s}+3b_{1}b_{2}=0.
\end{equation}
Certainly some other terms $S_{j}(b,y)$ are needed to further reduce the
error, and we construct the approximate solution as
\begin{equation}\label{Q-b}
Q_{b}(y)=Q(y)+b_{1}T_{1}+b_{2}T_{2}+S_{2}+S_{3}+S_{4}=Q(y)+\alpha(b,y).
\end{equation}
which generates a small error $\Psi_{b}$. More importantly, from
flux computation, some $\log b_{1}$ term should be introduced to the
dynamical system for $b=(b_{1},b_{2})$. That is, we should take
\begin{equation}
\begin{cases}
\frac{\lambda_{s}}{\lambda}+b_{1}=0,\\
(b_{1})_{s}+b_{1}^{2}(1+c_{b_1})-b_{2}=0, & \text{with}\quad c_{b_1}\sim\frac{2}{|\log b_{1}|}\\
(b_{2})_{s}+b_{1}b_{2}(3+c_{b_1})=0.
\end{cases}\label{roughModeq-1}
\end{equation}
Indeed, this ODE system has a solution with $\lambda\to0^{+}$ at
finite time $T=T(v_{0})<+\infty$ and 
\begin{equation}
\lambda\sim c_{1}(v_{0})\frac{(\log s)^{\frac{4}{9}}}{s^{\frac{2}{3}}},\  b_{1}\sim b_{1}^{e}=\frac{2}{3s}-\frac{4}{9s\log s},\ 
b_{2}\sim b_{2}^{e}=-\frac{2}{9s^{2}}+\frac{20}{27s^{2}\log s}.
\end{equation}
 The problem is that this solution might be unstable. In fact, we define $U$ by
\begin{equation}\label{decomb-1}
b_{k}=b_{k}^{e}+\frac{U_{k}}{s^{k}(\log s)^{\frac{5}{4}}},\quad U=\begin{bmatrix}U_1\\
U_2
\end{bmatrix}.
\end{equation}
Then the dynamical system for $b$ above implies
\begin{equation}
    s\frac{dU}{ds}=AU+\text{error terms,}
\end{equation}
which is, after diagonalization, equivalent to
\begin{equation}
    s\frac{dV}{ds}=D_{A}V+\text{error terms},\quad D_{A}=\begin{bmatrix}
    -1& \\
      & \frac{2}{3}
    \end{bmatrix},\ V=PU
\end{equation}
where the first unstable direction $V_2$ occurs corresponding to the positive eigenvalue $\frac{2}{3}$ of $A$. And it must be controlled \emph{a priori} to avoid disrupting the dynamic for $b$. For technical reasons we modify it to $\tilde{V}_{2}$, see Proposition \ref{prop:improved modulation} and Lemma \ref{lemma：dynamicV'}.

\emph{\uline{Step 2: Decomposition of the flow.}} To get (\ref{roughModeq-1}), we decompose $u$ as
\begin{equation}
u=Q_{b}+\varepsilon
\end{equation}
for some small $\varepsilon(s,y)$ subject to the orthogonal
conditions
\begin{equation}
(\varepsilon,H^{k}\Phi_{M})=0\ \for\ 0\le k\le2.\label{orthoepsilon-1}
\end{equation}
where $(\ ,\ )$ denotes $L^2$ inner product and $\Phi_{M}$ is a compactly supported substitute for $\Lambda Q$ (since $\Lambda Q\notin L^2$).
Then Implicit Function Theorem ensures the existence and uniqueness
of the decomposition as long as $\|\varepsilon\|_{\dot{H}^1}$ remains small.
Moreover, it turns out that (\ref{orthoepsilon-1}) is enough to derive
the expected modulation equations
\begin{equation}
\left|\frac{\lambda_{s}}{\lambda}+b_{1}\right|+
|(b_{1})_{s}+b_{1}^{2}(1+c_{b_1})-b_{2}|+
|(b_{2})_{s}+b_{1}b_{2}(3+c_{b_1})|
\lesssim\|\varepsilon\|_{loc}+\frac{b_{1}^{3}}{|\log b_{1}|}
\end{equation}
for some local-in-space norm $\|\varepsilon\|_{loc}$. %(This is the
%reason we introduce $c_{b_1}$. One can see (\ref{Psi4'}) %and compare
%section 3.1 here with section 3.3 in {[}{]}.)

So we need to bound $\|\varepsilon\|_{loc}$. Thanks
to the Hardy type bounds below ensured by (\ref{orthoepsilon-1})
\begin{equation}
\varXi_{2k}:=\int|H^{k}\varepsilon|^{2}\gtrsim\int\frac{|\varepsilon|^{2}}{(1+y^{4k})(1+|\log y|^{2})},\quad 1\le k\le 3,
\end{equation}
we turn to bound $\varXi_{2k}$. Here comes the second unstable direction, i.e. the projection 
\begin{equation}\label{deftau-1}
\tau(t)=(\varepsilon,\psi).
\end{equation}
Here $\psi$ corresponds to the only non-positive spectrum of $H$
\begin{equation}
H\psi=-\varsigma\psi,\ \varsigma>0.
\end{equation}
If uncontrolled, its growth would destroy any bound of $\varXi_{2k}$.
Assuming suitable \emph{a priori} bound on $\tau$, we can derive some Lyapounov monotonicity
\begin{equation}
\frac{d}{dt}\frac{\varXi_{6}}{\lambda^{10}}
\lesssim\frac{1}{\lambda^{12}}\frac{b_{1}^{6}}{|\log b_{1}|^{2}}
\end{equation}
which leads to the estimate
\begin{equation}
\varXi_{6}\lesssim\frac{b_{1}^{6}}{|\log b_{1}|^{2}}.
\end{equation}
This together with similar bounds on $\varXi_{4}$ and $\varXi_{2}$
serves to control $\|\varepsilon\|_{loc}$. The required smallness of $\|\varepsilon\|_{\dot{H}^{1}}$
is ensured by the dissipation of energy and the sub-coercivity of $H$.

\emph{\uline{Step 3: Control of the two unstable directions.}} With the above analysis, we see the core of the proof is to control
the unstable models $\tilde{V}_{2}$ and $\tau$ \emph{at the same time}. This
is possible since $\tilde{V}_{2}$ comes from the development of modulation parameter $b$ while $\tau$ comes from
the projection of $\varepsilon$ to the non-positive eigenvalue direction of $H$. 
Hence they are (almost) independent of each other as long as both are reasonably small so as not to destroy the bounds for $b$ and $\varepsilon$.

The method is to apply a Brouwer type argument. We set initial data $v_0$ as
\begin{equation}
    v_{0}=Q_{b(0)}+\varepsilon(0)=Q_{b(0)}+\tau(0) \tilde{\psi}
\end{equation}
where $\tilde{\psi}$ satisfies
\begin{equation}
(\tilde{\psi},\psi)=1,\quad (\tilde{\psi},H^{k}\Phi_{M})=0,\quad 0\le k\le2.
\end{equation}
so that (\ref{orthoepsilon-1}) and (\ref{deftau-1}) are satisfied at $t=0$. It can be seen that $v_0$ is uniquely determined by $U_{1}(0)$, $\tilde{V}_{2}(0)$ and $\tau(0)$. We simply take $
U_{1}(0)=0$ and assume \emph{a priori} 
\begin{equation}\label{unstable}
\tilde{V}_{2}(t)\in [-1,1],\quad
\tilde{\tau}(t):=\tau(t)\cdotp\frac{\log b_{1}(t)}{b_{1}(t)^{3+\frac{1}{2}}}\in [-1,1].
\end{equation}
If (\ref{unstable}) fails at $t=\tilde{T}_{exit}(v_{0})<T(v_{0})$
for any reasonable initial data $v_{0}$, we will have a map
\begin{align*}
\mathbb{D}=[-1,1]\times[-1,1] & \to\partial\mathbb{D},\\
(\tilde{V}_{2}(0),\tilde{\tau}(0)) & \mapsto(\tilde{V}_{2}(\tilde{T}_{exit}),\tilde{\tau}(\tilde{T}_{exit})).
\end{align*}
Moreover, the dynamics for $b$ and $\varepsilon$ yield, respectively,
\begin{equation}
|s(\tilde{V}_{2})_{s}-\frac{2}{3}\tilde{V}_{2}|\lesssim{(\log s)^{-\frac{1}{4}}},\ |\tau_{s}-\varsigma\tau|\lesssim\frac{b_{1}^{4}}{|\log b_{1}|},
\end{equation}
which ensure the strictly outgoing behavior 
\begin{equation}
\frac{d}{ds}\tilde{V}_{2}^{2}(\tilde{T}_{exit})>0,\quad  \frac{d}{ds}\tilde{\tau}^{2}(\tilde{T}_{exit})>0.
\end{equation}
Hence classical PDE theory ensures that the map above is continuous
and leaves the boundary points fixed, which contradicts Brouwer fixed-point theorem. So we
conclude that some $v_{0}$ exists such that (\ref{unstable}) holds
for all $t<T(v_{0})$, hence deduce (\ref{roughModeq-1}) and Theorem \ref{thm:main} .

The article is organized as follows. In Section \ref{APPROXIMATE PROFILE}, we first construct the approximate self-similar solution $Q_{b}$, give the sharp estimates about the error term $\Psi_{b}$ and supply a local version of $Q_{b}$. Then we derive the dynamical system of $b=(b_{1},b_{2})$ in Section \ref{Sec:Dynamics for b}, which corresponds to one of the unstable directions. In Section \ref{THE TRAPPED REGIME}, we first give a suitable decomposition of the solution $v(t,x)$ and design the bootstrap regime. Then we derive the modulation equations in Section \ref{Modulation equations}. Finally, in the end of this section, we derive the fundamental monotonicity of the Sobolev-type norms $\varXi_{2},\varXi_{4}$ and $\varXi_{6}$. In Section \ref{CLOSING THE BOOTSTRAP AND PROOF OF THEOREM 1.1}, we first get improved control of $\varXi_{2},\varXi_{4}$ and $\varXi_{6}$. Then by a standard Brouwer argument, we control the two unstable directions at the same time by suitably choosing the initial data, which is the heart of our analysis. The above analysis finishes the bootstrap regime, which easily implies the blow up statement of Theorem \ref{thm:main}.

\section{APPROXIMATE PROFILE}\label{APPROXIMATE PROFILE}

Introduce a parameter $\lambda=\lambda(t)>0$. Let $v(t,\cdotp)=u_{\lambda}(s,\cdotp)$,
or 
\begin{equation}
v(t,r)=\frac{1}{\lambda}u(s,y),\quad y=\frac{r}{\lambda}
\end{equation}
where 
\begin{equation}
s=s_{0}+\int_{0}^{t}\frac{d\tau}{\lambda^{2}(\tau)},\quad s_{0}>0.
\end{equation}
then (\ref{eq1}) becomes 
\begin{equation}
\partial_{s}u-\frac{\lambda_{s}}{\lambda}\Lambda u-\Delta u-u^{3}=0.\label{eq2}
\end{equation}

In this section we construct an explicit approximate solution to (\ref{eq2})
close to $Q$. The construction relies on the spectral properties
of linearized Hamiltonian 
\begin{equation}\label{def:H}
H=-\Delta-V=-\Delta-3Q^{2}
\end{equation}
which are well known and are summarized below:

(i) $H$ has a unique negative eigenvalue 
\begin{equation}
H\psi=-\varsigma\psi,\quad \varsigma>0\label{def:psi}
\end{equation}
and $\psi$ decays exponentially.

(ii) $H$ has a resonance at the origin induced by the scaling symmetry
\begin{equation}
H\Lambda Q=0
\end{equation}
with $\Lambda Q\notin L^{2}$, or more precisely 
\begin{equation}\label{Q}
\for\ 0\le i\le 10,\quad \Lambda^{i}Q=\begin{cases}
1+O(y^{2}) & \text{as}\ y\to0,\\
(-1)^{i}\frac{8}{y^{2}}+O(\frac{1}{y^{4}}) & \text{as}\ y\to+\infty.
\end{cases}
\end{equation}
ODE theory provides another solution to $H\Gamma=0$ for $y>0$:
\begin{equation}
\begin{split}
\Gamma(y)& =-\Lambda Q(y)\int_{1}^{y}\frac{dx}{x^{3}[\Lambda Q(x)]^{2}}\\
 &=\frac{y^{2}-8}{\left(y^{2}+8\right)^{2}}
 \left(\frac{y^{2}}{16}+6 \log y-\frac{583}{112}-\frac{4}{y^{2}}\right)
 -\frac{64}{\left(y^{2}+8\right)^{2}}
 \end{split}
\end{equation}
and the asymptotic behavior 
{
\begin{equation}
\for\ 0\le i\le 10,\quad \Lambda^{i}\Gamma(y)=\begin{cases}
(-1)^i\frac{2}{y^2}+O(|\log y|) & \text{as}\ y\to0,\\
\frac{1}{16}+O(\frac{\log y}{y^{2}}) & \text{as}\ y\to+\infty.
\end{cases}\label{Gamma}
\end{equation}}

(iii) By (ii), the solution to $Hu=f$ is given by 
\begin{equation}\label{Hu=f:sol}
u=H^{-1}f=\Gamma(y)\int_{0}^{y}f(x)\Lambda Q(x)x^3dx-\Lambda Q(y)\int_{0}^{y}f(x)\Gamma(x)x^3dx
\end{equation}
up to the addition of $\alpha Q(y)+\beta\Gamma(y)$. We restrict ourselves to the case where $f$ is smooth and take $\alpha=\beta=0$,
which is equivalent to require $u$ is smooth and $u(0)=0$.

Now we turn to the construction. In the following subsection, we assume
\begin{equation}
\begin{cases}
\frac{\lambda_{s}}{\lambda}+b_{1}=0,\\
(b_{1})_{s}+b_{1}^{2}(1+c_{b_1})-b_{2}=0,\\
(b_{2})_{s}+b_{1}b_{2}(3+c_{b_1})=0.
\end{cases}\label{roughModeq}
\end{equation}
where $c_{b_1}$ is defined by (\ref{c_b}).

\subsection{Construction of the approximate blow-up profile}
\begin{prop}
\label{prop:approximate}(Construction of the approximate profile).
Let $M>0$ be large enough. Then there exists a small enough universal
constant $b^{*}(M)>0$, such that the following holds true. Let there
be a $C^{1}$ map 
\[
b=(b_{k})_{1\leq k\leq2}:[s_{0},s_{1}]\longmapsto(-b^{*}(M),b^{*}(M))\times (-b^{*}(M),b^{*}(M))
\]
with \textit{a priori} bounds on $[s_{0},s_{1}]$ 
\begin{equation}
0<b_{1}<b^{*}(M),\quad |b_{2}|\lesssim b_{1}^{2}.\label{apriori1}
\end{equation}
Then there exist profiles $T_{1},T_{2},S_{2},S_{3}$ and $S_{4}$,
such that 
\begin{equation}
Q_{b}(y)=Q(y)+b_{1}T_{1}+b_{2}T_{2}+S_{2}+S_{3}+S_{4}=Q(y)+\alpha(b,y)
\end{equation}
generates an error 
\begin{equation}
\Psi_{b}:=\partial_{s}Q_{b}-\frac{\lambda_{s}}{\lambda}\Lambda Q_{b}-\Delta Q_{b}-Q_{b}^{3}
\end{equation}
which satisfies:
\begin{align}
\int_{y\leq2B_{1}}|H^{k}\Psi_{b}|^{2}\lesssim b_{1}^{2k+2}|\log b_{1}|^{C}\for \ 1\le k\le 2,\label{Psi1}\\
\int_{y\leq2B_{1}}|H^{3}\Psi_{b}|^{2}\lesssim\frac{b_{1}^{8}}{|\log b_{1}|^{2}},\label{Psi2}\\
\int_{y\leq2B_{1}}\frac{1+|\log y|^{2}}{1+y^{12-2i}}|\partial_{y}^{i}\Psi_{b}|^{2}\lesssim b_{1}^{8}|\log b_{1}|^{C}\for\  0\leq i\leq4,\label{Psi3}\\
\int_{y\leq2M}|H^{k}\Psi_{b}|^{2}\lesssim b_{1}^{10}M^{C}\for\  0\leq k\leq3. \label{Psi4}
\end{align}
\end{prop}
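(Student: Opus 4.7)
The plan is an iterative construction of $S_2, S_3, S_4$ order by order in $b_1$, with $b_2$ counted as $O(b_1^2)$, in the spirit of Rapha\"el--Schweyer. I begin with the algebraic identity
\begin{equation*}
\Psi_b \;=\; \partial_s Q_b + b_1 \Lambda Q_b + H\alpha - 3Q\alpha^2 - \alpha^3, \qquad \alpha := Q_b - Q,
\end{equation*}
which uses $-\Delta Q = Q^3$ and $H = -\Delta - 3Q^2$. Plugging in the ansatz and the rough modulation system \eqref{roughModeq} to eliminate $s$-derivatives, every term becomes a product of an explicit $y$-profile with a monomial in $(b_1, b_2, c_{b_1})$. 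The linear pieces vanish by construction: $b_1(\Lambda Q + H T_1) = 0$ because $HT_1 = -\Lambda Q$, and $b_2(T_1 + HT_2) = 0$ because $HT_2 = -T_1$. What remains groups into right-hand sides $f_2, f_3, f_4$ of respective polynomial homogeneity $b_1^2, b_1^3, b_1^4$.

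Next I would define each correction by solving $H S_k = -f_k$ via the explicit Green's function \eqref{Hu=f:sol}, with the integration constants chosen so that $S_k$ is smooth and vanishes at the origin. The asymptotic behavior of $S_k$ at infinity is controlled by the $\Lambda Q(y)\int_0^y f_k \Gamma\, x^3 dx$ branch and by the $\Gamma(y)\int_0^y f_k \Lambda Q\, x^3 dx$ branch; because $\Lambda Q \sim -8/y^2$ and $T_1, T_2$ carry logarithmic and $y^2\log y$ tails, the inner integrals develop logarithmic corrections. To prevent the second branch from generating a tail that grows like $\Gamma(y)\cdot O(1)$ near the cutoff $y \sim B_1$, a solvability-type relation $\int_0^{2B_1} f_k \Lambda Q\, y^3 dy \approx 0$ must be enforced. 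For the raw $f_k$ it fails by exactly a $1/|\log b_1|$ amount; introducing the modulation correction $c_{b_1} \sim 2/|\log b_1|$ into \eqref{roughModeq} shifts $f_k$ by an amount that cancels this defect, and the matching computation at the scale $B_1 = |\log b_1|/\sqrt{b_1}$ fixes the leading value of $c_{b_1}$.

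Once $S_2, S_3, S_4$ are in place, the residual $\Psi_b$ decomposes into (a) unmatched high-order polynomial terms of homogeneity $\ge 5$ in $b_1$, (b) nonlinear remainders $3Q\alpha^2 + \alpha^3$, and (c) $s$-derivatives $\partial_{b_j} S_k \cdot (b_j)_s$ generated by the modulation flow. Each summand is explicit in $Q, T_1, T_2, S_k$, and their $H$-iterates are controlled via \eqref{Q}, \eqref{Gamma} together with the Green's function formula. The weighted norms \eqref{Psi1}, \eqref{Psi3}, \eqref{Psi4} then reduce to elementary radial integrals of the type $\int_0^{2B_1} y^a (\log y)^b\, dy$ (respectively $\int_0^{2M}$), whose evaluation yields the stated $|\log b_1|^C$ or $M^C$ factors. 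The improvement to $b_1^{10} M^C$ in \eqref{Psi4} comes from the fact that on $y \le 2M$ none of the cutoff errors in $S_2, S_3, S_4$ is active, so every construction cancellation takes effect.

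The main obstacle is the sharpened bound \eqref{Psi2}, $\int_{y \le 2B_1} |H^3 \Psi_b|^2 \lesssim b_1^8 / |\log b_1|^2$, which carries the extra $|\log b_1|^{-2}$ improvement that ultimately drives the blow-up rate \eqref{1.9}. Establishing it requires pushing the flux matching for $f_4$ to the \emph{next} logarithmic order, forcing the precise asymptotic $c_{b_1} \sim 2/|\log b_1|$ rather than any weaker expression, and it requires verifying that $H^3$ annihilates, up to higher-order logarithmic terms, the polynomially growing piece of the residual. All the other bounds are straightforward bookkeeping once $S_2, S_3, S_4$ are in hand, but this sharp estimate is the crux of the construction.
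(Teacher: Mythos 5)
Your overall strategy — iterate in powers of $b_1$, construct $S_2,S_3,S_4$ by inverting $H$, enforce a flux-type solvability condition whose defect is absorbed into the correction $c_{b_1}$ in \eqref{roughModeq} — does match the paper's. But there is a real gap around the mechanism for \eqref{Psi2}, and a couple of misstatements that would trip up the construction if carried out literally.

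The paper does not invert $H$ against the raw right-hand sides $f_k$ and then invoke an approximate solvability condition. It first builds an explicit radiation profile $\varSigma_{b_1}$ solving $H\varSigma_{b_1}=c_{b_1}\chi_{B_0/4}\Lambda Q+d_{b_1}H[(1-\chi_{3B_0})\Lambda Q]$, with the two constants $c_{b_1}$ and $d_{b_1}$ fixed by the two matching conditions at the scale $B_0=1/\sqrt{b_1}$ (not $B_1=|\log b_1|/\sqrt{b_1}$, as you wrote). The combination $\tilde\varSigma_{b_1}:=\varSigma_{b_1}+c_{b_1}T_1$ is then identically zero on $y\le B_0/4$, and on $B_0/4\le y\le 2B_1$ it satisfies $|H^k\tilde\varSigma_{b_1}|\lesssim 1/(|\log b_1|\,y^{2k})$ for $k\ge 2$. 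It is precisely this localization near $B_0$ together with the pointwise $1/|\log b_1|$ smallness that produces the $1/|\log b_1|^2$ in $\int_{y\le 2B_1}|H^3\tilde\varSigma_{b_1}|^2$; the same weight also propagates into $H^3 S_4$ through $\Theta_2=\Lambda T_1-T_1+\varSigma_{b_1}$ (see \eqref{Theta2}). Your phrase "pushing the flux matching for $f_4$ to the next logarithmic order" suggests a different (and insufficient) mechanism — a second-order refinement of $c_{b_1}$ — whereas in fact $c_{b_1}$ is pinned at leading order only, and the gain comes from support and pointwise size, not from a higher-order modulation law.

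A related gap: your categorization of the final residual into (a) unmatched high-order terms, (b) nonlinear remainders, and (c) $\partial_{b_j}S_k\cdot(b_j)_s$ omits what is actually the dominant and hardest contribution, namely the radiation terms $-b_1^2\tilde\varSigma_{b_1}+b_1b_2 H^{-1}\tilde\varSigma_{b_1}$ in the first line of \eqref{defPsib}. These are precisely the terms whose $H^3$ control requires the $\varSigma_{b_1}$-machinery above; if one tries to run the argument without having constructed $\varSigma_{b_1}$ explicitly, one sees only the rough bounds \eqref{roughS2}--\eqref{roughS4}, which yield $\int_{y\le 2B_1}|H^3\Psi_b|^2 \lesssim b_1^8|\log b_1|^C$ rather than $b_1^8/|\log b_1|^2$ — and the logarithmic loss then destroys the sharp blow-up rate downstream. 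So while your plan identifies the right crux, it does not yet contain the idea that closes it.

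Finally, two small notes: the integral-growth claim "a tail that grows like $\Gamma(y)\cdot O(1)$" is off — $\Gamma$ is bounded, and the troublesome growth from a constant source term is actually $O(y^2)$ coming from \emph{both} branches of \eqref{Hu=f:sol}, uncancelled; and the paper also needs the second parameter $d_{b_1}$ to control $\varSigma_{b_1}$ outside $6B_0$, which your single solvability-relation framing does not produce.
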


\begin{proof}
\emph{\uline{Step 1: Computation of the error.}}\emph{ }Take $T_{1},T_{2}$
to be the solution to 
\begin{equation}
HT_{1}+\Lambda Q=0,\ HT_{2}+T_{1}=0,
\end{equation}
given by (\ref{Hu=f:sol}). Then as $y\to+\infty$, we obtain 
\begin{align}
\text{for}\ 0\le i\le 10,\quad \Lambda^{i}T_{1}&=-4\log y+2-4i+O\left(\frac{|\log y|^{2}}{y^{2}}\right),\label{T1-est}\\
\Lambda^{i}T_{2}&=O(y^{2}\log y).\label{T2-est}
\end{align}
There holds the behavior at $y\to 0$
\begin{align}
    \text{for}\ 0\leq i\leq 10,\quad \Lambda^i T_1=O(y^2),\quad \Lambda^i T_2=O(y^4).
\end{align}
Note that $T_1,T_2$ are independent of $b_1$ and $b_2$. In the following we shall find $S_i$ of order $b_1^i$ for $i=2,3,4$. We expand $Q_b^3$ and rearrange them to the polynomial of $b_1$:
\begin{align}
    Q_b^3=Q^3+3Q^2 \alpha + R_2+R_3+R_4+R
\end{align}
where $R$ is a polynomial of $Q,T_{i},S_{j}$ with $O(b_{1}^{5})$
coefficients and 
\begin{equation}\label{R234}
\begin{cases}
R_{2}  :=3b_{1}^{2}QT_{1}^{2}\\
R_{3}  :=6b_{1}b_{2}T_{1}T_{2}+6b_{1}QT_{1}S_{2}+b_{1}^{3}T_{1}^{3},\\
R_{4}  :=6b_{1}QT_{1}S_{3}+3b_{2}^{2}QT_{2}^{2}+6b_{2}QT_{2}S_{2}+3QS_{2}^{2}+3b_{1}^{2}b_{2}T_{1}^{2}T_{2}+3b_{1}^{2}T_{1}^{2}S_{2}.
\end{cases}
\end{equation}
Now a direct computation leads to 
\begin{equation}
\begin{split}
\Psi_{b} ={} & b_{1}\Lambda Q+[(b_{1})_{s}T_{1}+b_{1}^{2}\Lambda T_{1}+b_{1}HT_{1}]+[(b_{2})_{s}T_{2}+b_{1}b_{2}\Lambda T_{2}+b_{2}HT_{2}]\\
& +\sum_{j=2}^{4}[\partial_{s}S_{j}+b_{1}\Lambda S_{j}+HS_{j}]-(Q_{b}^{3}-Q^{3}-3Q^{2}\alpha) \\
={} & b_{1}^{2}[\Lambda T_{1}-(1+c_{b_1})T_{1}]+HS_{2}-R_2 \\
& b_{1}b_{2}[\Lambda T_{2}-(3+c_{b_1})T_{2}]+HS_{3}+(\partial_{s}S_{2}+b_{1}\Lambda S_{2})-R_3 \\
& + HS_{4}+(\partial_{s}S_{3}+b_{1}\Lambda S_{3})-R_4\\        &+(\partial_{s}S_{4}+b_{1}\Lambda S_{4})-R.
\end{split}
\end{equation}

\emph{\uline{Step 2: Construction of the radiation $\varSigma_{b_1}$.}} We introduce a radiation term to cancel
the 1-growth in $\Lambda T_{1}-T_{1}$ (the $\log y$ growth vanishes
by (\ref{T1-est})). Let 
\begin{equation}
c_{b_1}=\frac{64}{\int\chi_{\frac{B_{0}}{4}}(\Lambda Q)^{2}}=\frac{2}{|\log b_{1}|}\left(1+O\left(\frac{1}{|\log b_{1}|}\right)\right),\label{c_b}
\end{equation}
\begin{equation}
d_{b_1}=c_{b_1}\int\chi_{\frac{B_{0}}{4}}\Gamma\Lambda Q=O\left(\frac{1}{b_{1}|\log b_{1}|}\right).
\end{equation}
Let $\varSigma_{b_1}$ solves 
\begin{equation}
H\varSigma_{b_1}=c_{b_1}\chi_{\frac{B_{0}}{4}}\Lambda Q+d_{b_1}H[(1-\chi_{3B_{0}})\Lambda Q],\label{defSigmab}
\end{equation}
that is, 
\begin{equation}
\varSigma_{b_1}=c_{b_1}\Gamma\int_{0}^{y}\chi_{\frac{B_{0}}{4}}(\Lambda Q)^{2}-c_{b_1}\Lambda Q\int_{0}^{y}\chi_{\frac{B_{0}}{4}}\Gamma\Lambda Q+d_{b_1}(1-\chi_{3B_{0}})\Lambda Q.
\end{equation}
The choice of $c_{b_1}$ and $d_{b_1}$ yields 
\begin{equation}
\text{for}\ 0\le i\le 10,\quad  \Lambda^{i}\varSigma_{b_1}=\begin{cases}
-c_{b_1}\Lambda^{i}T_{1} & \text{for}\ y\leq\frac{B_{0}}{4},\\
64\Lambda^{i}\Gamma & \text{for}\ y\geq6B_{0},
\end{cases}\label{Sigmab1}
\end{equation}
and for $\frac{B_{0}}{4}\leq y\leq6B_{0}$,
{
\begin{align}
\begin{split}
\varSigma_{b_1}=&\ c_{b_1}\left(\frac{1}{16}+O(\frac{\log y}{y^{2}})\right)\int_{0}^{y}\chi_{\frac{B_{0}}{4}}(\Lambda Q)^{2}+c_{b_1}O(\frac{1}{y^{2}})O(y^{2})\\
=&\ 4\frac{\int_0^y \chi_{B_0/4}(\Lambda Q)^2}{\int \chi_{B_0/4}(\Lambda Q)^2}+O\left(\frac{1}{|\log b_{1}|}\right),
\end{split}\label{Sigmab2}\\[1em]
\begin{split}
    \Lambda \varSigma_{b_1}=&\ c_{b_1}\Gamma\int_{0}^{y}\chi_{\frac{B_{0}}{4}}(\Lambda Q)^{2}-c_{b_1}\Lambda Q\int_{0}^{y}\chi_{\frac{B_{0}}{4}}\Gamma\Lambda Q+d_{b_1}\Lambda[(1-\chi_{3B_{0}})\Lambda Q].\\ =&\ c_{b_1}\left(\frac{1}{16}+O(\frac{\log y}{y^{2}})\right)\int_{0}^{y}\chi_{\frac{B_{0}}{4}}(\Lambda Q)^{2}+c_{b_1}O(\frac{1}{y^{2}})O(y^{2})\\
=&\ 4\frac{\int_0^y \chi_{B_0/4}(\Lambda Q)^2}{\int \chi_{B_0/4}(\Lambda Q)^2}+O\left(\frac{1}{|\log b_{1}|}\right).
\end{split}
\end{align}
Similarly one can establish,
\begin{align}
    \text{for}\ 0\leq i\leq 10,\quad \Lambda^{i} \varSigma_{b_1}=4\frac{\int_0^y \chi_{B_0/4}(\Lambda Q)^2}{\int \chi_{B_0/4}(\Lambda Q)^2}+O\left(\frac{1}{|\log b_{1}|}\right).
\end{align}
}
We need also to bound $b_{1}^{j}\partial_{b_{1}}^{j}\Lambda^{i}\varSigma_{b_1}$.
Simple calculation reveals that
\[
\text{for}\ 1\leq j\leq 10,\quad  |b_{1}^{j}\partial_{b_{1}}^{j}\chi_{\frac{B_{0}}{4}}|\lesssim\1_{\frac{B_{0}}{4}\leq y\leq\frac{B_{0}}{2}},\quad |b_{1}^{j}\partial_{b_{1}}^{j}\chi_{3B_{0}}|\lesssim\1_{3B_0\leq y\leq 6B_0}.
\]
Since $\Lambda Q$ is independent of $b_1$, then 
\begin{align*}
\left|b_{1}^{j}\partial_{b_{1}}^{j}\int_{0}^{y}\chi_{\frac{B_{0}}{4}}(\Lambda Q)^{2}\right| & \lesssim\int_{\frac{B_{0}}{4}}^{\frac{B_{0}}{2}}(\Lambda Q)^{2}\cdotp\1_{y\geq\frac{B_{0}}{4}}\lesssim\1_{y\geq\frac{B_{0}}{4}},\\[1em]
\left|b_{1}^{j}\partial_{b_{1}}^{j}\int_{0}^{y}\chi_{\frac{B_{0}}{4}}\Gamma\Lambda Q\right| & \lesssim\int_{\frac{B_{0}}{4}}^{\frac{B_{0}}{2}}|\Gamma\Lambda Q|\cdotp\1_{y\geq\frac{B_{0}}{4}} \lesssim\frac{1}{b_{1}}\1_{y\geq\frac{B_{0}}{4}}
\end{align*}
which in particular yield
\[
\text{for}\ 1\leq j\leq 10,\quad \left|b_1^j\partial_{b_1}^jc_{b_1}\right|\lesssim \frac{1}{|\log b_1|^2},\quad |b_1^j\partial_{b_1}^jd_{b_1}|\lesssim \frac{1}{b_1|\log b_1|}.
\]
Taking these into the definition of $\varSigma_{b_1}$, we conclude 
\begin{align}
\text{for}\ 0\leq i\leq 10,\ 1\le j\le 10, \quad |b_{1}^{j}\partial_{b_{1}}^{j}\Lambda^{i}\varSigma_{b_1}| & \lesssim\frac{1}{|\log b_{1}|}\1_{y\leq6B_{0}}.\label{Sigmab3}
\end{align}

\emph{\uline{Step 3: Construction of $S_{2}$.}} Let 
\begin{equation}
\Theta_{2}:=\Lambda T_{1}-T_{1}+\varSigma_{b_1},\label{defTheta2}
\end{equation}
then by (\ref{T1-est}), (\ref{Gamma}), (\ref{Sigmab1}) and (\ref{Sigmab2}), we obtain 
\begin{gather*}
\Lambda^{i}\Theta_{2} = O\left(\frac{|\log y|^{2}}{y^{2}}\right) \for\ y\geq6B_{0},\\[1ex]
\Lambda^{i}\Theta_{2} = O(1) \for\ y\leq\sqrt{B_{0}},\\[1ex]
\Lambda^{i}\Theta_{2} = 4\frac{\int_0^y \chi_{B_0/4}(\Lambda Q)^2}{\int \chi_{B_0/4}(\Lambda Q)^2}-4+O\left(\frac{1}{|\log b_{1}|}\right)+O\left(\frac{|\log y|^{2}}{y^{2}}\right) \for\ \sqrt{B_{0}}\leq y\leq6B_{0}.
\end{gather*}
Combining the above estimates, we have 
\begin{equation}
\text{for}\ 0\leq i\leq 10,\quad  |\Lambda^{i}\Theta_{2}|\lesssim\1_{y\leq1}+\left(\frac{1+|\log \sqrt{b_{1}}y|}{|\log b_{1}|}\right)\1_{1\leq y\leq6B_{0}}+\frac{|\log y|^{2}}{y^{2}}\1_{y\geq6B_{0}}.\label{Theta2-1}
\end{equation}
where we used 
\begin{align*}
\frac{|\log y|^{2}}{y^{2}}\lesssim1\lesssim\frac{|\log \sqrt{b_{1}}y|}{|\log b_{1}|}\for\ 1\leq y\leq\sqrt{B_{0}},\\
\frac{|\log y|^{2}}{y^{2}}\lesssim {\sqrt{b_{1}}}{|\log b_{1}|^{2}}\lesssim\frac{1}{|\log b_{1}|}\for\ \sqrt{B_{0}}\leq y\leq6B_{0},&&.
\end{align*}
Since $T_1$ is independent of $b_1$, then $b_{1}^{j}\partial_{b_{1}}^{j}\Lambda^{i}\Theta_{2}=b_{1}^{j}\partial_{b_{1}}^{j}\Lambda^{i}\varSigma_{b_1}$. Together with (\ref{Sigmab3}) we get 
\begin{equation}
|b_{1}^{j}\partial_{b_{1}}^{j}\Lambda^{i}\Theta_{2}|\lesssim\1_{y\leq1}+\left(\frac{1+|\log \sqrt{b_{1}}y|}{|\log b_{1}|}\right)\1_{1\leq y\leq6B_{0}}+\frac{|\log y|^{2}}{y^{2}}\1_{y\geq6B_{0}}\label{Theta2}
\end{equation}
for $0\leq i,j\leq 10.$

% Recall the definition of $R_{2}=3b_1^2QT_1^2$, it is easy to show 
% \begin{equation}
% \forall\, i,j\ge0,\quad |b_{1}^{j}\partial_{b_{1}}^{j}\Lambda^{i}R_{2}|\lesssim b_{1}^{2}(\1_{y\leq1}+\frac{1+|\log y|^{C}}{y^{2}}\1_{y\geq1}).\label{R2}
% \end{equation}

Now let $S_{2}$ be the solution to 
\begin{equation}
b_{1}^{2}\Theta_{2}+HS_{2}-R_{2}=0.\label{defS2}
\end{equation}
{ Recall that  $R_{2}=3b_1^2QT_1^2$. Since $H$ commutes with multiplication of $b_1$, then $S_2=b_1^2\tilde S_2$ where $\tilde S_2$ solves $H\tilde S_2=3QT_1^2-\Theta_2$. The following claim follows from simple calculus.

\begin{claim} Suppose $H^{-1}f$ defined in \eqref{Hu=f:sol}. One has 
\begin{align}
    H^{-1}\Theta_2\lesssim&\ \1_{y\leq 1}+y^2\left(\frac{1+|\log \sqrt{b_{1}}y|}{|\log b_{1}|}\right)\1_{1\leq y\leq6B_{0}}+\frac{1}{b_1|\log b_1|}\1_{y\geq 6B_0},\\[1em]
    H^{-1}[QT_1^2]\lesssim&\ H^{-1}[\1_{y\leq 1}+y^{-2}|\log y|^2\1_{y\geq 1}]\lesssim \1_{y\geq 1}+|\log y|^3\1_{y\geq 1}.
\end{align}
\end{claim}
% Consider $H^{-1}f$ defined in \eqref{Hu=f:sol}. Since $\Gamma'(x)\Lambda Q(x)-(\Lambda Q(x))'\Gamma(x)=-x^{-3}$, using integration by parts, we have
% \[H^{-1}{f}(y)=\Lambda Q(y)\int_0^y\frac{1}{s^3[\Lambda Q(s)]^2}\left(\int_0^s f(x)\Lambda Q(x)x^3dx \right)ds.\]
Above estimates for $\Theta_{2}$ and $R_{2}$ imply 
\begin{equation}
|\tilde S_{2}|\lesssim \1_{y\leq1}+\frac{1}{b_{1}|\log b_{1}|}\1_{y\geq1}\for\ y\le2B_{1}.
\end{equation}
and the rough bound 
\begin{equation}
|\tilde S_{2}|\lesssim 1+y^{2},\for\ y\le2B_{1}.
\end{equation}}
In general, taking $\Lambda^{i}$
operation on (\ref{defS2}) and applying the commutator 
\begin{equation}
[H,\Lambda]=H\Lambda-\Lambda H=2H+(V+\Lambda V),\label{commutator}
\end{equation}
we obtain inductively similar estimates of $\Lambda^i S_2$ for $1\leq i\leq 10$. Also, taking $\partial_{b_{1}}$ operation and (since it commutes with $H$) using (\ref{Theta2}),
% Since $b_{1}^{j}\partial_{b_{1}}^{j}\Lambda^{i}S_{2}= b_1^2\Lambda^i\tilde S_2$, 
then one has 
\begin{align}
\text{for}\ 0\leq i,j\leq 10,\ y\leq2B_{1},\quad  |b_{1}^{j}\partial_{b_{1}}^{j}\Lambda^{i}S_{2}| & \lesssim b_{1}^{2}(\1_{y\leq1}+\frac{1}{b_{1}|\log b_{1}|}\1_{y\geq1}),\label{S2}\\
|b_{1}^{j}\partial_{b_{1}}^{j}\Lambda^{i}S_{2}| & \lesssim b_{1}^{2}(1+y^{2}).\label{roughS2}
\end{align}
Note that $\partial_{b_{2}}S_{2}=0$.

\emph{\uline{Step 4: Construction of $S_{3}$.}} 
Here we use $H^{-1}\varSigma_{b_1}$ to cancel the leading order growth in $\Lambda T_{2}-3T_{2}$. Define 
\begin{equation}
\Theta_{3}:=\Lambda T_{2}-3T_{2}-H^{-1}\varSigma_{b_1}.
\end{equation}
then using (\ref{commutator}) and $HT_{2}=-T_{1}$ we get
\begin{equation}
H\Theta_{3}=-\Lambda T_{1}-2T_{1}+(V+\Lambda V)T_{2}+3T_{1}-\varSigma_{b_1}=-\Theta_{2}+(V+\Lambda V)T_{2},\label{HTheta3}
\end{equation}
hence with bounds (\ref{Theta2}) and (\ref{T2-est}) we derive 
\begin{equation}
\text{for}\ 0\leq i,j,k\leq 10,\ y\leq2B_{1},\quad  |b_{1}^{j+2k}\partial_{b_{1}}^{j}\partial_{b_{2}}^{k}\Lambda^{i}\Theta_{3}|\lesssim\1_{y\leq1}+\frac{1}{b_{1}|\log b_{1}|}\1_{y\geq1}.\label{Theta3}
\end{equation}
Next turn to $R_{3}$. From (\ref{T1-est}), \eqref{T2-est}, (\ref{roughS2}) and \textit{a priori} bound
$|b_{2}|\lesssim b_{1}^{2}$, we get 
\begin{equation}
\text{for}\  0\leq i,j,k\leq 10,\ y\leq2B_{1},\quad  |b_{1}^{j+2k}\partial_{b_{1}}^{j}\partial_{b_{2}}^{k}\Lambda^{i}\ensuremath{R_{3}}|\lesssim b_{1}^{3}(\1_{y\leq1}+|\log y|^{C}\1_{y\geq1})\label{R3}
\end{equation}
Let $S_{3}$ be the solution to 
\begin{equation}
b_{1}b_{2}\Theta_{3}+HS_{3}+[-b_{1}^{2}(1+c_{b_1})+b_{2}]\partial_{b_{1}}S_{2}+b_{1}\Lambda S_{2}-R_{3}=0,\label{defS3}
\end{equation}
then estimates (\ref{Theta3}), (\ref{R3}) and (\ref{S2}) yield 
\begin{align}
\text{for} \ 0\leq i,j,k\leq10,\  y\leq2B_{1},\quad   |b_{1}^{j+2k}\partial_{b_{1}}^{j}\partial_{b_{2}}^{k}\Lambda^{i}S_{3}| & \lesssim b_{1}^{3}\left(\1_{y\leq1}+\frac{y^{2}}{b_{1}|\log b_{1}|}\1_{y\geq1}\right),\label{S3}\\
|b_{1}^{j+2k}\partial_{b_{1}}^{j}\partial_{b_{2}}^{k}\Lambda^{i}S_{3}| & \lesssim b_{1}^{3}(1+y^{4}).\label{roughS3}
\end{align}

\emph{\uline{Step 5: Construction of $S_{4}$.}} 
From (\ref{roughS2}), (\ref{roughS3}), we get 
\begin{equation}
\text{for}\ 0\leq i,j,k\leq 10,\ y\leq2B_{1},\quad |b_{1}^{j+2k}\partial_{b_{1}}^{j}\partial_{b_{2}}^{k}\Lambda^{i}R_{4}|\lesssim b_{1}^{4}(\1_{y\leq1}+y^{2}|\log y|^{C}\1_{y\geq1}).\label{R4}
\end{equation}
Let $S_{4}$ be the solution to 
\begin{equation}
HS_{4}+[-b_{1}^{2}(1+c_{b_1})+b_{2}]\partial_{b_{1}}S_{3}+[-b_{1}b_{2}(3+c_{b_1})]\partial_{b_{2}}S_{3}+b_{1}\Lambda S_{3}-R_{4}=0,\label{defS4}
\end{equation}
then similar to Step 4, we have 
\begin{align}
\text{for}\ 0\leq i,j,k\leq10,\ y\leq2B_{1},\quad  |b_{1}^{j+2k}\partial_{b_{1}}^{j}\partial_{b_{2}}^{k}\Lambda^{i}S_{4}| & \lesssim b_{1}^{4}\left(\1_{y\leq1}+\frac{y^{4}}{b_{1}|\log b_{1}|}\1_{y\geq1}\right),\label{S4}\\
|b_{1}^{j+2k}\partial_{b_{1}}^{j}\partial_{b_{2}}^{k}\Lambda^{i}S_{4}| & \lesssim b_{1}^{4}(1+y^{6}).\label{roughS4}
\end{align}

\emph{\uline{Step 6: Estimation of the error.}} According to our
constructions above and assumption (\ref{roughModeq}), we get

\begin{align}\label{defPsib}
\begin{split}
\Psi_{b}=& -b_{1}^{2}(\varSigma_{b_1}+c_{b_1}T_{1})+b_{1}b_{2}H^{-1}(\varSigma_{b_1}+c_{b_1}T_{1})\\
 & +[-b_{1}^{2}(1+c_{b_1})+b_{2}]\partial_{b_{1}}S_{4}+[-b_{1}b_{2}(3+c_{b_1})]\partial_{b_{2}}S_{4}+b_{1}\Lambda S_{4}\\
 & -R.
 \end{split}
\end{align}
We split it into three parts and estimate as follows:

(i) The first line: Write $\tilde{\varSigma}_{b_1}:=\varSigma_{b_1}+c_{b_1}T_{1}$.
Note that $\tilde{\varSigma}_{b_1}=0$ for $y\leq\frac{B_{0}}{4}$.

By (\ref{defSigmab}), we estimate for $\frac{B_{0}}{4}\le y\leq2B_{1}$
\begin{gather*}
|H^{k}\tilde{\varSigma}_{b_1}|\lesssim\frac{1}{|\log b_{1}|y^{2k}}\for \  1\leq k\leq3,\\
|\partial_{y}^{i}\tilde{\varSigma}_{b_1}|\lesssim y^{-i},\quad  |\partial_{y}^{i}H^{-1}\tilde{\varSigma}_{b_1}|\lesssim y^{2-i}\for\  0\leq i\leq10,
\end{gather*}
hence we conclude 
\begin{gather}
\int_{y\leq2B_{1}}|H^{k}\tilde{\varSigma}_{b_1}|^{2}\lesssim\int_{\frac{B_{0}}{4}\le y\le2B_{1}}\frac{1}{y^{4k}}\lesssim b_{1}^{2k-2}|\log b_{1}|^{C}\for\ k=-1,0,1,\label{(i)}\\
\int_{y\leq2B_{1}}|H^{k}\tilde{\varSigma}_{b_1}|^{2}\lesssim\frac{1}{|\log b_{1}|^{2}}\int_{\frac{B_{0}}{4}\le y\le2B_{1}}\frac{1}{y^{4k}}\lesssim\frac{b_{1}^{2k-2}}{|\log b_{1}|^{2}}\for\ k=2,3.
\end{gather}

(ii) The second line: By the rough bound (\ref{roughS4}), we have
\begin{equation}
\begin{split}
&\int_{y\leq2B_{1}}|H^{k}\Lambda S_{4}|^{2}+|H^{k}(b_{1}\partial_{b_{1}}S_{4})|^{2}+|H^{k}(b_{1}^{2}\partial_{b_{2}}S_{4})|^{2}\\
\lesssim{} & b_{1}^{8}\int_{y\leq2B_{1}}(1+y^{6-2k})^{2}\lesssim b_{1}^{2k}|\log b_{1}|^{C}\for\  k=1,2.
\end{split}
\end{equation}
The crucial $H^{3}$ level bound requires more effort. Apply operator
$H$ twice to (\ref{defS4}) and use (\ref{commutator}) , then we
find 
\begin{align*}
H^{3}S_{4} & =O(b_{1})(H^{2}S_{3}+\Lambda H^{2}S_{3}+b_{1}\partial_{b_{1}}H^{2}S_{3}+b_{1}^{2}\partial_{b_{2}}H^{2}S_{3})+O(b_{1}^{4})(\1_{y\leq1}+\frac{|\log y|^{C}}{y^{2}}\1_{y\geq1}).
\end{align*}
Again, apply $H$ to (\ref{defS3}) and use (\ref{commutator}) 
\[
H^{2}S_{3}=O(b_{1}^{3})(\Theta_{2}+\Lambda\Theta_{2}+b_{1}\partial_{b_{1}}\Theta_{2})+O(b_{1}^{4})\left(\1_{y\leq1}+\frac{|\log y|^{C}}{y^{2}}\1_{y\geq1}\right).
\]
Now the bound (\ref{Theta2}) for $\Theta_{2}$ implies 
\[
|H^{3}S_{4}|\lesssim b_{1}^{4}\left(\1_{y\leq1}+\left(\frac{1+|\log \sqrt{b_{1}}y|}{|\log b_{1}|}\right)\1_{1\leq y\leq6B_{0}}+\frac{|\log y|^{C}}{y^{2}}\1_{y\geq6B_{0}}\right)
\]
thus 
\begin{align*}
\int_{y\leq2B_{1}}|H^{3}S_{4}|^{2} & \lesssim b_{1}^{8}\left(\int_{y\leq1}1+\int_{1\leq y\leq6B_{0}}\frac{(1+|\log \sqrt{b_{1}}y|)^{2}}{|\log b_{1}|^{2}}+\int_{6B_{0}\leq y\leq2B_{1}}\frac{|\log y|^{C}}{y^{4}}\right)\\
&\lesssim\frac{b_{1}^{6}}{|\log b_{1}|^{2}}.
\end{align*}
Similarly we have 
\begin{equation}
\int_{y\leq2B_{1}}|H^{3}\Lambda S_{4}|^{2}+|H^{3}(b_{1}\partial_{b_{1}}S_{4})|^{2}+|H^{3}(b_{1}^{2}\partial_{b_{2}}S_{4})|^{2}\lesssim\frac{b_{1}^{6}}{|\log b_{1}|^{2}}.
\end{equation}

(iii) The third line: From rough bounds (\ref{roughS2}), (\ref{roughS3}), (\ref{roughS4})
for $S_{j}$ we derive
\begin{align*}
|\partial_{y}^{i}R| & \lesssim b_{1}^{5}\1_{y\leq1}+\sum_{j=5}^{12}b_{1}^{j}y^{2j-6-i}(1+|\log y|^{C})\1_{y\geq1}\\
&\lesssim b_{1}^{5}(\1_{y\leq1}+y^{4-i}|\log b_{1}|^{C}\1_{y\geq1})\ \text{for}\ y\leq2B_{1}.
\end{align*}
Hence 
\begin{align}
\begin{split}
\int_{y\leq2B_{1}}|H^{k}R|^{2}&\lesssim b_{1}^{10}\int_{y\leq1}1+b_{1}^{10}|\log b_{1}|^{C}\int_{1\leq y\leq2B_{1}}y^{8-4k}\\[1ex]
&\lesssim b_{1}^{2k+4}|\log b_{1}|^{C}\ \text{for}\ 1\leq k\leq3.\label{(iii)}
\end{split}
\end{align}

The bounds (\ref{(i)})-(\ref{(iii)}) together yield (\ref{Psi1}) and (\ref{Psi2}).
(\ref{Psi3}) can be proved in similar way. (\ref{Psi4}) follows
from $\tilde{\varSigma}_{b_1}=0$ for $y\le2M\le B_{0}/4$. 
\end{proof}

\subsection{Localization}

The approximate solution $Q_{b}$ we constructed above stays close
to $Q$ only in the parabolic zone $y\leq2B_{1}$, and certain localization
is needed to avoid the growth at infinity. 
\begin{prop}
(Localization) Under the assumptions of Proposition $\text{\ref{prop:approximate}}$,
assume further that 
\begin{equation}
|(b_{1})_{s}|\lesssim b_{1}^{2}.\label{apriori2}
\end{equation}
Let $\tilde{Q}_{b}:=Q+\tilde{\alpha}$, $\tilde{\alpha}:=\chi_{B_{1}}\alpha.$
Then 
\begin{equation}
\partial_{s}\tilde{Q}_{b}-\frac{\lambda_{s}}{\lambda}\Lambda\tilde{Q}_{b}-\Delta\tilde{Q}_{b}-\tilde{Q}_{b}^{3}=\tilde{\Psi}_{b}+\Mod,\label{Qb'eq}
\end{equation}
where 
\begin{equation}
\begin{split}
\Mod:=&-\left(\frac{\lambda_{s}}{\lambda}+b_{1}\right)\Lambda\tilde{Q}_{b}+[(b_{1})_{s}+b_{1}^{2}(1+c_{b_1})-b_{2}]\left[\tilde{T}_{1}+\chi_{B_{1}}\sum_{j=2}^{4}\frac{\partial S_{j}}{\partial b_{1}}\right]\\
&+[(b_{2})_{s}+b_{1}b_{2}(3+c_{b_1})]\left[\tilde{T}_{2}+\chi_{B_{1}}\sum_{j=3}^{4}\frac{\partial S_{j}}{\partial b_{2}}\right]\label{defMod}
\end{split}
\end{equation}
and $\tilde{\Psi}_{b}$ satisfies 
\begin{gather}
\int|H^{k}\tilde{\Psi}_{b}|^{2}\lesssim b_{1}^{2k+2}|\log b_{1}|^{C}\ \for\ k=1,2,\label{Psi1'}\\
\int|H^{3}\tilde{\Psi}_{b}|^{2}\lesssim\frac{b_{1}^{8}}{|\log b_{1}|^{2}},\label{Psi2'}\\
\int\frac{1+|\log y|^{2}}{1+y^{12-2i}}|\partial_{y}^{i}\tilde{\Psi}_{b}|^{2}\lesssim b_{1}^{8}|\log b_{1}|^{C}\ \for\ 0\leq i\leq4,\label{Psi3'}\\
\int_{y\leq2M}|H^{k}\tilde{\Psi}_{b}|^{2}\lesssim b_{1}^{10}M^{C}\ \for\ 0\leq k\leq3.\label{Psi4'}
\end{gather}
\end{prop}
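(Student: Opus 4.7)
The plan is to compute $\tilde{\Psi}_b$ directly from its defining relation via the Leibniz rule on $\tilde{Q}_b = Q + \chi_{B_1}\alpha$, and to estimate it region by region. The first step is to observe that $\Mod$ in \eqref{defMod} is designed exactly to absorb the $s$-derivatives $(b_1)_s$, $(b_2)_s$, $\lambda_s/\lambda$ produced by $\partial_s\tilde Q_b$, replacing each by its formal ODE value from \eqref{roughModeq}; on the flat part of $\chi_{B_1}$ the resulting remainder therefore coincides with $\Psi_b$ of Proposition \ref{prop:approximate}. I would then split space into the interior $\{y\le B_1\}$, the transition annulus $\{B_1\le y\le 2B_1\}$, and the exterior $\{y\ge 2B_1\}$.

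In the interior the cutoff is trivial, so $\tilde{\Psi}_b=\Psi_b$ and bounds \eqref{Psi1}--\eqref{Psi4} immediately yield the interior parts of \eqref{Psi1'}--\eqref{Psi4'}. In the exterior $\tilde Q_b=Q$ and $\Delta Q+Q^3=0$ collapse the expression to $\tilde{\Psi}_b=b_1\Lambda Q$. Although $\Lambda Q\notin L^2$, the resonance identity $H\Lambda Q=0$ annihilates this tail under every positive power of $H$, so it contributes nothing to \eqref{Psi1'} or \eqref{Psi2'}; the pointwise decay $|\partial_y^i\Lambda Q|\lesssim y^{-2-i}$ at infinity makes the exterior contribution to \eqref{Psi3'} negligible since $B_1\gtrsim b_1^{-1/2}|\log b_1|$.

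The substance of the argument is confined to the transition annulus. There, the Leibniz rule produces commutator terms $(\partial_s\chi_{B_1})\alpha$, $2\nabla\chi_{B_1}\cdot\nabla\alpha$, $\alpha\Delta\chi_{B_1}$, together with the cubic defect
\begin{equation*}
(Q+\chi_{B_1}\alpha)^3 - Q^3 - \chi_{B_1}\bigl[(Q+\alpha)^3 - Q^3\bigr] = 3Q\alpha^2\chi_{B_1}(\chi_{B_1}-1) + \alpha^3\chi_{B_1}(\chi_{B_1}^2-1),
\end{equation*}
each supported in $B_1\le y\le 2B_1$. Every derivative of $\chi_{B_1}$ gains a factor $B_1^{-1}\sim\sqrt{b_1}/|\log b_1|$, and the a priori bound \eqref{apriori2} gives $|(\chi_{B_1})_s|\lesssim b_1$. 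Combined with the pointwise bounds on $\alpha$ and its derivatives from Proposition \ref{prop:approximate} (giving $|\partial_y^i\alpha|\lesssim b_1|\log b_1|^C$ at $y\sim B_1$) and with the annulus volume $\sim B_1^4$, the commutator contributions to $\int|H^k\tilde{\Psi}_b|^2$ are controlled by $b_1^{2k+2}|\log b_1|^C$, matching \eqref{Psi1'}, \eqref{Psi3'}, and \eqref{Psi4'}.

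The main obstacle I expect is the sharp $H^3$ bound \eqref{Psi2'} with its $|\log b_1|^{-2}$ gain. This gain is produced in Proposition \ref{prop:approximate} only by the radiation $\tilde{\varSigma}_{b_1}=\varSigma_{b_1}+c_{b_1}T_{1}$ via the sharp estimates established in Step 6 of its proof, so the difficulty is ensuring that no $\chi_{B_1}$-commutator destroys it. I would handle this by isolating the $\tilde{\varSigma}_{b_1}$-contribution to $\Psi_b$ and tracking the sharper bounds \eqref{Sigmab1}--\eqref{Sigmab3} (which already carry $|\log b_1|^{-1}$ per application of $H$) through the annulus commutators, relying on the fact that $\nabla\chi_{B_1}$ is supported at $y\sim B_1\gg B_0/4$ where $\tilde{\varSigma}_{b_1}$ is already of controlled size.
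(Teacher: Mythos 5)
Your decomposition into interior/annulus/exterior, the observation that $\tilde{\Psi}_b=\Psi_b$ in the interior and $\tilde{\Psi}_b=b_1\Lambda Q$ in the exterior (annihilated by positive powers of $H$), and the identification of the cutoff-commutators and cubic defect in the annulus reproduce the paper's proof exactly. Two points in your annulus analysis need fixing before the argument closes, though.

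First, the pointwise bound $|\partial_y^i\alpha|\lesssim b_1|\log b_1|^C$ at $y\sim B_1$, stated uniformly in $i$, is insufficient. With no decay in $i$, the dominant contribution to $H^k(2\partial_y\alpha\,\partial_y\chi_{B_1})$ puts all the derivatives on $\alpha$, giving only $|H^k(\cdot)|\lesssim b_1|\log b_1|^C B_1^{-1}$ and hence $\int_{B_1\le y\le2B_1}|\cdot|^2\sim b_1^2|\log b_1|^{2C}B_1^{-2}\cdot B_1^4\sim b_1|\log b_1|^{2C+2}$, far larger than $b_1^{2k+2}$. You need the derivative decay $|\partial_y^i\alpha|\lesssim b_1^2y^{2-i}\log y$ on $B_1\le y\le2B_1$ (a consequence of the rough bounds \eqref{roughS2}--\eqref{roughS4}); the factor $y^{-i}\sim B_1^{-i}$, parallel to the $B_1^{-j}$ gained per derivative of $\chi_{B_1}$, is what supplies the extra powers of $b_1$ that make the annulus contribution to $\int|H^k\tilde{\Psi}_b|^2$ of order $b_1^4|\log b_1|^2\,B_1^{-(4k-4)}\lesssim b_1^{2k+2}|\log b_1|^C$.

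Second, the $|\log b_1|^{-2}$ in \eqref{Psi2'} does not require you to carry $\tilde{\varSigma}_{b_1}$ through the cutoff commutators. Once the $\alpha$-derivative decay is in place, those commutators are automatically of order $b_1^8/|\log b_1|^6$, simply because $B_1^{-8}\sim b_1^4/|\log b_1|^8$; the scale $B_1=|\log b_1|/\sqrt{b_1}$ (rather than $B_0$) already delivers the gain with room to spare. The binding term for \eqref{Psi2'} is $\chi_{B_1}\Psi_b$: its interior part is \eqref{Psi2} itself, and its annulus part needs the pointwise estimate $|\partial_y^i\Psi_b|\lesssim b_1^4y^{4-i}/|\log b_1|$ on $B_1\le y\le2B_1$, which does trace back to the $S_4$ bound \eqref{S4} and ultimately to the radiation -- so your instinct about $\tilde{\varSigma}_{b_1}$ is right for that one term, not for the genuine commutators. (Also, \eqref{Psi4'} lives on $y\le2M\ll B_1$, so it is untouched by the cutoff and inherits directly from \eqref{Psi4}; it should not be grouped with the annulus estimates.)
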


\begin{proof}
From localization we compute 
\begin{align*}
\tilde{\Psi}_{b}=&\ \chi_{B_{1}}\Psi_{b}+b_{1}(1-\chi_{B_{1}})\Lambda Q+\alpha\partial_{s}\chi_{B_{1}}-\alpha\Delta\chi_{B_{1}}-2\partial_{y}\alpha\partial_{y}\chi_{B_{1}}\\
&\ -[(Q+\chi_{B_{1}}\alpha)^{3}-Q^{3}]+\chi_{B_{1}}[(Q+\alpha)^{3}-Q^{3}].
\end{align*}
We estimate the $H^{3}$ level bound (\ref{Psi2'}) term by term,
as follows.

(i) Start with $\chi_{B_{1}}\Psi_{b}$. From (\ref{defPsib}) we estimate
for $0\leq i\leq6,\ B_{1}\le y\leq2B_{1}$ 
\[
|\partial_{y}^{i}\Psi_{b}|\lesssim b_{1}^{2}y^{-i}+b_{1}^{3}y^{2-i}+\frac{b_{1}^{4}y^{4-i}}{|\log b_{1}|}+b_{1}^{5}y^{4-i}|\log b_{1}|^{C}\lesssim\frac{b_{1}^{4}y^{4-i}}{|\log b_{1}|}.
\]
Together with (\ref{Psi2}) we get 
\[
\int|H^{3}(\chi_{B_{1}}\Psi_{b})|^{2}\lesssim\int_{y\leq B_{1}}|H^{3}\Psi_{b}|^{2}+\frac{b_{1}^{8}}{|\log b_{1}|^{2}}\int_{B_{1}\leq y\leq2B_{1}}\frac{1}{y^{4}}\lesssim\frac{b_{1}^{8}}{|\log b_{1}|^{2}}.
\]

(ii) The second term is easily controlled by 
\[
\int\left|H^{3}[(1-\chi_{B_{1}})\Lambda Q]\right|^{2}\lesssim\int_{y\ge B_{1}}\frac{1}{y^{16}}\lesssim\frac{b_{1}^{6}}{|\log b_{1}|^{2}}.
\]

(iii) For the third term $\alpha\partial_{s}\chi_{B_{1}}$, with \textit{a priori} bound $|(b_{1})_{s}|\lesssim b_{1}^{2}$, we have
\[
0\leq i\leq 10,\quad |\partial_{y}^{i}\partial_{s}\chi_{B_{1}}|=|(b_{1})_{s}\partial_{b_{1}}\partial_{y}^{i}\chi_{B_{1}}|\lesssim\frac{b_{1}}{y^{i}}\1_{B_{1}\leq y\leq2B_{1}},
\]
and our calculations in the previous subsection imply 
\[
|\partial_{y}^{i}\alpha|\lesssim b_{1}^{2}y^{2-i}\log y\ \text{for}\ B_{1}\le y\leq2B_{1}.
\]
So we have 
\[
\int|H^{3}(\alpha\partial_{s}\chi_{B_{1}})|^{2}\lesssim b_{1}^{4}\int_{B_{1}\leq y\leq2B_{1}}\frac{|\log y|^{2}}{y^{8}}\lesssim\frac{b_{1}^{8}}{|\log b_{1}|^{2}}.
\]

(iv) The next two terms are bounded by 
\[
\int|H^{3}(\alpha\Delta\chi_{B_{1}}+2\partial_{y}\alpha\partial_{y}\chi_{B_{1}})|^{2}\lesssim b_{1}^{4}\int_{B_{1}\leq y\leq2B_{1}}\frac{|\log y|^{2}}{y^{8}}\lesssim\frac{b_{1}^{8}}{|\log b_{1}|^{2}}.
\]

(iv) Finally, note that 
\[
-[(Q+\chi_{B_{1}}\alpha)^{3}-Q^{3}]+\chi_{B_{1}}[(Q+\alpha)^{3}-Q^{3}]=3(\chi_{B_{1}}-\chi_{B_{1}}^{2})Q\alpha^{2}+(\chi_{B_{1}}-\chi_{B_{1}}^{3})\alpha^{3},
\]
and 
\begin{align*}
&\int\left|H^{3}[3(\chi_{B_{1}}-\chi_{B_{1}}^{2})Q\alpha^{2}+(\chi_{B_{1}}-\chi_{B_{1}}^{3})\alpha^{3}]\right|^{2}\\
\lesssim{}&\int_{B_{1}\leq y\leq2B_{1}}\left|\frac{1}{y^{6}}[b_{1}^{4}y^{2}(\log y)^{2}+b_{1}^{6}y^{6}(\log y)^{3}]\right|^{2}\\
\lesssim{}& b_{1}^{10}|\log b_{1}|^{C}\lesssim\frac{b_{1}^{8}}{|\log b_{1}|^{2}}.
\end{align*}
This concludes the proof of (\ref{Psi2'}). Proofs for the other three
are similar(and simpler). 
\end{proof}

\subsection{Dynamical system for $b=(b_{1},b_{2})$}
\label{Sec:Dynamics for b}
In the first subsection, we have seen the importance of the (modulation)
assumption (\ref{roughModeq}). Thus $b=(b_{1,}b_{2})$ should approximately
satisfies
\[
(b_{k})_{s}+(2k-1+\frac{2}{\log s})b_{1}b_{k}-b_{k+1}=0,\quad k=1,2,\quad b_{3}=0.
\]
Indeed, this equation has an approximate solution 
\begin{equation}\label{def:b^e}
b_{1}^{e}=\frac{2}{3s}-\frac{4}{9s\log s},\quad b_{2}^{e}=-\frac{2}{9s^{2}}+\frac{20}{27s^{2}\log s},\quad b_{3}^{e}=0
\end{equation}
in the sense that 
\begin{equation}
(b_{k}^{e})_{s}+\left(2k-1+\frac{2}{\log s}\right)b_{1}^{e}b_{k}^{e}-b_{k+1}^{e}=O\left(\frac{1}{s^{k+1}(\log s)^{2}}\right),\quad k=1,2.
\end{equation}
The proof is by direct calculation. Now we look for $b$ near this
approximate solution. 
\begin{prop}\label{prop:dynamics for b}Let 
\begin{equation}
b_{k}=b_{k}^{e}+\frac{U_{k}}{s^{k}(\log s)^{\frac{5}{4}}},\quad k=1,2,\quad b_{3}=0\label{decomb}
\end{equation}
and $U=\begin{bmatrix}U_{1}\\
U_{2}
\end{bmatrix}$, $b_{k}^{e}$ in \eqref{def:b^e}. Then we have 
\begin{equation}
\begin{split}
&(b_{k})_{s}+\left(2k-1+\frac{2}{\log s}\right)b_{1}b_{k}-b_{k+1}\\
={}&\frac{1}{s^{k+1}(\log s)^{\frac{5}{4}}}\left[s(U_{k})_{s}-(AU)_{k}+O\left(\frac{1}{\sqrt{\log s}}+\frac{|U|+|U|^{2}}{\log s}\right)\right]\label{dynamicb}
\end{split}
\end{equation}
where $A=\begin{bmatrix}-\frac{1}{3} & 1\\[1ex]
\frac{2}{3} & 0
\end{bmatrix}=P^{-1}D_{A}P$ with $P=\begin{bmatrix}1 & -1\\
2 & 3
\end{bmatrix},\ D_{A}=\begin{bmatrix}-1\\
 & \frac{2}{3}
\end{bmatrix}$. 
\end{prop}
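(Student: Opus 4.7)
The plan is a Taylor expansion of the left-hand side of \eqref{dynamicb} about the reference trajectory $b^e$. Setting $\varepsilon_k := U_k/(s^k(\log s)^{5/4})$ with the convention $\varepsilon_3 = 0$ (since $b_3 = b_3^e = 0$), the bilinear product expands as $b_1 b_k = b_1^e b_k^e + b_1^e \varepsilon_k + b_k^e \varepsilon_1 + \varepsilon_1 \varepsilon_k$, so the LHS of \eqref{dynamicb} splits as $I_k + L_k + Q_k$ with
\begin{align*}
I_k &= (b_k^e)_s + \bigl(2k-1+\tfrac{2}{\log s}\bigr) b_1^e b_k^e - b_{k+1}^e, \\
L_k &= (\varepsilon_k)_s + \bigl(2k-1+\tfrac{2}{\log s}\bigr)(b_1^e \varepsilon_k + b_k^e \varepsilon_1) - \varepsilon_{k+1}, \\
Q_k &= \bigl(2k-1+\tfrac{2}{\log s}\bigr) \varepsilon_1 \varepsilon_k.
\end{align*}
The reference piece $I_k$ is $O(1/(s^{k+1}(\log s)^2))$ by the approximate-solution identity for $b^e$ stated just above the proposition, so after dividing by the common scale $1/(s^{k+1}(\log s)^{5/4})$ it contributes $O(1/(\log s)^{3/4}) \subset O(1/\sqrt{\log s})$ to the bracket.

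For the linear piece, $(\varepsilon_k)_s$ produces the main derivative $(U_k)_s/(s^k(\log s)^{5/4})$, a drift $-kU_k/(s^{k+1}(\log s)^{5/4})$, and a smaller remainder $O(|U|/(s^{k+1}(\log s)^{9/4}))$ that lands in the $|U|/\log s$ error bucket. Using $b_1^e = 2/(3s) + O(1/(s\log s))$ and $b_k^e = c_k/s^k + O(1/(s^k \log s))$ with $c_1 = 2/3$, $c_2 = -2/9$, each cross term $b_1^e \varepsilon_k$ and $b_k^e \varepsilon_1$ contributes a leading piece at scale $1/(s^{k+1}(\log s)^{5/4})$ plus a relative $O(1/\log s)$ correction; the $(2/\log s)$ prefactor applied to any linear piece is itself absorbed in the error. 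Collecting the $(U_1, U_2)$ coefficients after factoring the common scale gives, for $k=1$: $-1 + 2 \cdot (2/3) = 1/3$ of $U_1$ and $-1$ of $U_2$ (from $-\varepsilon_2$); for $k=2$: $3 \cdot (-2/9) = -2/3$ of $U_1$ and $-2 + 3 \cdot (2/3) = 0$ of $U_2$ (using $\varepsilon_3 = 0$). These are exactly the entries of $-A$, so $L_k = [s(U_k)_s - (AU)_k]/(s^{k+1}(\log s)^{5/4})$ modulo the admissible errors.

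The quadratic piece $Q_k$ is manifestly $O(|U|^2/(s^{k+1}(\log s)^{5/2}))$, contributing $O(|U|^2/\log s)$ after normalization. The diagonalization of $A$ is routine: the characteristic polynomial $\lambda^2 + \lambda/3 - 2/3 = (\lambda+1)(\lambda - 2/3)$ yields eigenvalues $-1$ and $2/3$, and solving $(A-\lambda I)v = 0$ recovers the columns of $P^{-1}$ (equivalently the rows of $P$ as stated).

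The proposition is essentially a computation, so the main obstacle is not analytic but book-keeping: one must confirm that every cross term between the $O(1/\log s)$ corrections to $b_k^e$ and the factors $\varepsilon_j$, as well as every piece produced by the $2/\log s$ multiplier, genuinely lands in one of the three buckets $1/\sqrt{\log s}$, $|U|/\log s$, $|U|^2/\log s$. This is where the \emph{a priori} bound $|U| \lesssim$ (something to be fixed in the bootstrap) will later be used to convert ``$|U|/\log s$'' into a usable small parameter; at the level of the proposition itself only the algebraic identification of $L_k$ with $(s U_s - AU)_k$ carries content, and the rest is arithmetic.
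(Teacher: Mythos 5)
Your proposal is correct and follows essentially the same route as the paper, which simply declares "direct computation yields" and writes down the intermediate identities for $k=1,2$; your decomposition of the left side into reference, linear, and quadratic pieces $I_k + L_k + Q_k$ and the explicit coefficient matching (yielding the entries of $-A$ on both rows, the $O((\log s)^{-3/4})$ from $I_k$ absorbed into $O(1/\sqrt{\log s})$, and the $O(|U|^2/(\log s)^{5/4})$ from $Q_k$ absorbed into $O(|U|^2/\log s)$) is an accurate and somewhat more explicit rendering of the same arithmetic, with the eigenvector computation also verifying $A = P^{-1}D_A P$ as claimed.
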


\begin{proof}
In fact, direct computation yields 
\begin{equation}
\begin{split}
& (b_{1})_{s}+(1+\frac{2}{\log s})b_{1}^{2}-b_{2}\\
={}&\frac{1}{s^{2}(\log s)^{\frac{5}{4}}}\left[s(U_{1})_{s}-U_{1}+O(\frac{|U|}{\log s})\right]\\
&+\frac{4}{3}U_{1}-U_{2}+O\left(\frac{|U|+|U|^{2}}{\log s}\right)+O\left(\frac{1}{s^{2}|\log s|^{2}}\right)
\end{split}
\end{equation}
and 
\begin{equation}
\begin{split}
&(b_{2})_{s}+(3+\frac{2}{\log s})b_{1}b_{2}\\
={}&\frac{1}{s^{3}(\log s)^{\frac{5}{4}}}\left[s(U_{2})_{s}-2U_{2}+O(\frac{|U|}{\log s})\right]\\
& +3\left(-\frac{2}{9}U_{1}+\frac{2}{3}U_{2}\right)+O\left(\frac{|U|+|U|^{2}}{\log s}\right)+O\left(\frac{1}{s^{3}|\log s|^{2}}\right).
\end{split}
\end{equation}
which can be arranged to (\ref{dynamicb}). The diagonalization of
$A$ is simple linear algebra. 
\end{proof}

\section{THE TRAPPED REGIME}
\label{THE TRAPPED REGIME}
From now on, we assume that the initial data $v_{0}\in\dot{H}^{1}\cap \dot{H}^{6}$.
From standard local well posedness theory, (\ref{eq1}) has a solution
$v\in C([0,T),\dot{H}^{1}\cap \dot{H}^{6})$ with lifetime $T=T(v_{0})\le+\infty$.

In this section we describe our choice of initial data and design
a bootstrap regime to control the behavior of the corresponding solution.
Our main analysis is on the Lyapounov monotonicity in subsection 3.3.

\subsection{Modulation theory}

We first try to decompose the solution as 
\begin{equation}
v(t,\cdotp)=(\tilde{Q}_{b(t)}+\varepsilon)_{\lambda(t)},\quad \text{or}\ u=\tilde{Q}_{b}+\varepsilon.\label{decom}
\end{equation}
where $\varepsilon(s,y)$ satisfies the orthogonality conditions 
\begin{equation}
(\varepsilon,H^{k}\Phi_{M})=0\ \text{for}\ 0\le k\le2.\label{orthoepsilon}
\end{equation}
Here $\Phi_{M}$ is a substitute for $\Lambda Q$ supported on $y\le2M$
(this is necessary since $\Lambda Q\notin L^{2}$), as described below.

Given $M>0$ large enough, define 
\begin{equation}
\Phi_{M}:=\chi_{M}\Lambda Q+c_{M,1}H(\chi_{M}\Lambda Q)+c_{M,2}H^{2}(\chi_{M}\Lambda Q)\label{defPhi}
\end{equation}
where 
\[
c_{M,1}=\frac{(\chi_{M}\Lambda Q,T_{1})}{(\chi_{M}\Lambda Q,\Lambda Q)}=O(M^{2}),\ c_{M,2}=\frac{-(\chi_{M}\Lambda Q,T_{2})+c_{M,1}(\chi_{M}\Lambda Q,T_{1})}{(\chi_{M}\Lambda Q,\Lambda Q)}=O(M^{4})
\]
are chosen to ensure the cancellation 
\begin{equation}
(\Phi_{M},T_{i})=0,\quad i=1,2\label{orthoPhi}
\end{equation}
and non-degeneracy 
\begin{equation}
(\Phi_{M},\Lambda Q)=(\chi_{M}\Lambda Q,\Lambda Q)=64\log M(1+o_{M\to+\infty}(1)),
\end{equation}
\begin{equation}
||\Phi_{M}||_{L^{2}}^{2}\lesssim\int|\chi_{M}\Lambda Q|^{2}+c_{M,1}^{2}\int|H(\chi_{M}\Lambda Q)|^{2}+c_{M,2}^{2}\int|H^{2}(\chi_{M}\Lambda Q)|^{2}\lesssim \log M.
\end{equation}
Now at the point $\lambda=1,\ b=(b_{1},b_{2})=(0,0)$, we have 
\[
(\partial_{\lambda}(\tilde{Q}_{b})_{\lambda},\partial_{b_{1}}(\tilde{Q}_{b})_{\lambda},\partial_{b_{2}}(\tilde{Q}_{b})_{\lambda})=(\Lambda Q,T_{1},T_{2}),
\]
hence the non-degeneracy of Jacobian 
% \begin{align*}
% \begin{vmatrix}(\partial_{\lambda}(\tilde{Q}_{b})_{\lambda},\Phi_{M}) & (\partial_{b_{1}}(\tilde{Q}_{b})_{\lambda},\Phi_{M}) & (\partial_{b_{2}}(\tilde{Q}_{b})_{\lambda},\Phi_{M})\\
% (\partial_{\lambda}(\tilde{Q}_{b})_{\lambda},H\Phi_{M}) & (\partial_{b_{1}}(\tilde{Q}_{b})_{\lambda},H\Phi_{M}) & (\partial_{b_{2}}(\tilde{Q}_{b})_{\lambda},H\Phi_{M})\\
% (\partial_{\lambda}(\tilde{Q}_{b})_{\lambda},H^{2}\Phi_{M}) & (\partial_{b_{1}}(\tilde{Q}_{b})_{\lambda},H^{2}\Phi_{M}) & (\partial_{b_{2}}(\tilde{Q}_{b})_{\lambda},H^{2}\Phi_{M})
% \end{vmatrix} & =\begin{vmatrix}(\Lambda Q,\Phi_{M}) & 0 & 0\\
% 0 & -(\Lambda Q,\Phi_{M}) & 0\\
% 0 & 0 & (\Lambda Q,\Phi_{M})
% \end{vmatrix}\neq0,
% \end{align*}
\begin{align*}
\left|\left(\frac{\partial}{\partial(\lambda,b_j)}(\tilde Q_b)_\lambda,H^i\Phi_M\right)_{\substack{1\leq j\leq 2,\\0\leq i\leq 2}}\right|_{\lambda=1,b=0}& =\begin{vmatrix}(\Lambda Q,\Phi_{M}) & 0 & 0\\
0 & -(\Lambda Q,\Phi_{M}) & 0\\
0 & 0 & (\Lambda Q,\Phi_{M})
\end{vmatrix}\neq0,
\end{align*}
Now we introduce some notations:

(i) The energy norm 
\begin{equation}
\varXi_{1}(t):=\int|\partial_{y}\varepsilon|^{2}
\end{equation}
and higher Sobolev norms 
\begin{equation}
\varXi_{2k}(t):=\int|\varepsilon_{2k}|^{2},\quad 1\leq k\leq3\ \text{with}\ \varepsilon_{2k}:=H^{k}\varepsilon.
\end{equation}

(ii) The unstable models 
\begin{equation}
V(t)=\begin{bmatrix}V_{1}(t)\\
V_{2}(t)
\end{bmatrix}=PU,\quad  \text{where }P,U\ \text{are from Proposition \ref{prop:dynamics for b}}.
\end{equation}
\begin{equation}
\tau(t)=(\varepsilon(t),\psi),\quad\text{ where }\psi \text{ is from }\eqref{def:psi}.\label{deftau}
\end{equation}

With these preparations, we turn to the construction of initial data.
Set $v_{0}$ in the decomposition form (\ref{decom}) as
\begin{equation}
v_{0}=\tilde{Q}_{b(0)}+\tau(0)\tilde{\psi}
\end{equation}
where $\tilde{\psi}$ satisfies
\begin{equation}
(\tilde{\psi},\psi)=1,\quad (\tilde{\psi},H^{k}\Phi_{M})=0,\quad 0\le k\le2.
\end{equation}
This way, the orthogonal conditions (\ref{orthoepsilon}) are automatically satisfied at $t=0$.

Besides, $b(0)$ is determined by $V(0)$ (or $U(0)$) through (\ref{decomb})
\begin{equation}
b_{k}=b_{k}^{e}+\frac{U_{k}}{s^{k}(\log s)^{\frac{5}{4}}},\quad k=1,2.
\end{equation}
We fix $U_{1}(0)=0$, hence 
\begin{equation}
b_{1}(0)=\frac{2}{3s_{0}}-\frac{4}{9s_{0}}.
\end{equation}
Choose $s_{0}$ large enough and $V(0),\tau(0)$ properly so that
the following bounds hold:

(i) Initial smallness: 
\begin{equation}
0<b_{1}(0)<b^{*}(M)\ll1,\ \varXi_{1}(0)\leq b_{1}(0),\label{initial1}
\end{equation}
\begin{equation}
\varXi_{2}(0)+\varXi_{4}(0)+\varXi_{6}(0)\leq b_{1}(0)^{7}.\label{initial2}
\end{equation}

(ii) Control of the unstable models: 
\begin{equation}
|V_{1}(0)|\leq1,\quad |V_{2}(0)|\leq1,\quad |\tau(0)|\leq\frac{b_{1}(0)^{3+\frac{1}{2}}}{|\log b_{1}(0)|}\label{initial3}
\end{equation}

(iii) Without loss of generality we assume 
\begin{equation}
\lambda(0)=1.
\end{equation}
By Implicit Function Theorem, (\ref{initial1}) ensures that the decomposition
(\ref{decom}) exists and is unique near $t=0$. Moreover, $\lambda,b\in C^{1}$.

Given another large enough universal constant $K>0$, independent
of $M$, the continuity of the flow implies the following proposition.
\begin{prop}
\label{prop:bootstrap} (Bootstrap) There exists some maximal time
$T_{exit}\in[0,T(v_{0})]$, called exit time, such that for all $t\in[0,T_{exit})$
the following bounds hold:

(i) Control of $\varepsilon$: 
\begin{gather}
\varXi_{1}(t)\leq10\sqrt{b_{1}(0)},\label{exit1}\\
\varXi_{2}(t)\leq b_{1}^{\frac{4}{3}}(t)|\log b_{1}(t)|^{K},\\
\varXi_{4}(t)\leq b_{1}^{4}(t)|\log b_{1}(t)|^{K},\\
\varXi_{6}(t)\leq K\frac{b_{1}^{6}(t)}{|\log b_{1}(t)|^{2}}.\label{exit2}
\end{gather}

(ii) Control of unstable models:
\begin{equation}
|V_{1}(t)|\leq2,\quad |V_{2}(t)|\leq2,\quad |\tau(t)|\leq\frac{b_{1}(t)^{3+\frac{1}{2}}}{|\log b_{1}(t)|}.\label{exit3}
\end{equation}
\end{prop}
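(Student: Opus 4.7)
The plan is to argue by a standard continuity (bootstrap) setup, in which Proposition~\ref{prop:bootstrap} itself is essentially the \emph{opening} of the bootstrap: the real work of showing $T_{exit}=T(v_0)$ is postponed to later sections. First I would verify that the initial data constructed above actually satisfies each of the bounds \eqref{exit1}--\eqref{exit3} with \emph{strict} inequality at $t=0$. This is immediate from the initial conditions: for $b_1(0)$ chosen sufficiently small, \eqref{initial1}--\eqref{initial2} give $\varXi_1(0)\leq b_1(0)\ll 10\sqrt{b_1(0)}$ and $\varXi_{2k}(0)\leq b_1(0)^7$, which is much smaller than the corresponding right-hand sides in \eqref{exit1}--\eqref{exit2}; and \eqref{initial3} gives the strict inequalities for $V_1(0),V_2(0),\tau(0)$ in \eqref{exit3}.

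Next I would promote the initial decomposition \eqref{decom} to a $C^1$ family on a short time interval. The non-degeneracy of the Jacobian computed just before \eqref{deftau} guarantees, by the Implicit Function Theorem, that for $v$ close enough to $(\tilde Q_{b(0)})_{\lambda(0)}$ in $\dot H^1$ there exist unique modulation parameters $(\lambda(t),b(t))$ and remainder $\varepsilon(t,\cdot)$ satisfying \eqref{orthoepsilon}. Local well-posedness of \eqref{eq1} in $\dot H^1\cap \dot H^6$, together with the smoothness of the profile $\tilde Q_b$ in $b$ (Proposition~\ref{prop:approximate}), then shows that $\lambda,b\in C^1$ and $\varepsilon\in C(\dot H^1\cap \dot H^6)$ on some positive time interval. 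In particular the quantities $\varXi_{2k}(t)$, $V(t)=PU(t)$, and $\tau(t)$ are continuous functions of $t$.

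With these two ingredients, I would define
\[
T_{exit}:=\sup\bigl\{\,t\in[0,T(v_0))\ :\ \text{all of \eqref{exit1}--\eqref{exit3} hold on }[0,t]\,\bigr\}.
\]
By the strict initial bounds and continuity of the quantities involved, $T_{exit}>0$. By construction all the bounds hold on $[0,T_{exit})$, and this interval is the maximal one on which the decomposition persists together with the bootstrap assumptions. The choice of the large universal constant $K$ (independent of $M$) is used only to ensure that the right-hand sides in \eqref{exit1}--\eqref{exit2} dominate the corresponding initial values; no dynamical information about the flow is yet needed.

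There is no real obstacle at this stage: the proposition is purely definitional, and its content is a setup rather than an estimate. The genuine difficulty, which is deferred to Sections~\ref{Modulation equations} and~\ref{CLOSING THE BOOTSTRAP AND PROOF OF THEOREM 1.1}, will be to prove that $T_{exit}=T(v_0)$ by \emph{improving} each of the bounds in \eqref{exit1}--\eqref{exit3}; this is where the modulation equations, the Lyapounov monotonicity for the $\varXi_{2k}$, and the Brouwer argument for the two unstable directions $\tilde V_2$ and $\tilde\tau$ all come in. In particular, the factor $10$ in \eqref{exit1}, the power $|\log b_1|^K$ in \eqref{exit2}, and the factor $2$ in \eqref{exit3} are the slack that the improved bounds will have to beat.
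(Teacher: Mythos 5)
Your proposal is correct and matches the paper's own treatment, which dispenses with a proof entirely and simply invokes ``the continuity of the flow'' — your argument is precisely the intended fleshing-out of that remark (local decomposition via the Implicit Function Theorem, continuity of $\varXi_{2k}$, $V$, $\tau$, supremum definition of $T_{exit}$). One small caveat worth noting: the initial bound \eqref{initial3} on $\tau(0)$ coincides exactly with \eqref{exit3}, so the inequality for $\tau$ need not be \emph{strict} at $t=0$ (indeed, the Brouwer map later sends boundary data of $\mathbb{D}$ to $\partial\mathbb{D}$, which can make $T_{exit}=0$); this does not affect the proposition as stated, since $T_{exit}\in[0,T(v_0)]$ permits $T_{exit}=0$, but your line ``$T_{exit}>0$'' should be read only for initial data in the interior of $\mathbb{D}$.
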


\begin{rem}
Bound (\ref{exit3}) on $V$ ensures $|b_{2}|\lesssim b_{1}^{2}$,
so (\ref{apriori1}) holds. 
\end{rem}

We now describe bootstrap regime. First, use the control of unstable
models to improve the bounds (\ref{exit1})-(\ref{exit2}). Next,
for $V$ and $\tau$, our primary observation is that the two unstable
directions are in some sense independent of each other, hence a Brouwer
argument works to provide some initial data with $T_{exit}=T(v_{0})$.
The proof of Theorem \ref{thm:main} follows from  Proposition \ref{prop:bootstrap} easily.

In the next two subsections we assume $t\in[0,T_{exit})$ and deduce
some key tools to close the bootstrap.

\subsection{Modulation equations}
\label{Modulation equations}
Bring (\ref{decom}) and (\ref{Qb'eq}) into (\ref{eq2}), we find
\begin{equation}
\partial_{s}\varepsilon-\frac{\lambda_{s}}{\lambda}\Lambda\varepsilon+H\varepsilon=\mathscr{F}=-\tilde{\Psi}_{b}-\Mod+L(\varepsilon)+N(\varepsilon)\label{dynamicepsilon}
\end{equation}
where 
\begin{equation}
L(\varepsilon)=3(\tilde{Q}_{b}-Q^{2})\varepsilon,\quad N(\varepsilon)=3\tilde{Q}_{b}\cdotp\varepsilon^{2}+\varepsilon^{3}.
\end{equation}
We now derive the modulation equations for $b,\lambda$ as a consequence
of the orthogonality conditions (\ref{orthoepsilon}). 
\begin{prop}
(Modulation equations) We have the bounds on the modulation parameters
\begin{equation}
\left|\frac{\lambda_{s}}{\lambda}+b_{1}\right|+|(b_{1})_{s}+b_{1}^{2}(1+c_{b_1})-b_{2}|\lesssim b_{1}^{3+\frac{1}{2}},\label{Modeq1}
\end{equation}
\begin{equation}
|(b_{2})_{s}+b_{1}b_{2}(3+c_{b_1})|\lesssim\frac{1}{\sqrt{\log M}}\left(\sqrt{\varXi_{6}}+\frac{b_{1}^{3}}{|\log b_{1}|}\right),\label{Modeq2}
\end{equation}
with constants independent of $M$ and $K$, as long as the $b^{*}(M)$
in \eqref{apriori1} is small enough. 
\end{prop}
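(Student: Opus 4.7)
The plan is to differentiate the three orthogonality conditions (\ref{orthoepsilon}) in $s$ and feed in the evolution equation (\ref{dynamicepsilon}). Since $\Phi_M$ does not depend on the modulation parameters, this yields, for each $k=0,1,2$,
\[
(\Mod, H^k\Phi_M) = \frac{\lambda_s}{\lambda}(\Lambda\varepsilon, H^k\Phi_M) - (\varepsilon, H^{k+1}\Phi_M) - (\tilde\Psi_b, H^k\Phi_M) + (L(\varepsilon)+N(\varepsilon), H^k\Phi_M),
\]
where I have used the self-adjointness of $H$ and the orthogonality $(\varepsilon, H^j\Phi_M)=0$ for $j\le 2$, so the second term vanishes for $k=0,1$. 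The surviving $k=2$ term is $(\varepsilon, H^3\Phi_M) = (H^3\varepsilon, \Phi_M)$, controlled by Cauchy--Schwarz against $\sqrt{\varXi_6}\,\|\Phi_M\|_{L^2}\lesssim \sqrt{\varXi_6\log M}$; this is precisely the origin of the $\sqrt{\varXi_6}/\sqrt{\log M}$ factor in (\ref{Modeq2}).

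Next I would extract the principal part of the left-hand side. Write $\mathcal A_0:=-(\lambda_s/\lambda+b_1)$, $\mathcal A_1:=(b_1)_s+b_1^2(1+c_{b_1})-b_2$, $\mathcal A_2:=(b_2)_s+b_1b_2(3+c_{b_1})$, so that expression (\ref{defMod}) realizes $\Mod$ as a linear combination of $\Lambda\tilde Q_b$, $\tilde T_1+\chi_{B_1}\sum\partial_{b_1}S_j$, $\tilde T_2+\chi_{B_1}\sum\partial_{b_2}S_j$ with coefficients $\mathcal A_0,\mathcal A_1,\mathcal A_2$. Pairing with $H^k\Phi_M$ and invoking $HT_1=-\Lambda Q$, $HT_2=-T_1$ together with the designed cancellations $(\Phi_M,T_1)=(\Phi_M,T_2)=0$ from (\ref{orthoPhi}), the resulting $3\times 3$ coefficient matrix for $(\mathcal A_0,\mathcal A_1,\mathcal A_2)$ is diagonal up to lower-order corrections, with diagonal entries $(-1)^k(\Lambda Q,\Phi_M)=(-1)^k\cdot 64\log M\,(1+o_{M\to\infty}(1))$. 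The off-diagonal perturbations from $\tilde T_i-T_i$, the $\chi_{B_1}\partial_{b_j}S_\ell$ terms, and $\Lambda\tilde Q_b-\Lambda Q$ are $O(b_1(\log M)^C)$ and are absorbed perturbatively.

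For the right-hand side I would estimate $|(\tilde\Psi_b,H^k\Phi_M)|\lesssim b_1^5 M^C$ from (\ref{Psi4'}) and the compact support $y\le 2M$ of $H^k\Phi_M$; bound $(L(\varepsilon)+N(\varepsilon),H^k\Phi_M)$ by $(\log M)^C b_1 \sqrt{\varXi_{2k}}$ using the pointwise smallness $\tilde Q_b^2-Q^2=O(b_1(\log M)^C)$ on $\mathrm{supp}\,\Phi_M$ together with Hardy-type bounds ensured by (\ref{orthoepsilon}); and handle $(\Lambda\varepsilon,H^k\Phi_M)$ by integration by parts against $\Lambda^*H^k\Phi_M$, which is of the same type. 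Dividing the system by $(\Lambda Q,\Phi_M)\asymp \log M$ and inverting the nearly diagonal matrix yields (\ref{Modeq1}) from the $k=0,1$ rows (both errors ending up dominated by $b_1^5 M^C/\log M + b_1^4 \ll b_1^{3+1/2}$ once $b^*(M)$ is small enough) and (\ref{Modeq2}) from the $k=2$ row, whose leading error is exactly $\sqrt{\varXi_6\log M}/\log M=\sqrt{\varXi_6}/\sqrt{\log M}$ plus the $b_1^3/|\log b_1|$ residue inherited from $\tilde\Psi_b$.

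The main obstacle I anticipate is bookkeeping the $M$- and $\log M$-dependences sharply enough that the final constants are independent of $M$ and $K$, as claimed. The $(\log M)^C$ factors entering through the logarithmic growth of $T_1,T_2$ and through the $\chi_{B_1}$-localized tails must be beaten by the $\log M$ denominator from $(\Lambda Q,\Phi_M)$, which forces us to choose $b^{*}(M)$ small enough that $b_1(\log M)^C\ll 1$; this is allowed by the statement. A secondary subtlety is that $\lambda_s/\lambda$ is itself one of the unknowns, so the three equations form a coupled linear system that should be solved simultaneously rather than row-by-row; since the leading matrix is diagonal, a single Gauss-elimination pass suffices.
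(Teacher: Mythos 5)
Your proposal follows the paper's proof essentially verbatim: differentiate the three orthogonality conditions via the evolution equation, observe that $(\Phi_M,T_1)=(\Phi_M,T_2)=0$ and $HT_1=-\Lambda Q$, $HT_2=-T_1$ make the leading $3\times3$ matrix diagonal with alternating entries $\pm(\Lambda Q,\Phi_M)\asymp 64\log M$, estimate the error terms, and invert. The paper packages the same bookkeeping via an auxiliary quantity $D(t)$ (the sum of the three modulation residuals) and a self-improving estimate $D(t)\lesssim \frac{1}{\sqrt{\log M}}(\sqrt{\varXi_6}+\tfrac{b_1^3}{|\log b_1|})+b_1^{3.5}+b_1M^C D(t)$; your ``single Gauss-elimination pass'' is the same maneuver.

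One small but real slip in the bookkeeping: you bound $(L(\varepsilon)+N(\varepsilon),H^k\Phi_M)$ by $(\log M)^C b_1\sqrt{\varXi_{2k}}$, but for the $k=0,1$ rows this is too weak. With the bootstrap bound $\varXi_2\lesssim b_1^{4/3}|\log b_1|^K$, the $k=1$ term would give $\approx b_1^{5/3+}$, which is \emph{larger} than $b_1^{3.5}$, so it would not be absorbed into \eqref{Modeq1}. The correct estimate, which is what the paper uses, is that the pairing takes place on the compact support $\{y\le 2M\}$ of $H^k\Phi_M$, where all of $\varepsilon$ and its derivatives are controlled by the strongest norm $\varXi_6$ (via the interior estimates in Appendix~A/B), giving $|(L+N,H^k\Phi_M)|\lesssim b_1 C(M)\sqrt{\varXi_6}$ for every $k$. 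With $\sqrt{\varXi_6}\lesssim\sqrt{K}\,b_1^3/|\log b_1|$ this is indeed $\ll b_1^{3.5}$ after taking $b^*(M)$ small, which is consistent with the $b_1^4$ you quote in the next line --- so this looks like a notational slip ($\varXi_{2k}$ for $\varXi_6$) rather than a conceptual gap, but it is exactly the kind of exponent arithmetic one must get right to close the bootstrap.
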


\begin{rem}
(\ref{Modeq1}) shows $|(b_{1})_{s}|\leq b_{1}^{2}$ and hence (\ref{apriori2})
holds. 
\end{rem}

\begin{proof}
Let 
\begin{equation}
D(t):=\left|\frac{\lambda_{s}}{\lambda}+b_{1}\right|+|(b_{1})_{s}+b_{1}^{2}(1+c_{b_1})-b_{2}|+|(b_{2})_{s}+b_{1}b_{2}(3+c_{b_1})|.
\end{equation}

\noindent \emph{\uline{Step 1: Law for $b_{2}$.}} Take the inner product of (\ref{dynamicepsilon})
with $H^{2}\Phi_{M}$. Using (\ref{orthoepsilon}) we get 
\begin{align*}
(\Mod,H^{2}\Phi_{M}) & =-(H^{3}\varepsilon,\Phi_{M})-(H^{2}\tilde{\Psi}_{b},\Phi_{M})+\left(\frac{\lambda_{s}}{\lambda}\Lambda\varepsilon+L(\varepsilon)+N(\varepsilon),H^{2}\Phi_{M}\right).
\end{align*}
On the other hand, by the definition (\ref{defMod}) of $\Mod$, 
\begin{align*}
&(\Mod,H^{2}\Phi_{M})\\
={} & -\left(\frac{\lambda_{s}}{\lambda}+b_{1}\right)(\Lambda\tilde{Q}_{b},H^{2}\Phi_{M})+[(b_{1})_{s}+b_{1}^{2}(1+c_{b_1})-b_{2}]\left(\tilde{T}_{1}+\chi_{B_{1}}\sum_{j=2}^{4}\frac{\partial S_{j}}{\partial b_{1}},H^{2}\Phi_{M}\right)\\
 & +[(b_{2})_{s}+b_{1}b_{2}(3+c_{b_1})]\left(\tilde{T}_{2}+\chi_{B_{1}}\sum_{j=3}^{4}\frac{\partial S_{j}}{\partial b_{2}},H^{2}\Phi_{M}\right)\\
= {}&[(b_{1})_{s}+b_{1}^{2}(1+c_{b_1})-b_{2}]\left(\sum_{j=2}^{4}\frac{\partial S_{j}}{\partial b_{1}},H^{2}\Phi_{M}\right)\\
 & +[(b_{2})_{s}+b_{1}b_{2}(3+c_{b_1})]\left(\sum_{j=3}^{4}\frac{\partial S_{j}}{\partial b_{2}},H^{2}\Phi_{M}\right)+[(b_{2})_{s}+b_{1}b_{2}(3+c_{b_1})](\Lambda Q,\Phi_{M})
\end{align*}
where we used $H^{2}\Lambda\tilde{Q}_{b}=H^{2}\tilde{T}_{1}=0$ and
$H^{2}\tilde{T}_{2}=\Lambda Q$ for $y\leq2M$.

We now estimate these terms respectively. By Cauchy-Schwarz inequality and (\ref{Psi4'})
\[
|(H^{3}\varepsilon,\Phi_{M})|\lesssim||H^{3}\varepsilon||_{L^{2}}||\Phi_{M}||_{L^{2}}\lesssim\sqrt{\log M}\sqrt{\varXi_{6}},
\]
\[
|(H^{2}\tilde{\Psi}_{b},\Phi_{M})|\lesssim||H^{2}\tilde{\Psi}_{b}||_{L^{2}(y\leq2M)}||\Phi_{M}||_{L^{2}}\lesssim b_{1}^{5}M^{C}.
\]
By interpolation bounds in the Appendix B and \eqref{exit2}, we obtain
\begin{align*}
\left|\left(\frac{\lambda_{s}}{\lambda}\Lambda\varepsilon,H^{2}\Phi_{M}\right)\right| & \lesssim(|D(t)|+b_{1})\cdotp C(M)\sqrt{\varXi_{6}}\lesssim b_{1}M^{C}|D(t)|+\sqrt{\log M}\sqrt{\varXi_{6}},
\end{align*}
\[
|(L(\varepsilon)+N(\varepsilon),H^{2}\Phi_{M})|\lesssim b_{1}C(M)\sqrt{\varXi_{6}}\lesssim\sqrt{\log M}\sqrt{\varXi_{6}}.
\]
Using rough bound of $S_{j}$ in \eqref{roughS2}, \eqref{roughS3}, \eqref{roughS4}, one has 
\begin{align*}
\left|(b_{1})_{s}+b_{1}^{2}(1+c_{b_1})-b_{2})\left(\sum_{j=2}^{4}\frac{\partial S_{j}}{\partial b_{1}},H^{2}\Phi_{M}\right)+[(b_{2})_{s}+b_{1}b_{2}(3+c_{b_1})]\left(\sum_{j=3}^{4}\frac{\partial S_{j}}{\partial b_{2}},H^{2}\Phi_{M}\right)\right|\\[1em]
\lesssim b_{1}M^{C}|D(t)|.
\end{align*}
So we conclude that
\begin{equation}\label{step1}
\begin{split}
|(b_{2})_{s}+b_{1}b_{2}(3+c_{b_1})| & =\frac{1}{|(\Lambda Q,\Phi_{M})|}O\left(\sqrt{\log M}\sqrt{\varXi_{6}}+b_{1}^{5}M^{C}+b_{1}M^{C}|D(t)|\right)\\
 & \lesssim\frac{1}{\sqrt{\log M}}\left(\sqrt{\varXi_{6}}+\frac{b_{1}^{3}}{|\log b_{1}|}\right)+b_{1}M^{C}|D(t)|.
 \end{split}
\end{equation}

\noindent \emph{\uline{Step 2: Law for $b_{1}$ and $\lambda$.}} Similarly, take the inner product of (\ref{dynamicepsilon})
with $H\Phi_{M}$ (note that $(H\varepsilon,H\Phi_{M})=(\varepsilon,H^{2}\Phi_{M})=0$)
: 
\[
(\Mod,H\Phi_{M})=-(H\tilde{\Psi}_{b},\Phi_{M})+\left(\frac{\lambda_{s}}{\lambda}\Lambda\varepsilon+L(\varepsilon)+N(\varepsilon),H\Phi_{M}\right),
\]
and again by the definition of $\Mod$ 
\begin{align*}
&(\Mod,H\Phi_{M})\\
= &-\left(\frac{\lambda_{s}}{\lambda}+b_{1}\right)(\Lambda\tilde{Q}_{b},H\Phi_{M})+[(b_{1})_{s}+b_{1}^{2}(1+c_{b_1})-b_{2}]\left(\tilde{T}_{1}+\chi_{B_{1}}\sum_{j=2}^{4}\frac{\partial S_{j}}{\partial b_{1}},H\Phi_{M}\right)\\
 & +[(b_{2})_{s}+b_{1}b_{2}(3+c_{b_1})]\left(\tilde{T}_{2}+\chi_{B_{1}}\sum_{j=3}^{4}\frac{\partial S_{j}}{\partial b_{2}},H\Phi_{M}\right)\\
= &\  [(b_{1})_{s}+b_{1}^{2}(1+c_{b_1})-b_{2}](-\Lambda Q,\Phi_{M})+[(b_{1})_{s}+b_{1}^{2}(1+c_{b_1})-b_{2}]\left(\sum_{j=2}^{4}\frac{\partial S_{j}}{\partial b_{1}},H\Phi_{M}\right)\\
 & +[(b_{2})_{s}+b_{1}b_{2}(3+c_{b_1})]\left(\sum_{j=3}^{4}\frac{\partial S_{j}}{\partial b_{2}},H\Phi_{M}\right)
\end{align*}
The above computation yields 
\begin{equation}
|(b_{1})_{s}+b_{1}^{2}(1+c_{b_1})-b_{2}|\lesssim b_{1}^{3+\frac{1}{2}}+b_{1}M^{C}|D(t)|.
\end{equation}
Finally, taking the inner product of (\ref{dynamicepsilon}) with $\Phi_{M}$,
we obtain 
\begin{equation}
\left|\frac{\lambda_{s}}{\lambda}+b_{1}\right|\lesssim b_{1}^{3+\frac{1}{2}}+b_{1}M^{C}|D(t)|.\label{step2}
\end{equation}

\noindent \emph{\uline{Step 3: Conclude estimates.}} From (\ref{step1})-(\ref{step2}),
it is not hard to get 
\begin{equation}
|D(t)|\lesssim\frac{1}{\sqrt{\log M}}\left(\sqrt{\varXi_{6}}+\frac{b_{1}^{3}}{|\log b_{1}|}\right)+b_{1}^{3+\frac{1}{2}}.
\end{equation}
Inject this into (\ref{step1})-(\ref{step2}) and we get the desired
results. 
\end{proof}
Unfortunately, (\ref{Modeq2}) is not enough to derive the sharp blow-up rate of $\lambda$, because $b_1b_2c_{b_1}$ (up to $\sqrt{\log M}$) is about the same size with $b_1^3/|\log b_1|$ on the right hand side. To get the sharp blow-up rate and close the bootstrap, we need the following improved
bound for $b_{2}$. 
\begin{prop}\label{prop:improved modulation}
(Improved modulation) Let $\delta>0$ be small enough, $B_{\delta}:=\frac{1}{b_{1}^{\delta}}$
and 
\begin{equation}
\tilde{b}_{2}:=b_{2}+\frac{(H^{2}\varepsilon,\chi_{B_{\delta}}\Lambda Q)}{64\delta|\log b_{1}|},\label{defb2'}
\end{equation}
then 
\begin{equation}
|\tilde{b}_{2}-b_{2}|\lesssim b_{1}^{2+\frac{1}{2}},\label{b2'-b2}
\end{equation}
\begin{equation}
|(\tilde{b}_{2})_{s}+b_{1}\tilde{b}_{2}(3+c_{b_1})|\lesssim\frac{C(M)}{\sqrt{|\log b_{1}|}}\left[\sqrt{\varXi}_{6}+\frac{b_{1}^{3}}{|\log b_{1}|}\right].\label{Modeq2'}
\end{equation}
\end{prop}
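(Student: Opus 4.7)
The argument splits into the smallness bound \eqref{b2'-b2} (ensuring $\tilde b_{2}$ stays close to $b_{2}$) and the improved dynamical law \eqref{Modeq2'}. The guiding idea is to re-derive the modulation law for $b_{2}$ by testing the $\varepsilon$-equation against $\chi_{B_{\delta}}\Lambda Q$, a function localized at the large scale $B_{\delta}=b_{1}^{-\delta}\gg M$, rather than against $H^{2}\Phi_{M}$ (localized at the fixed scale $M$), and to absorb the discrepancy into the redefinition of $\tilde b_{2}$. This trades the Cauchy--Schwarz normalization constant $\sqrt{\log M}$ of \eqref{Modeq2} for the much larger $\sqrt{\delta|\log b_{1}|}$, producing exactly the gain required by \eqref{Modeq2'}.

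\textbf{Proof of \eqref{b2'-b2}.} Integrate by parts twice: $(H^{2}\varepsilon,\chi_{B_{\delta}}\Lambda Q)=(\varepsilon,H^{2}(\chi_{B_{\delta}}\Lambda Q))$. Since $H\Lambda Q=0$, the function $H^{2}(\chi_{B_{\delta}}\Lambda Q)=H\bigl([H,\chi_{B_{\delta}}]\Lambda Q\bigr)$ is supported in $\{B_{\delta}\le y\le 2B_{\delta}\}$ with pointwise size $\lesssim B_{\delta}^{-6}$ (each commutator with $\chi_{B_{\delta}}$ contributes two inverse powers of $B_{\delta}$, combined with $\Lambda Q\sim y^{-2}$). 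Apply Cauchy--Schwarz with weight $y^{-6}$, using the Hardy-type bound $\int|\varepsilon|^{2}/[(1+y^{12})(1+\log^{2}y)]\lesssim\varXi_{6}$ (the $k=3$ case in Step 2 of the strategy) and the bootstrap $\varXi_{6}\lesssim Kb_{1}^{6}/|\log b_{1}|^{2}$ to get
\[
\bigl|(H^{2}\varepsilon,\chi_{B_{\delta}}\Lambda Q)\bigr|\lesssim\delta|\log b_{1}|\sqrt{\varXi_{6}}\cdot B_{\delta}^{2}\lesssim\sqrt{K}\,\delta\,b_{1}^{3-2\delta}.
\]
Dividing by $64\delta|\log b_{1}|$ and fixing $\delta<\tfrac{1}{4}$ yields \eqref{b2'-b2}.

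\textbf{Proof of \eqref{Modeq2'}.} Differentiate $\tilde b_{2}$ in time. Using \eqref{b2'-b2} together with $|(b_{1})_{s}|\lesssim b_{1}^{2}$, the contributions from $\frac{d}{ds}(1/|\log b_{1}|)$ and from $b_{1}(3+c_{b_{1}})(\tilde b_{2}-b_{2})$ are $O(b_{1}^{7/2})$, well within tolerance. One must therefore control
\[
\mathcal{E}:=(b_{2})_{s}+b_{1}b_{2}(3+c_{b_{1}})+\frac{1}{64\delta|\log b_{1}|}\frac{d}{ds}(H^{2}\varepsilon,\chi_{B_{\delta}}\Lambda Q).
\]
Expanding the time derivative with \eqref{dynamicepsilon} produces the term $-(H^{2}\Mod,\chi_{B_{\delta}}\Lambda Q)/[64\delta|\log b_{1}|]$. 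The crucial observation is that $H^{2}\Lambda Q=0$ and $H^{2}\tilde T_{1}=0$ on $y\le 2M$, while $H^{2}\tilde T_{2}=\Lambda Q$, so the dominant contribution of $H^{2}\Mod$ stems from the $\tilde T_{2}$ piece:
\[
(H^{2}\Mod,\chi_{B_{\delta}}\Lambda Q)=\bigl[(b_{2})_{s}+b_{1}b_{2}(3+c_{b_{1}})\bigr](\Lambda Q,\chi_{B_{\delta}}\Lambda Q)+\mathcal{R},
\]
where $(\Lambda Q,\chi_{B_{\delta}}\Lambda Q)=64\delta|\log b_{1}|(1+o(1))$---this is precisely the normalization chosen in \eqref{defb2'}---and $\mathcal{R}$ collects the lower-order modulation contributions, controlled via \eqref{Modeq1}. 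Hence the $(b_{2})_{s}+b_{1}b_{2}(3+c_{b_{1}})$ summand in $\mathcal{E}$ cancels exactly to leading order, leaving
\[
\mathcal{E}=\frac{1}{64\delta|\log b_{1}|}\Bigl\{-(H^{3}\varepsilon,\chi_{B_{\delta}}\Lambda Q)+\mathcal{F}\Bigr\}+O(b_{1}^{7/2}),
\]
with $\mathcal{F}$ gathering $(\lambda_{s}/\lambda)(H^{2}\Lambda\varepsilon,\chi_{B_{\delta}}\Lambda Q)$, $(H^{2}[-\tilde\Psi_{b}+L(\varepsilon)+N(\varepsilon)],\chi_{B_{\delta}}\Lambda Q)$, and $(H^{2}\varepsilon,\partial_{s}\chi_{B_{\delta}}\cdot\Lambda Q)$. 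Cauchy--Schwarz using $\|\chi_{B_{\delta}}\Lambda Q\|_{L^{2}}^{2}\approx 64\delta|\log b_{1}|$ supplies an extra factor $1/\sqrt{\delta|\log b_{1}|}$ after the normalization, producing the target $C(M)/\sqrt{|\log b_{1}|}$ improvement over \eqref{Modeq2}.

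\textbf{Main obstacle.} The delicate step is checking that each piece of $\mathcal{F}$ and $\mathcal{R}$ fits within the target bound. The $\tilde\Psi_{b}$ and $L(\varepsilon),N(\varepsilon)$ contributions are handled by integration by parts $(\cdot,H^{2}(\chi_{B_{\delta}}\Lambda Q))$ combined with \eqref{Psi3'} and the $\varXi_{2k}$ bootstrap. The $(\lambda_{s}/\lambda)$ term uses \eqref{Modeq1} to extract an extra factor $b_{1}$. Most subtle is $(H^{2}\varepsilon,\partial_{s}\chi_{B_{\delta}}\cdot\Lambda Q)$: the cutoff derivative carries $(b_{1})_{s}/b_{1}\sim b_{1}$, and closing the estimate requires the Hardy bound $\int|H^{2}\varepsilon|^{2}/[(1+y^{4})(1+\log^{2}y)]\lesssim\varXi_{6}$ (the $k=1$ Hardy bound applied to $H^{2}\varepsilon$) rather than the weaker $\varXi_{4}$ control, in order to avoid spurious $|\log b_{1}|^{K/2}$ losses. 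Throughout one must verify that the implicit constant $C(M)$ depends only on $M$ and not on $b_{1}$.
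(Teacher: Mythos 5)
Your proposal is correct and follows essentially the same route as the paper: pair the $\varepsilon$-equation with $H^{2}\chi_{B_{\delta}}\Lambda Q$, extract the $(b_{2})_{s}+b_{1}b_{2}(3+c_{b_{1}})$ factor through the flux $(\Lambda Q,\chi_{B_{\delta}}\Lambda Q)=64\delta|\log b_{1}|(1+o(1))$, absorb the boundary time-derivative into the definition of $\tilde b_{2}$, and control the remaining terms by weighted Cauchy--Schwarz against $\varXi_{6}$ and $\tilde\Psi_{b}$ bounds. One small slip worth fixing: the identities $H^{2}\Lambda Q=0$ and $H^{2}\tilde T_{1}=0$ must be invoked on the support of $\chi_{B_{\delta}}\Lambda Q$, i.e. on $y\le 2B_{\delta}$ (where $\chi_{B_{1}}\equiv 1$ so $\tilde T_{i}=T_{i}$), not on $y\le 2M$ as you wrote — that phrase is carried over from the $\Phi_{M}$-based modulation equations and does not apply to the new, larger-scale test function.
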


\begin{proof}
As above, we replace $H^{2}\Phi_{M}$ by $H^{2}\chi_{B_{\delta}}\Lambda Q$
and take the inner product with (\ref{dynamicepsilon}) 
\begin{align*}
(\Mod,H^{2}\chi_{B_{\delta}}\Lambda Q)= & -\frac{d}{ds}(H^{2}\varepsilon,\chi_{B_{\delta}}\Lambda Q)+(H^{2}\varepsilon,(\partial_{s}\chi_{B_{\delta}})\Lambda Q)-(H^{3}\varepsilon,\chi_{B_{\delta}}\Lambda Q)\\
 & -(H^{2}\tilde{\Psi}_{b},\chi_{B_{\delta}}\Lambda Q)+(\frac{\lambda_{s}}{\lambda}\Lambda\varepsilon+L(\varepsilon)+N(\varepsilon),H^{2}\chi_{B_{\delta}}\Lambda Q).
\end{align*}
Since $B_{\delta}\leq B_{0}\leq B_{1}$ for $\delta$ small enough,
we get 
\begin{align*}
&(\Mod,H^{2}\chi_{B_{\delta}}\Lambda Q)\\
={}&[(b_{1})_{s}+b_{1}^{2}(1+c_{b_1})-b_{2}]\left(\sum_{j=2}^{4}\frac{\partial S_{j}}{\partial b_{1}},H^{2}\chi_{B_{\delta}}\Lambda Q\right)\\
 & +[(b_{2})_{s}+b_{1}b_{2}(3+c_{b_1})]\left(\sum_{j=3}^{4}\frac{\partial S_{j}}{\partial b_{2}},H^{2}\chi_{B_{\delta}}\Lambda Q\right)+[(b_{2})_{s}+b_{1}b_{2}(3+c_{b_1})](\Lambda Q,\chi_{B_{\delta}}\Lambda Q)\\
= {}&\frac{b_{1}}{b_{1}^{C\delta}}O\left(\sqrt{\varXi_{6}}+\frac{b_{1}^{3}}{|\log b_{1}|}\right)+[(b_{2})_{s}+b_{1}b_{2}(3+c_{b_1})](64\delta|\log b_{1}|+O(1))\\
={} &[(b_{2})_{s}+b_{1}b_{2}(3+c_{b_1})]\cdotp64\delta|\log b_{1}|+O\left(\sqrt{\varXi_{6}}+\frac{b_{1}^{3}}{|\log b_{1}|}\right)
\end{align*}
where we used (\ref{Modeq1}) and ( \ref{Modeq2}). We estimate 
\begin{gather*}
|(H^{2}\varepsilon,(\partial_{s}\chi_{B_{\delta}})\Lambda Q)| =|(H^{2}\varepsilon,(b_{1})_{s}(\partial_{b_{1}}\chi_{B_{\delta}})\Lambda Q)| \lesssim\frac{b_{1}}{b_{1}^{C\delta}}\cdotp C(M)\sqrt{\varXi_{6}}\lesssim\sqrt{\varXi_{6}},\\
|(H^{3}\varepsilon,\chi_{B_{\delta}}\Lambda Q)|\lesssim||H^{3}\varepsilon||_{L^{2}}||\chi_{B_{\delta}}\Lambda Q||_{L^{2}}\lesssim\sqrt{|\log b_{1}|}\sqrt{\varXi_{6}},\\
|(H^{2}\tilde{\Psi}_{b},\chi_{B_{\delta}}\Lambda Q)| \lesssim||H^{2}\tilde{\Psi}_{b}||_{L^{2}(y\leq2B_{\delta})}||\chi_{B_{\delta}}\Lambda Q||_{L^{2}} \lesssim\frac{b_{1}^{5}}{b_{1}^{C\delta}}\sqrt{|\log b_{1}|}\lesssim\frac{b_{1}^{3}}{|\log b_{1}|},\\
\left|\left(\frac{\lambda_{s}}{\lambda}\Lambda\varepsilon+L(\varepsilon)+N(\varepsilon),H^{2}\chi_{B_{\delta}}\Lambda Q\right)\right|\lesssim\frac{b_{1}}{b_{1}^{C\delta}}\cdotp C(M)\sqrt{\varXi_{6}}\lesssim\sqrt{\varXi_{6}}.
\end{gather*}
The collection of these bounds yields the preliminary estimate 
\begin{equation}
\begin{split}
&\frac{d}{ds}(H^{2}\varepsilon,\chi_{B_{\delta}}\Lambda Q)+[(b_{2})_{s}+b_{1}b_{2}(3+c_{b_1})]\cdotp64\delta|\log b_{1}|\\
\lesssim{}&C(M)\sqrt{|\log b_{1}|}\left(\sqrt{\varXi_{6}}+\frac{b_{1}^{3}}{|\log b_{1}|}\right).
\end{split}
\end{equation}
We also have 
\begin{equation}
(H^{2}\varepsilon,\chi_{B_{\delta}}\Lambda Q)\lesssim\frac{1}{b_{1}^{C\delta}}\cdotp\sqrt{\varXi_{6}}\lesssim b_{1}^{2+\frac{1}{2}},
\end{equation}
which bounds the deviation 
\begin{equation}
|\tilde{b}_{2}-b_{2}|=\frac{|(H^{2}\varepsilon,\chi_{B_{\delta}}\Lambda Q)|}{64\delta|\log b_{1}|}\lesssim b_{1}^{2+\frac{1}{2}}.
\end{equation}
The proposition follows from 
\begin{align*}
|(\tilde{b}_{2})_{s}+b_{1}\tilde{b}_{2}(3+c_{b_1})| & =\left|\frac{d}{ds}\frac{(H^{2}\varepsilon,\chi_{B_{\delta}}\Lambda Q)}{64\delta|\log b_{1}|}+[(b_{2})_{s}+b_{1}b_{2}(3+c_{b_1})]+O(b_{1}^{3+\frac{1}{2}})\right|\\
 & =\left|\frac{1}{64\delta|\log b_{1}|}\cdotp\frac{d}{ds}(H^{2}\varepsilon,\chi_{B_{\delta}}\Lambda Q)+[(b_{2})_{s}+b_{1}b_{2}(3+c_{b_1})]+O(b_{1}^{3+\frac{1}{2}})\right| \\
 & \lesssim\frac{C(M)}{\sqrt{|\log b_{1}|}}\left(\sqrt{\varXi_{6}}+\frac{b_{1}^{3}}{|\log b_{1}|}\right). 
\end{align*}
This completes the proof.
\end{proof}

\subsection{Lyapounov monotonicity}

We now turn to derive a suitable Lyapounov functional for $\varXi_{6}$
energy. This is crucial to close the bootstrap in Proposition \ref{prop:bootstrap}.
\begin{prop}
\label{prop:Lya6}We have 
\begin{equation}
\frac{d}{dt}\left[\frac{1}{\lambda^{10}}\left(\varXi_{6}+O(b_{1}^{\frac{1}{4}}\cdotp\frac{b_{1}^{6}}{|\log b_{1}|^{2}}\right)\right]\leq C\frac{b_{1}}{\lambda^{12}}\left[\frac{b_{1}^{6}}{|\log b_{1}|^{2}}+\frac{\varXi_{6}}{\sqrt{\log M}}+\frac{b_{1}^{3}}{|\log b_{1}|}\sqrt{\varXi_{6}}\right]\label{Lya6}
\end{equation}
for some constant $C>0$ independent of $M$ and $K$ if $b_{1}$ is
small enough. 
\end{prop}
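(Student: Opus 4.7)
The plan is a standard weighted energy estimate at the $H^3$ level for $\varepsilon_6 = H^3\varepsilon$, combined with a careful choice of time variable. First, I would apply $H^3$ to the evolution equation \eqref{dynamicepsilon}, noting that $H$ is time-independent so it commutes with $\partial_s$. Using the scaling commutator $[H,\Lambda] = 2H + (V+\Lambda V)$ iteratively yields
\begin{equation*}
\partial_s \varepsilon_6 + H\varepsilon_6 - \frac{\lambda_s}{\lambda}(\Lambda + 6)\varepsilon_6 = H^3 \mathscr{F} + \mathcal{R}(\varepsilon),
\end{equation*}
where $\mathcal{R}(\varepsilon)$ gathers lower order commutator contributions supported essentially where $V+\Lambda V$ lives (so on $y\lesssim 1$). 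Taking the $L^2$ inner product with $\varepsilon_6$, integrating by parts and switching to $t$ via $\partial_t = \lambda^{-2}\partial_s$, the prefactor $\lambda^{-10}$ is chosen precisely to cancel the scaling term $-10\frac{\lambda_s}{\lambda}\varXi_6$: indeed the identity $\int \Lambda\varepsilon_6 \cdot \varepsilon_6 = -2\varXi_6$ combined with $\frac{d}{dt}\lambda^{-10}$ produces only the modulation error $|\frac{\lambda_s}{\lambda}+b_1|\cdot\varXi_6$, which is $O(b_1^{7/2}\varXi_6)$ by \eqref{Modeq1}, absorbable in the right-hand side.

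Next I would bound $(H^3 \mathscr{F},\varepsilon_6)$ term by term. For the error $\tilde\Psi_b$, Cauchy--Schwarz together with \eqref{Psi2'} gives
\begin{equation*}
|(H^3\tilde\Psi_b,\varepsilon_6)| \lesssim \frac{b_1^4}{|\log b_1|}\sqrt{\varXi_6},
\end{equation*}
which after the $b_1/\lambda^{12}$ rescaling is exactly the last term in \eqref{Lya6}. For the nonlinear and small-linear terms $L(\varepsilon)+N(\varepsilon)$, I would use the bootstrap bounds \eqref{exit1}--\eqref{exit2} together with the weighted Hardy/Sobolev inequalities ensured by the orthogonality \eqref{orthoepsilon} (stated in the paper's appendix) to show they contribute $O(b_1\varXi_6 + b_1\cdot b_1^6/|\log b_1|^2)$. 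The dissipation $(H\varepsilon_6,\varepsilon_6) = \varXi_8$ can be used in a Schwarz-type inequality to absorb any potentially singular inner-region piece.

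The delicate term is the modulation piece $(H^3\mathrm{Mod},\varepsilon_6)$. Since $H^2\Lambda\tilde Q_b = 0$ and $H^2\tilde T_1 = 0$ for $y\leq 2M$, applying another $H$ produces $H^3\tilde T_2 = H\Lambda Q$ near the origin but growth at infinity, so I would first localize the cut-offs carefully and exploit $H^2\chi_{B_\delta}\Lambda Q$ to mimic the strategy of Proposition \ref{prop:improved modulation}. Concretely, the projection of $\mathrm{Mod}$ against $\varepsilon_6$ will essentially reduce to $(b_2)_s + b_1 b_2(3+c_{b_1})$ multiplied by $(\tilde T_2, H^3\varepsilon)$-type inner products, which by the improved bound \eqref{Modeq2'} is $O(\frac{C(M)}{\sqrt{|\log b_1|}}(\sqrt{\varXi_6}+b_1^3/|\log b_1|))$; this yields the $\varXi_6/\sqrt{\log M}$ middle term in \eqref{Lya6}. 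The corrective quantity $O(b_1^{1/4}b_1^6/|\log b_1|^2)$ arises as the time derivative of an inner-product correction term (analogous to the $(H^2\varepsilon,\chi_{B_\delta}\Lambda Q)$ gained in \eqref{defb2'}) needed to upgrade a borderline $b_2$-modulation contribution into the sharp form.

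The main obstacle will be this last step: controlling the modulation remainder at order $H^3$ sharply enough that the $b_1 b_2 c_{b_1}$ piece (which is of the same size as $b_1^3/|\log b_1|$) does not dominate. This requires threading the improved modulation identity for $\tilde b_2$ through the energy estimate and absorbing the resulting boundary-in-time term into a modified $\varXi_6$ functional, which is what the extra correction in \eqref{Lya6} encodes.
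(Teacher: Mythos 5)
Your overall skeleton — compute $\partial_s$ of $\varXi_6$ after applying $H^3$ to \eqref{dynamicepsilon} and arranging the $\lambda^{-10}$ prefactor to absorb the scaling — is in the same spirit as the paper (the paper works in the original $(t,r)$ variables with $H_\lambda$ and the commutator $[\partial_t,H_\lambda]=-\partial_t\bar V$, while you work in $(s,y)$ with $[H,\Lambda]$, but this is just a change of variables). However, there are several genuine gaps, and the most important one concerns how the $\mathscr{F}$-terms are handled.

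The central obstruction is the term $\int\varepsilon_6\,H^3\bigl(L(\varepsilon)+N(\varepsilon)\bigr)$. You propose to control it directly with Cauchy--Schwarz plus the bootstrap and weighted Hardy inequalities, claiming $O(b_1\varXi_6+b_1\,b_1^6/|\log b_1|^2)$. This does not close: from the paper's \eqref{H3L} one only gets $\int|H^3L(\varepsilon)|^2\lesssim b_1^2\varXi_6\lesssim K\,b_1^2\,b_1^6/|\log b_1|^2$ (and \eqref{claim4} has a $C_\gamma b_1^{2-\gamma}$ prefactor), so $\bigl|\int\varepsilon_6 H^3\mathscr{F}^1\bigr|$ acquires a factor of $\sqrt{K}$ or $b_1^{-\gamma/2}$ that is \emph{not} allowed, since \eqref{Lya6} requires the constant $C$ to be independent of $K$ and $M$. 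The paper avoids this by integrating by parts \emph{in time}: it replaces $w_6$ by $\partial_t w_4+\cdots$ via \eqref{w4eq} and then again uses \eqref{w0eq}, which transfers one copy of $H_\lambda$ off $\mathscr{F}^1$ at the cost of a time-boundary term $\frac{d}{dt}\int w_6H_\lambda^2\mathscr{F}_\lambda^1$. Your proposal contains no analogue of this step. Relatedly, you misidentify the source of the correction $O(b_1^{1/4}\cdot b_1^6/|\log b_1|^2)$ in \eqref{Lya6}: you attribute it to an improved-modulation construction in the style of Proposition \ref{prop:improved modulation}, whereas in the paper it comes precisely from the boundary terms $-b_1\hat V w_4^2-2w_4H_\lambda^2(b_1\hat Vw)$ and $\int w_6H_\lambda^2\mathscr{F}_\lambda^1$ produced by the time integration by parts; the improved modulation estimate \eqref{Modeq2'} is not used in the proof of Proposition \ref{prop:Lya6} at all (only \eqref{Modeq1}--\eqref{Modeq2} enter the $\Mod$-estimate \eqref{claim2}), but later in Lemma \ref{lemma：dynamicV'}.

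Two further technical errors are worth noting. First, the claim ``the dissipation $(H\varepsilon_6,\varepsilon_6)=\varXi_8$ can be used to absorb singular pieces'' is wrong on two counts: $(H\varepsilon_6,\varepsilon_6)$ is not $\varXi_8=\int|H^4\varepsilon|^2$, and because $H$ has a negative eigenvalue, the quadratic form $(H\varepsilon_6,\varepsilon_6)$ is not sign-definite. The paper controls it as $-\int\varepsilon_6H\varepsilon_6\lesssim(\varepsilon_6,\psi)^2\lesssim\varsigma^6\tau^2$ using the orthogonality conditions and the bootstrap bound on $\tau$, and it is an \emph{upper}-bounded contribution, not an absorber. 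Second, the identity $\int\Lambda\varepsilon_6\cdot\varepsilon_6=-2\varXi_6$ is off by a factor: with the paper's radial normalization $\int f=\int_0^\infty f\,r^3dr$ (i.e. in $\mathbb{R}^4$) one has $\int\Lambda g\cdot g=-\varXi_6$ when $g=\varepsilon_6$. With the correct value the scaling cancellation does go through (since $H^3\Lambda=\Lambda H^3+6H^3+\text{l.o.t.}$ and $2(5\varXi_6)\tfrac{\lambda_s}{\lambda}$ then exactly cancels $-10\tfrac{\lambda_s}{\lambda}\varXi_6$), but with your stated value the bookkeeping would not close.
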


\begin{proof}
\emph{\uline{Step 1: Suitable derivatives.}} Define $w(t,r)=\frac{1}{\lambda}\varepsilon(s,y)$,  in abbreviation $w(t,\cdot)=(\varepsilon(s,\cdot))_\lambda$. We also denote 
\begin{equation}
w_{2k}=H_{\lambda}^{k}w,\quad \text{where}\ H_{\lambda}:=-\Delta_{r}-\frac{1}{\lambda}V_{\lambda}:=-\Delta_{r}-\bar{V}.
\end{equation}
Then (\ref{dynamicepsilon}) becomes 
\begin{equation}
\partial_{t}w+H_{\lambda}w=\lambda^{-2}\mathscr{F}_{\lambda}.\label{w0eq}
\end{equation}
Using commutator identity
\begin{equation}
[\partial_{t},H_{\lambda}]=\partial_{t}H_{\lambda}-H_{\lambda}\partial_{t}=-\partial_{t}\bar{V},\label{commutator-1}
\end{equation}
we further derive 
\begin{equation}
\partial_{t}w_{2}+H_{\lambda}w_{2}=-\partial_{t}\bar{V}w+H_{\lambda}\left(\lambda^{-2}\mathscr{F}_{\lambda}\right),\label{w2eq}
\end{equation}
\begin{equation}
\partial_{t}w_{4}+H_{\lambda}w_{4}=-\partial_{t}\bar{V}w_{2}-H_{\lambda}(\partial_{t}\bar{V}\cdotp w)+H_{\lambda}^{2}(\lambda^{-2}\mathscr{F}_{\lambda}),\label{w4eq}
\end{equation}
\begin{equation}
\partial_{t}w_{6}+H_{\lambda}w_{6}=-\partial_{t}\bar{V}w_{4}-H_{\lambda}(\partial_{t}\bar{V}\cdotp w_{2})-H_{\lambda}^{2}(\partial_{t}\bar{V}\cdotp w)+H_{\lambda}^{3}(\lambda^{-2}\mathscr{F}_{\lambda}).\label{w6eq}
\end{equation}

\noindent \emph{\uline{Step 2: Energy identity.}}\emph{ }We first
note 
\[
\partial_{t}\bar{V}=\frac{\lambda_{s}}{\lambda}{\hat{V}},\quad \text{with}\ {\hat{V}}=\frac{-1}{\lambda^{3}}(V+\Lambda V)_{\lambda},
\]
then using (\ref{w6eq}) we compute 
\begin{equation}\label{w6^2}
\begin{split}
&\frac{1}{2}\frac{d}{dt}\int w_{6}^{2}\\
={}&\int w_{6}\partial_{t}w_{6}\\
={}& -\int w_{6}\left[H_{\lambda}w_{6}+\partial_{t}\bar{V}w_{4}+H_{\lambda}(\partial_{t}\bar{V}\cdotp w_{2})+H_{\lambda}^{2}(\partial_{t}\bar{V}\cdotp w)-H_{\lambda}^{3}(\lambda^{-2}\mathscr{F}_{\lambda})\right]\\
={}& -\int w_{6}H_{\lambda}w_{6}+\left(\frac{\lambda_{s}}{\lambda}+b_{1}\right)\int w_{6}\left[{\hat{V}}w_{4}+H_{\lambda}({\hat{V}}w_{2})+H_{\lambda}^{2}({\hat{V}}w)\right]\\
 & -\int w_{6}\left[b_{1}{\hat{V}}w_{4}+H_{\lambda}(b_{1}{\hat{V}}w_{2})+H_{\lambda}^{2}(b_{1}{\hat{V}}w)-H_{\lambda}^{3}(\lambda^{-2}\mathscr{F}_{\lambda})\right].
\end{split}
\end{equation}
We further process those terms in the last line of (\ref{w6^2}).
By (\ref{w4eq}), the first term becomes 
\[
-\int b_{1}{\hat{V}}w_{4}w_{6}=\int b_{1}{\hat{V}}w_{4}\left[\partial_{t}w_{4}+\partial_{t}\bar{V}\cdotp w_{2}+H_{\lambda}(\partial_{t}\bar{V}\cdotp w)-H_{\lambda}^{2}(\lambda^{-2}\mathscr{F}_{\lambda})\right]
\]
which produces a boundary term from integration-by-parts 
\[
\int b_{1}{\hat{V}}w_{4}\partial_{t}w_{4}=\frac{1}{2}\frac{d}{dt}\int b_{1}{\hat{V}}w_{4}^{2}-\frac{1}{2}\int\partial_{t}(b_{1}{\hat{V}})w_{4}^{2}.
\]
Similarly the third term becomes 
\[
-\int w_{6}H_{\lambda}^{2}(b_{1}{\hat{V}}w)=\int H_{\lambda}^{2}(b_{1}{\hat{V}}w)\left[\partial_{t}w_{4}+\partial_{t}\bar{V}\cdotp w_{2}+H_{\lambda}(\partial_{t}\bar{V}\cdotp w)-H_{\lambda}^{2}(\lambda^{-2}\mathscr{F}_{\lambda})\right],
\]
and applying (\ref{commutator-1}) after integration-by-parts, we
have 
\begin{align*}
\int H_{\lambda}^{2}(b_{1}{\hat{V}}w)\partial_{t}w_{4}= &\ \frac{d}{dt}\int H_{\lambda}^{2}(b_{1}{\hat{V}}w)w_{4}-\int w_{4}\partial_{t}H_{\lambda}^{2}(b_{1}{\hat{V}}w)\\
= & \ \frac{d}{dt}\int H_{\lambda}^{2}(b_{1}{\hat{V}}w)w_{4}+\int w_{4}(\partial_{t}\bar{V})H_{\lambda}(b_{1}{\hat{V}}w)+\int w_{4}H_{\lambda}[(\partial_{t}\bar{V})b_{1}{\hat{V}}w]\\
 &\ -\int w_{4}H_{\lambda}^{2}(\partial_{t}(b_{1}{\hat{V}})w)-\int w_{4}H_{\lambda}^{2}(b_{1}{\hat{V}}\partial_{t}w).
\end{align*}
Now use (\ref{w0eq}) to replace $\partial_{t}w$ and we find 
\begin{align*}
-\int w_{4}H_{\lambda}^{2}(b_{1}{\hat{V}}\partial_{t}w) & =\int w_{4}H_{\lambda}^{2}(b_{1}{\hat{V}}(w_{2}-\lambda^{-2}\mathscr{F}_{\lambda}))\\
 & =\int w_{6}H_{\lambda}(b_{1}{\hat{V}}w_{2})-\int w_{4}H_{\lambda}^{2}(b_{1}{\hat{V}}\cdotp\lambda^{-2}\mathscr{F}_{\lambda}).
\end{align*}
which cancels the second term in the last line of (\ref{w6^2}).

To sum up, there holds the energy identity 
\begin{equation}\label{energy identity}
\begin{split}
&\frac{1}{2}\frac{d}{dt}\int\left[w_{6}^{2}-b_{1}{\hat{V}}w_{4}^{2}-2w_{4}H_{\lambda}^{2}\left(b_{1}{\hat{V}}w\right)\right]\\
 ={}&   -\int w_{6}H_{\lambda}w_{6}+\left(\frac{\lambda_{s}}{\lambda}+b_{1}\right)\int w_{6}\left[{\hat{V}}w_{4}+H_{\lambda}({\hat{V}}w_{2})+H_{\lambda}^{2}({\hat{V}}w)\right] \\
   & +\int (b_{1}{\hat{V}}) w_{4}(\partial_{t}\bar{V}) w_{2}+\int (b_{1}{\hat{V}})w_{4} H_{\lambda}(\partial_{t}\bar{V}\cdotp  w)-\frac{1}{2}\int\partial_{t}(b_{1}{\hat{V}})\cdotp w_{4}^{2} \\
  & +\int(b_{1}{\hat{V}})w_{4}(\partial_{t}\bar{V})w_{2}+\int(b_{1}{\hat{V}})w_{4}H_{\lambda}(\partial_{t}\bar{V}\cdotp w)-\frac{1}{2}\int\partial_{t}(b_{1}{\hat{V}})\cdotp w_{4}^{2} \\
   & +\int H_{\lambda}^{2}(b_{1}{\hat{V}}w)(\partial_{t}\bar{V})w_{2}+\int H_{\lambda}^{2}(b_{1}{\hat{V}}w)H_{\lambda}(\partial_{t}\bar{V}\cdotp w) \\
   & +\int w_{4}(\partial_{t}\bar{V})H_{\lambda}(b_{1}{\hat{V}}w)+\int w_{4}H_{\lambda}\left(\partial_{t}\bar{V}\cdotp b_{1}{\hat{V}} w\right)-\int w_{4}H_{\lambda}^{2}\left[\partial_{t}(b_{1}{\hat{V}})\cdotp w\right] \\
   & -\int b_{1}{\hat{V}}w_{4}H_{\lambda}^{2}(\lambda^{-2}\mathscr{F}_{\lambda})-\int H_{\lambda}^{2}(b_{1}{\hat{V}}w)H_{\lambda}^{2}(\lambda^{-2}\mathscr{F}_{\lambda})-\int w_{4}H_{\lambda}^{2}(b_{1}{\hat{V}}\lambda^{-2}\mathscr{F}_{\lambda})\\
   &+\int w_{6}H_{\lambda}^{3}(\lambda^{-2}\mathscr{F}_{\lambda}).
   \end{split}
\end{equation}

\noindent Below we estimate (\ref{energy identity}) term by term
to derive (\ref{Lya6}). The estimates use heavily the interpolation
bounds in Appendix B.

\noindent \emph{\uline{Step 3: Lower order quadratic terms.}}

(i) The first term on the RHS of (\ref{energy identity}) is controlled
by \eqref{sub-coerH} and our assumption (\ref{exit3}) of $\tau$
\[
-\int w_{6}H_{\lambda}w_{6}=-\frac{1}{\lambda^{12}}\int\varepsilon_{6}H\varepsilon_{6}\lesssim\frac{1}{\lambda^{12}}(\varepsilon_6,\psi)^2\lesssim \frac{1}{\lambda^{12}}\varsigma^{6}(\varepsilon,\psi)^{2}\lesssim\frac{b_{1}}{\lambda^{12}}\frac{b_{1}^{6}}{|\log b_{1}|^{2}}.
\]

%\[(\varepsilon_6,\psi)=(H^3\varepsilon,\psi)=(\varepsilon,H^3\psi)=-\varsigma^3(\varepsilon,\psi)\]

(ii) Next, since 
\[
|\partial_{y}^{i}{\hat{V}}|\lesssim\frac{1}{\lambda^{4}}\frac{1}{1+y^{4+i}},
\]
we estimate the second term using Cauchy-Schwarz and interpolation
bounds in Appendix B
\begin{align}
\begin{split}
&\left|\left(\frac{\lambda_{s}}{\lambda}+b_{1}\right)\int w_{6}\left[{\hat{V}}w_{4}+H_{\lambda}({\hat{V}}w_{2})+H_{\lambda}^{2}({\hat{V}}w)\right]\right|\\
\lesssim&\  b_{1}^{3+\frac{1}{2}}\cdotp\frac{1}{\lambda^{12}}\int\sum_{i=0}^{4}\frac{|\varepsilon_{6}||\partial_{y}^{i}\varepsilon|}{1+y^{8-i}}\lesssim\frac{b_{1}}{\lambda^{12}}\cdotp\frac{b_{1}^{6}}{|\log b_{1}|^{2}}.
\end{split}
\end{align}

(iii) From the definition of $\bar{V},{\hat{V}}$ and modulation
equation (\ref{Modeq1}), we get 
\[
|\partial_{y}^{i}\partial_{t}\bar{V}|\lesssim\frac{b_{1}}{\lambda^{4}}\cdotp\frac{1}{1+y^{4+i}},\quad  |\partial_{y}^{i}\partial_{t}(b_{1}{\hat{V}})|\lesssim\frac{b_{1}^{2}}{\lambda^{6}}\cdotp\frac{1}{1+y^{4+i}},
\]
so the next three lines in (\ref{energy identity}) is bounded by
\begin{align*}
 &\left|\int (b_{1}{\hat{V}}) w_{4}(\partial_{t}\bar{V}) w_{2}+\int (b_{1}{\hat{V}})w_{4} H_{\lambda}(\partial_{t}\bar{V}\cdotp  w)-\frac{1}{2}\int\partial_{t}(b_{1}{\hat{V}})\cdotp w_{4}^{2}\right. \\
  & +\int(b_{1}{\hat{V}})w_{4}(\partial_{t}\bar{V})w_{2}+\int(b_{1}{\hat{V}})w_{4}H_{\lambda}(\partial_{t}\bar{V}\cdotp w)-\frac{1}{2}\int\partial_{t}(b_{1}{\hat{V}})\cdotp w_{4}^{2} \\
   &\left. +\int H_{\lambda}^{2}(b_{1}{\hat{V}}w)(\partial_{t}\bar{V})w_{2}+\int H_{\lambda}^{2}(b_{1}{\hat{V}}w)H_{\lambda}(\partial_{t}\bar{V}\cdotp w) \right|\\
 \lesssim{} & \frac{b_{1}^{2}}{\lambda^{12}}\cdotp\int\sum_{0\leq i,j\leq4}\frac{|\partial_{y}^{i}\varepsilon||\partial_{y}^{j}\varepsilon|}{1+y^{12-i-j}}\lesssim\frac{b_{1}}{\lambda^{12}}\cdotp\frac{b_{1}^{6}}{|\log b_{1}|^{2}}.
\end{align*}

(iv) The boundary terms are estimated similarly
\[
\left|\int-\frac{1}{2}b_{1}{\hat{V}}w_{4}^{2}-w_{4}H_{\lambda}^{2}(b_{1}{\hat{V}}w)\right|\lesssim\frac{b_{1}}{\lambda^{10}}\int\frac{|\varepsilon_{4}|^{2}}{1+y^{4}}+\sum_{i=0}^{4}\frac{|\varepsilon_{4}||\partial_{y}^{i}\varepsilon|}{1+y^{8-i}}\lesssim\frac{b_{1}^{\frac{1}{2}}}{\lambda^{10}}\cdotp\frac{b_{1}^{6}}{|\log b_{1}|^{2}}.
\]

\noindent \emph{\uline{Step 4: Further use of dissipation.}} Finally
we deal with the $\mathscr{F}$ terms. We need to treat the term $\int w_{6}H_{\lambda}^{3}(\lambda^{-2}\mathscr{F}_{\lambda})$
carefully. Let 
\[
\mathscr{F}=\mathscr{F}^{0}+\mathscr{F}^{1},\quad\mathscr{F}^{0}=-\tilde{\Psi}_{b}-\Mod,\quad\mathscr{F}^{1}=L(\varepsilon)+N(\varepsilon),
\]
then 
\begin{align}\label{w6H3-decomp}
\begin{split}
&\int w_{6}H_{\lambda}^{3}(\lambda^{-2}\mathscr{F}_{\lambda})\\
={}&\int w_{6}H_{\lambda}^{3}\left[{\lambda^{-2}}(\mathscr{F}_{\lambda}^{0}+\mathscr{F}_{\lambda}^{1})\right]\\
={}&\int w_{6}H_{\lambda}^{3}(\lambda^{-2}\mathscr{F}_{\lambda}^{0})\\
&-\int H_{\lambda}^{3}(\lambda^{-2}\mathscr{F}_{\lambda}^{1})\left[\partial_{t}w_{4}+\partial_{t}\bar{V}\cdotp w_{2}+H_{\lambda}(\partial_{t}\bar{V}\cdotp w)-H_{\lambda}^{2}\lambda^{-2}\mathscr{F}\right].
 \end{split}
\end{align}
As before, we integrate by parts and use (\ref{commutator-1})
to compute 
\begin{align}\label{w4H3-decomp}
\begin{split}
&-\int\partial_{t}w_{4}H_{\lambda}^{3}(\lambda^{-2}\mathscr{F}_{\lambda}^{1})\\
={} & -\frac{d}{dt}\int w_{4}H_{\lambda}^{3}(\lambda^{-2}\mathscr{F}_{\lambda}^{1})+\int w_{4}\partial_{t}H_{\lambda}^{3}(\lambda^{-2}\mathscr{F}_{\lambda}^{1})\\
={} & -\frac{d}{dt}\int w_{6}H_{\lambda}^{2}\mathscr{F}_{\lambda}^{1}-\int w_{4}(\partial_{t}\bar{V})H_{\lambda}^{2}\mathscr{F}_{\lambda}^{1}-\int w_{4}H_{\lambda}\left[\partial_{t}\bar{V}H_{\lambda}(\lambda^{-2}\mathscr{F}_{\lambda}^{1})\right]\\
 & -\int w_{4}H_{\lambda}^{2}(\partial_{t}\bar{V}\cdotp\lambda^{-2}\mathscr{F}_{\lambda}^{1})+\int w_{4}H_{\lambda}^{3}\left[\partial_{t}(\lambda^{-2}\mathscr{F}_{\lambda}^{1})\right],
 \end{split}
\end{align}
where by direct calculation 
{
\[
\int w_{4}H_{\lambda}^{3}(\partial_{t}(\lambda^{-2}\mathscr{F}_{\lambda}^{1}))=-\int w_{6}H_{\lambda}^{2}\left(\frac{\lambda_{s}}{\lambda^5}(2\mathscr{F}^{1}+\Lambda\mathscr{F}^{1})_{\lambda}\right).
\]}
We now claim the following bounds: 
\begin{gather}
\sum_{i=0}^{4}\int\left(\1_{y\le1}+\frac{|\log y|^{2}}{y^{12-2i}}\1_{y\ge1}\right)(|\partial_{y}^{i}\mathscr{F}^{0}|^{2}+|\partial_{y}^{i}\mathscr{F}^{1}|^{2})+\int|H^{2}\Lambda\mathscr{F}^{1}|^{2} \nonumber\\
\lesssim{}\frac{b_{1}^{6}}{|\log b_{1}|^{2}}+\frac{\varXi_{6}}{\log M},\label{claim1}
\end{gather}
\begin{equation}
\int|H^{3}\mathscr{F}^{0}|^{2}\lesssim b_{1}^{2}\left[\frac{b_{1}^{6}}{|\log b_{1}|^{2}}+\frac{\varXi_{6}}{\log M}\right],\label{claim2}
\end{equation}
\begin{equation}
\int|H^{2}\mathscr{F}^{1}|^{2}\lesssim b_{1}^{\frac{1}{2}}\cdotp\frac{b_{1}^{6}}{|\log b_{1}|^{2}},\label{claim3}
\end{equation}
\begin{equation}
\int|H^{3}\mathscr{F}^{1}|^{2}\leq C_{\gamma}b_{1}^{2-\gamma}\cdotp\frac{b_{1}^{6}}{|\log b_{1}|^{2}},\quad\gamma>0\text{ can be taken arbitrarily small.}\label{claim4}
\end{equation}
With these bounds we estimate terms concerning $\mathscr{F}$. First,
for those in the next-to-last line of (\ref{energy identity}) 
we have 
\begin{align*}
&\left|-\int b_{1}{\hat{V}}w_{4}H_{\lambda}^{2}(\lambda^{-2}\mathscr{F}_{\lambda})-\int H_{\lambda}^{2}(b_{1}{\hat{V}}w)H_{\lambda}^{2}(\lambda^{-2}\mathscr{F}_{\lambda})-\int w_{4}H_{\lambda}^{2}(b_{1}{\hat{V}}\lambda^{-2}\mathscr{F}_{\lambda})\right|\\
\lesssim{}&\frac{b_{1}}{\lambda^{12}}\int\sum_{0\leq i,j\leq4}\frac{|\partial_{y}^{i}\varepsilon||\partial_{y}^{j}\mathscr{F}|}{1+y^{12-i-j}}\lesssim\frac{b_{1}}{\lambda^{12}}\left[\frac{\varXi_{6}}{\sqrt{\log M}}+\frac{b_{1}^{3}}{|\log b_{1}|}\sqrt{\varXi_{6}}\right].
\end{align*}
To estimate $\int w_{6}H_{\lambda}^{3}(\lambda^{-2}\mathscr{F}_{\lambda})$, we use \eqref{w6H3-decomp} and \eqref{w4H3-decomp}. From (\ref{claim2}) we derive 
\[
\left|\int w_{6}H_{\lambda}^{3}(\lambda^{-2}\mathscr{F}_{\lambda}^{0})\right|\lesssim\frac{1}{\lambda^{12}}\int|\varepsilon_{6}||H^{3}\mathscr{F}^{0}|\lesssim\frac{b_{1}}{\lambda^{12}}\left(\frac{\varXi_{6}}{\sqrt{\log M}}+\frac{b_{1}^{3}}{|\log b_{1}|}\sqrt{\varXi_{6}}\right),
\]
and from (\ref{claim1}) and  (\ref{claim3})
\begin{align*}
 & \left|-\int w_{4}(\partial_{t}\bar{V})H_{\lambda}^{2}\mathscr{F}_{\lambda}^{1}-\int w_{4}H_{\lambda}(\partial_{t}\bar{V}H_{\lambda}(\lambda^{-2}\mathscr{F}_{\lambda}^{1}))-\int w_{4}H_{\lambda}^{2}(\partial_{t}\bar{V}\cdotp\lambda^{-2}\mathscr{F}_{\lambda}^{1})\right.\\
 &\left. -\int w_{6}H_{\lambda}^{2}\left(\frac{\lambda_{s}}{\lambda^5}(2\mathscr{F}^{1}+\Lambda\mathscr{F}^{1})_{\lambda}\right)\right|\\
\lesssim{} &\frac{b_{1}}{\lambda^{12}}\int\sum_{i=0}^{4}\frac{|\varepsilon_{4}||\partial_{y}^{i}\mathscr{F}^{1}|}{1+y^{8-i}}+|\varepsilon_{6}||H^{2}\mathscr{F}^{1}+H^{2}\Lambda\mathscr{F}^{1}|\\
\lesssim{} &\frac{b_{1}}{\lambda^{12}}\left(\frac{\varXi_{6}}{\sqrt{\log M}}+\frac{b_{1}^{3}}{|\log b_{1}|}\sqrt{\varXi_{6}}\right).
\end{align*}
Picking $\gamma=1/2$ in (\ref{claim4}), other non-boundary terms
are controlled by 
\begin{align*}
 & \left|-\int H_{\lambda}^{3}(\lambda^{-2}\mathscr{F}_{\lambda}^{1})(\partial_{t}\bar{V}\cdotp w_{2}+H_{\lambda}(\partial_{t}\bar{V}\cdotp w)-H_{\lambda}^{2}(\lambda^{-2}\mathscr{F}_{\lambda}))\right|\\
={} & \left|-\int H_{\lambda}^{2}(\lambda^{-2}\mathscr{F}_{\lambda}^{1})(H_{\lambda}(\partial_{t}\bar{V}\cdotp w_{2})+H_{\lambda}^{2}(\partial_{t}\bar{V}\cdotp w)-H_{\lambda}^{3}(\lambda^{-2}\mathscr{F}_{\lambda}))\right|\\
\lesssim{} & \frac{1}{\lambda^{12}}\int|H^{2}\mathscr{F}^{1}|\left(b_{1}\sum_{i=0}^{4}\frac{|\partial_{y}^{i}\varepsilon|}{1+y^{8-i}}+|H^{3}\mathscr{F}|\right)
\lesssim  \frac{b_{1}}{\lambda^{12}}\cdotp\frac{b_{1}^{6}}{|\log b_{1}|^{2}}.
\end{align*}
We can also estimate the new boundary term as
\[
\left|\int w_{6}H_{\lambda}^{2}(\lambda^{-2}\mathscr{F}_{\lambda}^{1})\right|\lesssim\frac{1}{\lambda^{10}}\int|\varepsilon_{6}||H^{2}\mathscr{F}^{1}|\lesssim\frac{b_{1}^{\frac{1}{4}}}{\lambda^{10}}\cdotp\frac{b_{1}^{6}}{|\log b_{1}|^{2}}.
\]
The proposition follows from these estimates. Now we turn to the proof
of the claim.

\noindent \emph{\uline{Step 5: $\tilde{\Psi}_{b}$ terms.}}
The contribution of $\tilde{\Psi}_{b}$
in (\ref{claim1}), (\ref{claim2}) is estimated in (\ref{Psi2}) and (\ref{Psi3}).

\noindent \emph{\uline{Step 6: Mod terms.}} Recall (\ref{defMod}) that 
\begin{align*}
\Mod=&\ -\left(\frac{\lambda_{s}}{\lambda}+b_{1}\right)\Lambda\tilde{Q}_{b}+[(b_{1})_{s}+b_{1}^{2}(1+c_{b_1})-b_{2}]\left[\tilde{T}_{1}+\chi_{B_{1}}\sum_{j=2}^{4}\frac{\partial S_{j}}{\partial b_{1}}\right]\\
&\ +[(b_{2})_{s}+b_{1}b_{2}(3+c_{b_1})]\left[\tilde{T}_{2}+\chi_{B_{1}}\sum_{j=3}^{4}\frac{\partial S_{j}}{\partial b_{2}}\right].
\end{align*}

\noindent \emph{Proof of (\ref{claim2}) for $\Mod$}. 
Thanks to modulation
equations (\ref{Modeq1}), (\ref{Modeq2}), it suffices to show 
\begin{equation}
\int|H^{3}\Lambda\tilde{Q}_{b}|^{2}+\int\left|H^{3}(\tilde{T}_{1}+\chi_{B_{1}}\sum_{j=2}^{4}\frac{\partial S_{j}}{\partial b_{1}})\right|^{2}\lesssim b_{1}^{2}|\log b_{1}|^{C},\label{M1}
\end{equation}
\begin{equation}
\int\left|H^{3}(\tilde{T}_{2}+\chi_{B_{1}}\sum_{j=3}^{4}\frac{\partial S_{j}}{\partial b_{2}})\right|^{2}\lesssim b_{1}^{2}.\label{M2}
\end{equation}
For (\ref{M1}), since $H^{3}\Lambda Q=H^{3}T_{1}=0$, we have
\begin{align*}
&\int|H^{3}\Lambda\tilde{Q_{b}}|^{2}+\int|H^{3}(\tilde{T}_{1}+\chi_{B_{1}}\sum_{j=2}^{4}\frac{\partial S_{j}}{\partial b_{1}})|^{2} \\
\lesssim{}&\ \int_{B_{1}\le y\le2B_{1}}|H^{3}\tilde{T}_{1}|^{2}+b_{1}^{2}|\log b_{1}|^{C}\int_{y\leq2B_{1}}\frac{1}{1+y^{12}}\\
  \lesssim{}&\ \int_{B_{1}\leq y\leq2B_{1}}\frac{|\log y|^{2}}{y^{12}}+b_{1}^{2}|\log b_{1}|^{C} \lesssim b_{1}^{2}|\log b_{1}|^{C}.
\end{align*}
To prove (\ref{M2}), we use $H^{3}T_{2}=0$ to estimate 
\begin{equation}
\int|H^{3}\tilde{T}_{2}|^{2}\lesssim\int_{B_{1}\leq y\leq2B_{1}}\frac{|\log y|^{2}}{y^{8}}\lesssim b_{1}^{2}.\label{M2-1}
\end{equation}
Note that (\ref{S4}) implies that 
\begin{equation}
\int\left|H^{3}(\chi_{B_{1}}\frac{\partial S_{4}}{\partial b_{2}})\right|^{2}\lesssim\frac{b_{1}^{2}}{|\log b_{1}|^{2}}\int_{y\leq2B_{1}}\frac{1}{1+y^{4}}\lesssim b_{1}^{2}.\label{M2-2}
\end{equation}
Finally we deal with the term $H^{3}\partial_{b_{2}}S_{3}$. From
(\ref{defS3}), (\ref{defS2}), (\ref{HTheta3}) 
\begin{align*}
H^{3}\partial_{b_{2}}S_{3} & =O(b_{1})(H\Theta_{2}+b_{1}\partial_{b_{1}}H\Theta_{2})+O(b_{1})\left(\1_{y\leq1}+\frac{|\log y|^{C}}{y^{4}}\1_{y\geq1}\right)
\end{align*}
where by the definition of $\Theta_{2}$ 
\[
H\Theta_{2}=H(\Lambda T_{1}-T_{1}+\varSigma_{b_1})=-\Lambda^{2}Q-\Lambda Q+H_{b}+O\left(\frac{|\log y|^{C}}{y^{4}}\1_{y\geq1}\right).
\]
Now we estimate from (\ref{Q}) and (\ref{defSigmab}) 
\[
|-\Lambda^{2}Q-\Lambda Q|\lesssim\frac{1}{1+y^{4}},\quad  |H\varSigma_{b_1}|+|b_{1}\partial_{b_{1}}H\varSigma_{b_1}|\lesssim\frac{1}{|\log b_{1}|(1+y^{2})}\1_{y\leq6B_{0}}
\]
and hence obtain 
\[
|H^{3}\partial_{b_{2}}S_{3}|\lesssim b_{1}\left(\1_{y\leq1}+\frac{1}{|\log b_{1}|y^{2}}\1_{1\le y\leq6B_{0}}+\frac{|\log b_{1}|^{C}}{y^{4}}\1_{y\geq6B_{0}}\right).
\]
Together with (\ref{S3}) we conclude 
\begin{equation}
\int\left|H^{3}(\chi_{B_{1}}\frac{\partial S_{3}}{\partial b_{2}})\right|^{2}\lesssim\int_{y\leq B_{1}}\left|H^{3}\frac{\partial S_{3}}{\partial b_{2}}\right|^{2}+\frac{1}{|\log b_{1}|^{2}}\int_{B_{1}\leq y\leq2B_{1}}\frac{1}{y^{8}}\lesssim b_{1}^{2}.\label{M2-3}
\end{equation}
Combining (\ref{M2-1}), (\ref{M2-2}), and (\ref{M2-3}), we derive (\ref{M2})
and finish the proof of (\ref{claim2}).

\noindent \emph{Proof of (\ref{claim1}) for $\Mod$}. Using rough
bounds for $S_{j}$ we simply estimate for $0\le i\le4$ 
\begin{align}
\begin{split}
&\int\frac{1+|\log y|^{2}}{1+y^{12-2i}}\left[|\partial_{y}^{i}\Lambda\tilde{Q}_{b}|^{2}+\left|\partial_{y}^{i}(\tilde{T}_{1}+\chi_{B_{1}}\sum_{j=2}^{4}\frac{\partial S_{j}}{\partial b_{1}})\right|^{2}+\left|\partial_{y}^{i}(\tilde{T}_{2}+\chi_{B_{1}}\sum_{j=3}^{4}\frac{\partial S_{j}}{\partial b_{2}})\right|^{2}\right]\\
\hspace{-1em}\lesssim{}&\int\frac{1+|\log y|^{C}}{1+y^{8}}\lesssim1
\end{split}
\end{align}
which combined with modulation equations implies the desired result:
\begin{equation}
\sum_{i=0}^{4}\int\frac{1+|\log y|^{2}}{1+y^{12-2i}}|\partial_{y}^{i}\Mod|^{2}\lesssim\frac{b_{1}^{6}}{|\log b_{1}|^{2}}+\frac{\varXi_{6}}{\log M}.
\end{equation}

\noindent \emph{\uline{Step 7: $L(\varepsilon)$ terms.}} Recall that $L(\varepsilon)=3(\tilde{Q}_{b}-Q^{2})\varepsilon$.
We have 
\begin{equation}
|\partial_{y}^{i}(\tilde{Q}_{b}^{2}-Q^{2})|\lesssim b_{1}\left(\1_{y\le1}+\frac{|\log y|^{C}}{y^{2+i}}\1_{1\leq y\leq2B_{1}}\right).
\end{equation}
Using Leibniz rule we estimate 
\begin{equation}
\begin{split}
 & \sum_{i=0}^{4}\int\left(\1_{y\le1}+\frac{|\log y|^{2}}{y^{12-2i}}\1_{y\ge1}\right)|\partial_{y}^{i}L(\varepsilon)|^{2}+\int|H^{2}\Lambda L(\varepsilon)|^{2}\\
\lesssim{} &\  b_{1}^{2}\sum_{i=0}^{5}\left(\int_{y\leq1}|\partial_{y}^{i}\varepsilon|^{2}+\int_{1\leq y\leq2B_{1}}\frac{|\log y|^{C}}{y^{12-2i}}|\partial_{y}^{i}\varepsilon|^{2}\right)\\
\lesssim{} &\  b_{1}^{2}(1+|\log b_{1}|^{C})\varXi_{6}\lesssim\frac{b_{1}^{6}}{|\log b_{1}|^{2}},
\end{split}
\end{equation}
\begin{equation}
\begin{split}
\int|H^{2}L(\varepsilon)|^{2} & \lesssim b_{1}^{2}\sum_{i=0}^{4}\left(\int_{y\leq1}|\partial_{y}^{i}\varepsilon|^{2}+\int_{1\leq y\leq2B_{1}}\frac{|\log y|^{C}}{y^{12-2i}}|\partial_{y}^{i}\varepsilon|^{2}\right)\\
 & \lesssim b_{1}^{2}(1+|\log b_{1}|^{C})\varXi_{6}\lesssim b_{1}\cdotp\frac{b_{1}^{6}}{|\log b_{1}|^{2}},
\end{split}
\end{equation}
\begin{equation}
\begin{split}
\intop|H^{3}L(\varepsilon)|^{2} & \lesssim b_{1}^{2}\sum_{i=0}^{6}\left(\int_{y\le1}|\partial_{y}^{i}\varepsilon|^{2}+\int_{1\leq y\leq2B_{1}}\frac{1+|\log y|^{C}}{y^{16-2i}}|\partial_{y}^{i}\varepsilon|^{2}\right)\label{H3L}\\
 & \lesssim b_{1}^{2}\varXi_{6}\le C_{\gamma}b_{1}^{2-\gamma}\cdotp\frac{b_{1}^{6}}{|\log b_{1}|^{2}},
\end{split}
\end{equation}
which bound the $L(\varepsilon)$ terms in (\ref{claim1}), (\ref{claim3})
and (\ref{claim4}) respectively.

\noindent \emph{\uline{Step 8: $N(\varepsilon)$ terms.}} Recall that $N(\varepsilon)=3\tilde{Q}_{b}\cdotp\varepsilon^{2}+\varepsilon^{3}$.
We split the integral into two parts:

\noindent \emph{Control for $y\leq1$.} We know $|\partial_{y}^{i}\tilde{Q}_{b}|\lesssim1$
and thus 
\begin{align}
\begin{split}
\int_{y\leq1}|H^{2}(\tilde{Q}_{b}\cdotp\varepsilon^{2})|^{2} & \lesssim\sum_{i+j\leq4}\int_{y\leq1}|\partial_{y}^{i}\varepsilon|^{2}|\partial_{y}^{j}\varepsilon|^{2}\\& \lesssim\sum_{i+j\leq4}\int_{y\leq1}|\partial_{y}^{i}\varepsilon|^{2}\cdotp\Vert\partial_{y}^{j}\varepsilon\Vert_{L^{\infty}(y\leq1)}^{2}\\ &\lesssim\varXi_{6}^{2}\lesssim b_{1}\cdotp\frac{b_{1}^{6}}{|\log b_{1}|^{2}},
 \end{split}
\end{align}
where we assume $j\leq i$ (hence $j\le2$) and use (\ref{pointwise2}).
In the following we assume $i\geq j\geq k$ and derive 
\begin{align}
\begin{split}
\int_{y\le1}|H^{2}\varepsilon^{3}|^{2} & \lesssim\sum_{i+j+k\le4}\int_{y\leq1}|\partial_{y}^{i}\varepsilon|^{2}|\partial_{y}^{j}\varepsilon|^{2}|\partial_{y}^{k}\varepsilon|^{2}\\
 & \lesssim\sum_{i+j+k\le4}\int_{y\leq1}|\partial_{y}^{i}\varepsilon|^{2}\cdotp\|\partial_{y}^{j}\varepsilon\|_{L^{\infty}(y\le1)}^{2}\|\partial_{y}^{k}\varepsilon\|_{L^{\infty}(y\le1)}^{2}\\ & \lesssim\varXi_{6}^{3}\lesssim b_{1}\cdotp\frac{b_{1}^{6}}{|\log b_{1}|^{2}}.
 \end{split}
\end{align}

\noindent Similar calculations imply 
\begin{equation}
\sum_{i=0}^{4}\int_{y\leq1}|\partial_{y}^{i}N(\varepsilon)|^{2}+\int_{y\leq1}|H^{2}\Lambda N(\varepsilon)|^{2}\lesssim\frac{b_{1}^{6}}{|\log b_{1}|^{2}},\ 
\end{equation}
\begin{equation}
\int_{y\le1}|H^{3}N(\varepsilon)|^{2}\lesssim b_{1}^{2}\cdotp\frac{b_{1}^{6}}{|\log b_{1}|^{2}}.\label{H3N1}
\end{equation}

\noindent These bounds verify the $y\le1$ part of (\ref{claim1}),
(\ref{claim3}) and (\ref{claim4}).

\noindent \emph{Control for $y\ge1$}. Now we have 
\begin{equation}
|\partial_{y}^{i}\tilde{Q}_{b}|\lesssim\frac{1}{y^{2+i}}+b_{1}\frac{1+|\log y|^{C}}{y^{i}}\cdotp\1_{y\leq2B_{1}}\lesssim\frac{1+|\log              y|^{C}}{y^{2+i}}.
\end{equation}
\emph{Proof of (\ref{claim3}}) for $N(\varepsilon)$, $y\ge1.$ First
for $H^{2}(\tilde{Q}_{b}\cdotp\varepsilon^{2})$ the bound above yields
\begin{align}
\int_{y\geq1}|H^{2}(\tilde{Q}_{b}\cdotp\varepsilon^{2})|^{2} & \lesssim\sum_{i+j\leq4}\int_{y\geq1}\frac{1+|\log y|^{C}}{y^{12-2i-2j}}|\partial_{y}^{i}\varepsilon|^{2}|\partial_{y}^{j}\varepsilon|^{2}\label{H2Qepsilon2}\\
 & \lesssim\sum_{i+j\leq4}\int_{y\geq1}\frac{|\partial_{y}^{i}\varepsilon|^{2}}{y^{8-2i}(1+|\log y|^{2})}\cdotp||y^{j-2}\partial_{y}^{j}\varepsilon(1+|\log y|^{C})||_{L^{\infty}(y\geq1)}^{2}\nonumber \\
 & \lesssim|\log b_{1}|^{C}\varXi_{4}^{2}\lesssim b_{1}\cdotp\frac{b_{1}^{6}}{|\log b_{1}|^{2}}.\nonumber 
\end{align}
Next we treat the term $H^{2}(\varepsilon^{3})$. Since $|\partial_{y}^{i}V|\lesssim\frac{1}{1+y^{4+i}}$,
we get
\begin{equation}
|H^{2}(\varepsilon^{3})|^{2}\lesssim\sum_{i+j+k\le4}\frac{|\partial_{y}^{i}\varepsilon|^{2}|\partial_{y}^{j}\varepsilon|^{2}|\partial_{y}^{k}\varepsilon|^{2}}{y^{8-2i-2j-2k}}.\label{H2epsilon3-0}
\end{equation}
Again assume $i\geq j\geq k$. If $(i,j,k)=(2,2,0)$ we estimate 
\begin{equation}
\int_{y\geq1}|\partial_{y}^{2}\varepsilon|^{2}|\partial_{y}^{2}\varepsilon|^{2}|\varepsilon|^{2}\lesssim\int_{y\geq1}|\partial_{y}^{2}\varepsilon|^{2}\cdotp||\partial_{y}^{2}\varepsilon||_{L^{\infty}(y\geq1)}^{2}\cdotp||\varepsilon||_{L^{\infty}(y\geq1)}^{2},\label{H2epsilon3-1}
\end{equation}
otherwise $k\le j\le1$ and 
\begin{align}\label{H2epsilon3-2}
\begin{split}
&\int_{y\geq1}\frac{|\partial_{y}^{i}\varepsilon|^{2}|\partial_{y}^{j}\varepsilon|^{2}|\partial_{y}^{k}\varepsilon|^{2}}{y^{8-2i-2j-2k}}\\
\lesssim{}&\int_{y\geq1}\frac{|\partial_{y}^{i}\varepsilon|^{2}}{y^{8-2i}(1+|\log y|^{2})}\cdotp||y^{j}\partial_{y}^{j}\varepsilon||_{L^{\infty}(y\geq1)}^{2}\cdotp||y^{k}\partial_{y}^{k}\varepsilon(1+|\log y|^{C})||_{L^{\infty}(y\geq1)}^{2}.
\end{split}
\end{align}
Hence we conclude with the help of Appendix B that 
\begin{equation}
\int_{y\ge1}|H^{2}(\varepsilon^{3})|^{2}\lesssim|\log b_{1}|^{C}\varXi_{2}^{2}\varXi_{4}\lesssim b_{1}^{\frac{1}{2}}\cdotp\frac{b_{1}^{6}}{|\log b_{1}|^{2}}
\end{equation}
which is where the exponent $1/2$ in (\ref{claim3}) comes from.
This completes the proof of (\ref{claim3}).

\noindent \emph{Proof of (\ref{claim1}}) for $N(\varepsilon),y\ge1.$
Note that the term involving $\partial_{y}^{5}\varepsilon$ in $\int_{y\geq1}|H^{2}\Lambda(\tilde{Q}_{b}\cdotp\varepsilon^{2})|^{2}$
is bounded by 
\begin{equation*}
\int_{y\geq1}\frac{1+|\log y|^{C}}{y^{2}}|\partial_{y}^{5}\varepsilon|^{2}|\varepsilon|^{2}\lesssim\int_{y\geq1}\frac{|\partial_{y}^{5}\varepsilon|^{2}}{y^{2}(1+|\log y|^{2})}\cdotp||\varepsilon(1+|\log y|^{C})||_{L^{\infty}(y\geq1)}^{2}\lesssim\frac{b_{1}^{6}}{|\log b_{1}|^{2}}
\end{equation*}
and in $\int_{y\ge1}|H^{2}\Lambda(\varepsilon^{3})|^{2}$ by 
\begin{equation*}
\int_{y\geq1}|\partial_{y}^{5}\varepsilon|^{2}|\varepsilon|^{4}\lesssim\int_{y\geq1}\frac{|\partial_{y}^{5}\varepsilon|^{2}}{y^{2}(1+|\log y|^{2})}\cdotp||\varepsilon(1+|\log y|^{C})||_{L^{\infty}(y\geq1)}^{2}\cdotp||y\varepsilon||_{L^{\infty}(y\geq1)}^{2}\lesssim\frac{b_{1}^{6}}{|\log b_{1}|^{2}},
\end{equation*}
where we use (\ref{Hardy}) and smallness of energy (\ref{exit1}).
Other lower order terms are estimated as before in (\ref{H2Qepsilon2}), (\ref{H2epsilon3-1}) and (\ref{H2epsilon3-2})
.

\noindent \emph{Proof of (\ref{claim4}}) for $N(\varepsilon)$, $y\ge1$.
For $H^{3}(\tilde{Q}_{b}\cdotp\varepsilon^{2})$ we expand 
\begin{equation}
|H^{3}(\tilde{Q}_{b}\cdotp\varepsilon^{2})|^{2}\lesssim\sum_{i+j\leq6}\frac{1+|\log y|^{C}}{y^{16-2i-2j}}|\partial_{y}^{i}\varepsilon|^{2}|\partial_{y}^{j}\varepsilon|^{2},
\end{equation}
assume $i\ge j$, and then estimate 
\begin{align*}
&\int_{y\geq1}\frac{1+|\log y|^{C}}{y^{16-2i-2j}}|\partial_{y}^{i}\varepsilon|^{2}|\partial_{y}^{j}\varepsilon|^{2}\\
\lesssim{}&\begin{dcases}
\int_{y\geq1}\frac{1+|\log y|^{C}}{y^{6}}|\partial_{y}^{3}\varepsilon|^{2}\cdotp\left\|\frac{y\partial_{y}^{3}\varepsilon}{1+\log y}\right\|_{L^{\infty}(y\geq1)}^{2}, & i=j=3,\\
\int_{y\geq1}\frac{|\partial_{y}^{i}\varepsilon|^{2}}{y^{12-2i}(1+|\log y|^{2})}\cdotp||y^{j-2}\partial_{y}^{j}\varepsilon(1+|\log y|^{C})||_{L^{\infty}(y\geq1)}^{2}, & \text{otherwise}.
\end{dcases}
\end{align*}
In both cases, we have 
\begin{equation}
    \int_{y\geq1}\frac{1+|\log y|^{C}}{y^{16-2i-2j}}|\partial_{y}^{i}\varepsilon|^{2}|\partial_{y}^{j}\varepsilon|^{2}\lesssim |\log b_1|^C \varXi_{4}\varXi_{6}\lesssim  b_1^2\frac{b_{1}^{6}}{|\log b_{1}|^{2}}.
\end{equation}
Besides, we need to control the term $H^{3}(\varepsilon^{3})$. First
as in (\ref{H2epsilon3-0}) 
\begin{equation}
H^{3}(\varepsilon^{3})\lesssim\sum_{i+j+k\le6}\frac{|\partial_{y}^{i}\varepsilon|^{2}|\partial_{y}^{j}\varepsilon|^{2}|\partial_{y}^{k}\varepsilon|^{2}}{y^{12-2i-2j-2k}}.
\end{equation}
Assuming $i\ge j\ge k$, we divide the summand into several cases
and summarize the estimates as follows: 
\begin{align*}
    &\int_{y\geq1}\frac{|\partial_{y}^{i}\varepsilon|^{2}|\partial_{y}^{j}\varepsilon|^{2}|\partial_{y}^{k}\varepsilon|^{2}}{y^{12-2i-2j-2k}}\lesssim\\
    &\begin{cases}
\int_{y\geq1}\frac{|\partial_{y}^{i}\varepsilon|^{2}}{y^{12-2i}(1+|\log y|^{2})}\cdotp||y^{j}\partial_{y}^{j}\varepsilon||_{L^{\infty}(y\geq1)}^{2}\cdotp||y^{k}\partial_{y}^{k}\varepsilon(1+|\log y|^{C})||_{L^{\infty}(y\geq1)}^{2}, &j\le1,\\[1em]
\int_{y\geq1}\frac{|\partial_{y}^{i}\varepsilon|}{y^{8-2i}(1+|\log y|^{2})}\cdotp||\partial_{y}^{2}\varepsilon||_{L^{\infty}(y\geq1)}^{2}\cdotp||y^{k}\partial_{y}^{k}\varepsilon(1+|\log y|^{C})||_{L^{\infty}(y\geq1)}^{2}, &j=2,k\le1,\\[1em]
\int_{y\geq1}|\partial_{y}^{2}\varepsilon|^{2}\cdotp||\partial_{y}^{2}\varepsilon||_{L^{\infty}(y\geq1)}^{2}\cdotp||\partial_{y}^{2}\varepsilon||_{L^{\infty}(y\geq1)}^{2}, &(i,j,k)=(2,2,2),\\[1em]
\int_{y\geq1}\frac{|\partial_{y}^{3}\varepsilon|^{2}}{y^{2}(1+|\log y|^{2})}\cdotp||\frac{y\partial_{y}^{3}\varepsilon}{1+\log y}||_{L^{\infty}(y\geq1)}^{2}\cdotp||\varepsilon(1+|\log y|^{2})||_{L^{\infty}(y\geq1)}^{2}, &(i,j,k)=(3,3,0).
\end{cases}
\end{align*}
Using interpolation bounds in Appendix B, we conclude 
\begin{equation}
\int_{y\ge1}|H^{3}N(\varepsilon)|^{2}\lesssim b_{1}^{2}\frac{b_{1}^{6}}{|\log b_{1}|^{2}}\label{H3N2}
\end{equation}
and hence prove (\ref{claim4}). As a result, we finish the whole
proof of Proposition \ref{prop:Lya6}.
\end{proof}
Similarly we have Lyapounov monotonicity for $\varXi_{4}$ and $\varXi_{2}$: 
\begin{prop}
For some constant $C$ independent of $M$ and $K$, there holds  
\begin{equation}
\frac{d}{dt}\left(\frac{1}{\lambda^{6}}\varXi_{4}\right)\leq C\frac{b_{1}^{5}}{\lambda^{8}},\label{Lya4}
\end{equation}
\begin{equation}
\frac{d}{dt}\left(\frac{1}{\lambda^{2}}\varXi_{2}\right)\leq C\frac{b_{1}^{\frac{7}{3}}}{\lambda^{4}}.\label{Lya2}
\end{equation}
\end{prop}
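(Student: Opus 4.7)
The plan is to mirror the proof of Proposition~\ref{prop:Lya6} at lower derivative orders. In the rescaled variables $w(t,\cdot)=(\varepsilon(s,\cdot))_\lambda$ and $w_{2k}=H_\lambda^k w$, one has $\int w_{2k}^2 r^3\,dr = \lambda^{-4k+2}\varXi_{2k}$, so (\ref{Lya4}) and (\ref{Lya2}) amount to monotonicity statements for $\int w_4^2$ and $\int w_2^2$. From the commutator identity (\ref{commutator-1}) one derives the evolution equations
\begin{align*}
\partial_t w_2 + H_\lambda w_2 &= -\partial_t\bar V\cdot w + H_\lambda(\lambda^{-2}\mathscr{F}_\lambda), \\
\partial_t w_4 + H_\lambda w_4 &= -\partial_t\bar V\cdot w_2 - H_\lambda(\partial_t\bar V\cdot w) + H_\lambda^2(\lambda^{-2}\mathscr{F}_\lambda).
\end{align*}
I then multiply by $w_2$ and $w_4$, integrate in $r^3\,dr$, and process the $\mathscr{F}$--terms via the splitting $\mathscr{F}=\mathscr{F}^0+\mathscr{F}^1$ with integrations by parts as in (\ref{w6H3-decomp})--(\ref{w4H3-decomp}), using one fewer commutator step than at the $\varXi_6$ level. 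This produces energy identities of the same structure as (\ref{energy identity}), but with strictly fewer terms.

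\textbf{Estimating the terms.} The dissipative term satisfies
\[
-\int w_{2k}H_\lambda w_{2k} = -\lambda^{-4k}\int \varepsilon_{2k}H\varepsilon_{2k} \lesssim \lambda^{-4k}(\varepsilon_{2k},\psi)^2 \lesssim \lambda^{-4k}\varsigma^{2k}\tau^2 \lesssim \frac{b_1^{7}}{\lambda^{4k}|\log b_1|^2},
\]
by sub-coercivity of $H$ combined with the bootstrap bound $|\tau|\lesssim b_1^{7/2}/|\log b_1|$; this is far smaller than either target. The remaining quadratic terms involving $\partial_t\bar V$ and $\partial_t(b_1\hat V)$ are handled with the bounds $|\partial_y^i\partial_t\bar V|\lesssim b_1\lambda^{-4}(1+y)^{-4-i}$ and $|\partial_y^i\partial_t(b_1\hat V)|\lesssim b_1^2\lambda^{-6}(1+y)^{-4-i}$, combined with the weighted interpolation bounds of Appendix~B. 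The $\mathscr{F}^0=-\tilde\Psi_b-\Mod$ contribution is estimated using (\ref{Psi1'})--(\ref{Psi3'}) and the modulation laws (\ref{Modeq1})--(\ref{Modeq2'}) at the orders $H$ and $H^2$ only, while for $\mathscr{F}^1=L(\varepsilon)+N(\varepsilon)$ I establish analogs of the claims (\ref{claim1})--(\ref{claim3}) at levels $0\le i\le 2$ and $H,H^2$ by exactly the same interpolation arguments as in Steps 7--8 of the previous proof.

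\textbf{Source of the exponents and main obstacle.} The RHS of (\ref{Lya4}) and (\ref{Lya2}) can be read as $(b_1/\lambda^2)\cdot\mathcal{E}_{2k}$, where $\mathcal{E}_{2k}$ is the bootstrap size of $\varXi_{2k}/\lambda^{4k-2}$ ($\varXi_4\sim b_1^4$ and $\varXi_2\sim b_1^{4/3}$ up to logarithmic losses). Under the change of variable $ds=dt/\lambda^2$ this is precisely the rate needed to close the bootstrap in Section~\ref{CLOSING THE BOOTSTRAP AND PROOF OF THEOREM 1.1}. The main obstacle is the nonlinear $N(\varepsilon)$ control at the $\varXi_2$ level: the only available bound on $\varXi_1$ is $\lesssim\sqrt{b_1(0)}$, which is not of $b_1$--type, so every quantitative power of smallness has to be extracted from $\varXi_2,\varXi_4,\varXi_6$. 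This forces a careful distribution of derivatives, splitting $\{y\le1\}$ from $\{y\ge1\}$ exactly as in Step~8 of Proposition~\ref{prop:Lya6}, with the logarithmic losses absorbed into the margin between the bootstrap bound and the target via the extra factor of $b_1$ supplied by the $\partial_t\bar V$ and modulation contributions.
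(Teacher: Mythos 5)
Your overall strategy---mirroring the Lyapounov computation of Proposition~\ref{prop:Lya6} at the $H$ and $H^2$ levels---is correct, and your evolution equations for $w_2,w_4$, the formula $\int w_{2k}^2 = \lambda^{-4k+2}\varXi_{2k}$, and the dissipative-term estimate via sub-coercivity and the $\tau$ bootstrap bound are all sound and match the paper. The exponent bookkeeping in your final paragraph is also the right heuristic.

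Where you deviate is the middle step: you propose to replay the integration-by-parts machinery of (\ref{w6H3-decomp})--(\ref{w4H3-decomp}), replacing $w_{2k}$ by $\partial_t w_{2k-2}$ and tracking a boundary-term correction to the Lyapounov functional. The paper explicitly avoids this: for $\varXi_4$ and $\varXi_2$ the authors remark that ``unlike in the proof of (\ref{Lya6}), here we could estimate these terms directly.'' They simply bound each term in
\[
\tfrac{1}{2}\tfrac{d}{dt}\int w_4^2 = \int w_4\bigl(-H_\lambda w_4 - \partial_t\bar V w_2 - H_\lambda(\partial_t\bar V w) + H_\lambda^2(\lambda^{-2}\mathscr{F}_\lambda)\bigr)
\]
by Cauchy--Schwarz and the interpolation bounds, reusing (\ref{claim3}) for the $\mathscr{F}$-term at the $\varXi_4$ level and establishing a single direct bound $\int|H\mathscr{F}|^2\lesssim b_1^{11/3}$ at the $\varXi_2$ level. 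The reason this works is that the targets $b_1^5/\lambda^8$ and $b_1^{7/3}/\lambda^4$ have substantial slack relative to the bootstrap sizes of $\varXi_4,\varXi_2$, whereas the $\varXi_6$ estimate is sharp and genuinely requires the boundary-term structure. Your route would instead produce a modified functional $\frac{1}{\lambda^6}(\varXi_4 + \text{boundary term})$, and you would then need an extra argument to show the time derivative of the correction is itself $O(b_1^5/\lambda^8)$ in order to recover the proposition as stated (which has no correction). This is doable but is strictly more work than the paper's direct estimate, and your proposal does not address how to discharge those boundary terms. Otherwise, the plan---including the treatment of $\partial_t\bar V$ contributions, the use of the interpolation bounds, the splitting into $y\le1$ and $y\ge1$ for $N(\varepsilon)$, and the observation that logarithmic losses must be absorbed by the extra power of $b_1$---accurately describes the estimates that actually close the proof.
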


\begin{proof}
To prove (\ref{Lya4}), we compute 
\begin{equation}
\frac{1}{2}\frac{d}{dt}\int w_{4}^{2}=\int w_{4}(-H_{\lambda}w_{4}-\partial_{t}\bar{V}w_{2}-H_{\lambda}(\partial_{t}\bar{V}w)+H_{\lambda}^{2}({\lambda}^{-2}F_{\lambda})).
\end{equation}
Unlike in the proof of (\ref{Lya6}), here we could estimate these
terms directly. The first term is estimated from (\ref{exit3}) 
\begin{equation}
-\int w_{4}H_{\lambda}w_{4}=-\frac{1}{\lambda^{8}}\int\varepsilon_{4}H\varepsilon_{4}\lesssim\frac{1}{\lambda^{8}}(\varepsilon,\psi)^{2}\lesssim\frac{b_{1}^{5}}{\lambda^{8}},
\end{equation}
and the next two follow from $|\partial_{t}\bar{V}|\lesssim\frac{b_{1}}{\lambda^{4}}\cdotp\frac{1}{1+y^{4}}$
\begin{equation}
\left|-\int w_{4}\partial_{t}\bar{V}w_{2}-\int w_{4}H_{\lambda}(\partial_{t}\bar{V}w)\right| \lesssim\frac{b_{1}}{\lambda^{8}}\int\frac{|\varepsilon_{2}||\varepsilon_{4}|+|\varepsilon||\varepsilon_{6}|}{1+y^{4}}\lesssim\frac{b_{1}^{5}}{\lambda^{8}}.
\end{equation}
The last term is controlled thanks to (\ref{claim3}) 
\begin{equation}
\left|\int w_{4}H_{\lambda}^{2}(\lambda^{-2}\mathscr{F}_{\lambda})\right|\lesssim\frac{1}{\lambda^{8}}\int|\varepsilon_{4}||H^{2}\mathscr{F}|\lesssim\frac{b_{1}^{5}}{\lambda^{8}}.
\end{equation}
hence (\ref{Lya4}) is proved.

Now we turn to the proof of (\ref{Lya2}). Similarly 
\begin{equation}
\frac{1}{2}\frac{d}{dt}\int w_{2}^{2}=\int w_{2}[-H_{\lambda}w_{2}-\partial_{t}\bar{V}w+H_{\lambda}({\lambda}^{-2}F_{\lambda})].
\end{equation}
and it is not hard to show 
\begin{equation}
-\int w_{2}H_{\lambda}w_{2}\lesssim\frac{b_{1}^{\frac{7}{3}}}{\lambda^{4}},\quad  \text{and}\quad  \left|-\int w_{2}\partial_{t}\bar{V}w\right|\lesssim\frac{b_{1}^{\frac{7}{3}}}{\lambda^{4}}.
\end{equation}
To estimate the last term we need to bound $\int|H\mathscr{F}|^{2}$.
Recall that $\mathscr{F}=-\tilde{\Psi}-\Mod-L(\varepsilon)+N(\varepsilon)$.
By (\ref{Psi1'}) we have 
\[
\int|H\tilde{\Psi}|^{2}\lesssim b_{1}^{4}|\log b_{1}|^{C}\lesssim b_{1}^{\frac{11}{3}}.
\]
For the $\Mod$ term, since 
\[
|H\Lambda\tilde{Q}_{b}|^{2}+\left|H\left(\tilde{T}_{1}+\chi_{B_{1}}\sum_{j=2}^{4}\frac{\partial S_{j}}{\partial b_{1}}\right)\right|^{2}+\left|H\left(\tilde{T}_{2}+\chi_{B_{1}}\sum_{j=3}^{4}\frac{\partial S_{j}}{\partial b_{2}}\right)\right|^{2}\lesssim|\log b_{1}|^{C}\1_{y\le2B_{1}},
\]
we could derive
\[
\int|H(\Mod)|^{2}\lesssim b_{1}^{4}|\log b_{1}|^{C}\lesssim b_{1}^{\frac{11}{3}}.
\]
For the error term, we directly estimate them, i.e. for $L(\varepsilon)$
we have 
\[
\int|HL(\varepsilon)|^{2}\lesssim b_{1}^{2}\left(\sum_{i=0}^{2}\int_{y\leq1}|\partial_{y}^{i}\varepsilon|^{2}+\sum_{i=0}^{2}\int_{y\geq1}\frac{1+|\log y|^{C}}{y^{8-2i}}|\partial_{y}^{i}\varepsilon|^{2}\right)\lesssim b_{1}^{\frac{11}{3}}.
\]
For $N(\varepsilon)=3\tilde{Q}_{b}\cdotp\varepsilon^{2}+\varepsilon^{3}$ we have
\[\begin{split}
\int|H(\tilde{Q}_{b}\,\varepsilon^{2})|^{2}
&\lesssim \sum_{i+j\leq2}\left(||\partial_{y}^{i}\varepsilon||_{L^{\infty}(y\le1)}^{2}\cdotp||\partial_{y}^{j}\varepsilon||_{L^{\infty}(y\le1)}^{2}
+\int_{y\geq1}\frac{|\partial_{y}^{i}\varepsilon|^{2}}{y^{8-2i}}\cdotp||y^{j}\partial_{y}^{j}\varepsilon||_{L^{\infty}(y\ge1)}^{2}\cdotp|\log b_{1}|^{C}\right)\\
&\lesssim b_{1}^{6}+\varXi_{4}\cdotp\varXi_{2}\cdotp|\log b_{1}|^{C}\lesssim b_{1}^{\frac{11}{3}},
\end{split}\]
\[\begin{split}
\int|H(\varepsilon^{3})|^{2}& \lesssim||\varepsilon||_{L^{\infty}}^{2}\sum_{i+j\leq2}\left(||\partial_{y}^{i}\varepsilon||_{L^{\infty}(y\le1)}^{2}\cdotp||\partial_{y}^{j}\varepsilon||_{L^{\infty}(y\le1)}^{2}+\int_{y\geq1}\frac{|\partial_{y}^{i}\varepsilon|^{2}}{y^{4-2i}}\cdotp||y^{j}\partial_{y}^{j}\varepsilon||_{L^{\infty}(y\ge1)}^{2}\right)\\
 & \lesssim b_{1}^{6}+|\log b_{1}|^{C}\varXi_{2}^{3}\lesssim b_{1}^{\frac{11}{3}}.
\end{split}\]
Summing up the above estimates, we get 
\begin{equation}
\int|H\mathscr{F}|^{2}\lesssim b_{1}^{\frac{11}{3}}.
\end{equation}
Thus 
\begin{equation}
\left|\int w_{2}H_{\lambda}({\lambda}^{-2}F_{\lambda})\right|=\frac{1}{\lambda^{4}}\left|\int\varepsilon_{2}H\mathscr{F}\right|\lesssim\frac{b_{1}^{\frac{7}{3}}}{\lambda^{4}},
\end{equation}
which completes the proof of (\ref{Lya2}). 
\end{proof}

\section{CLOSING THE BOOTSTRAP AND PROOF OF THEOREM 1.1}
\label{CLOSING THE BOOTSTRAP AND PROOF OF THEOREM 1.1}
\subsection{Closing the bootstrap}

Now we are able to close the bootstrap. As stated in section 3, this
consists of two steps, the first of which is the following: 
\begin{prop}
\label{prop:Improved-control}(Improved control) Assume $K>0$ has
been chosen large enough in Proposition \ref{prop:bootstrap}, then
for all $t\in[0,T_{exit})$ there hold
\begin{align}
\varXi_{1}(t)\leq&\sqrt{b_{1}(0)},\label{exit1'}\\
\varXi_{2}(t)\leq b_{1}^{\frac{4}{3}}|\log b_{1}|^{\frac{K}{2}},\quad\varXi_{4}(t)\leq& b_{1}^{4}|\log b_{1}|^{\frac{K}{2}},\quad\varXi_{6}(t)\leq\frac{K}{2}\frac{b_{1}^{6}}{|\log b_{1}|^{2}}.\label{exit2'}
\end{align}
\end{prop}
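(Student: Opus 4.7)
The plan is to improve $\varXi_2, \varXi_4, \varXi_6$ by Lyapunov comparison with the desired pointwise bounds, and to improve $\varXi_1$ by exploiting the dissipation of the energy $E(v)$.

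For $\varXi_6$, introduce the comparison functional $G(t) := b_1^6/(\lambda^{10}|\log b_1|^2)$. Direct differentiation using the modulation equations \eqref{Modeq1} together with the bootstrap shows $G'(t) = (c_0 + o(1))\,b_1\lambda^{-12}\cdot b_1^6/|\log b_1|^2$ for a fixed $c_0 > 0$. On the right-hand side of \eqref{Lya6}, substitute the bootstrap $\varXi_6 \leq K b_1^6/|\log b_1|^2$ to rewrite it as $C(1 + K/\sqrt{\log M} + \sqrt{K})\,b_1\lambda^{-12}\cdot b_1^6/|\log b_1|^2$. Picking $K$ large compared to $C/c_0$ and $\log M$ large compared to $K$ makes this prefactor smaller than $c_0 K/2$, so
\[
\frac{d}{dt}\left[\frac{\varXi_6 + O(b_1^{1/4}\cdot b_1^6/|\log b_1|^2)}{\lambda^{10}} - \frac{K}{2}\,G(t)\right] \leq 0.
\]
The initial smallness $\varXi_6(0) \leq b_1(0)^7 \ll b_1(0)^6/|\log b_1(0)|^2$ eliminates the boundary contribution and yields $\varXi_6(t) \leq (K/2)\,b_1^6/|\log b_1|^2$.

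The improvements of $\varXi_4$ and $\varXi_2$ are cleaner because the right-hand sides of \eqref{Lya4}, \eqref{Lya2} are free of the bootstrap constant $K$. Comparing $\varXi_4/\lambda^6$ with $b_1^4|\log b_1|^{K/2}/\lambda^6$, whose derivative is $(c_1+o(1))\,b_1^5|\log b_1|^{K/2}/\lambda^8$ for some $c_1>0$, one gets a strictly negative time derivative difference since $|\log b_1|^{K/2} \gg 1$. Initial smallness $\varXi_4(0) \leq b_1(0)^7$ closes the argument, and the analogue with exponent $4/3$ in place of $4$ handles $\varXi_2$ via \eqref{Lya2}.

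For $\varXi_1$, use $\frac{d}{dt}E(v) = -\int|\partial_t v|^2 \leq 0$ together with the decomposition $v = (\tilde Q_b + \varepsilon)_\lambda$ and scale invariance of $E$. Expanding $E(\tilde Q_b + \varepsilon)$ around $\tilde Q_b$ produces the quadratic form $(H_b\varepsilon, \varepsilon)$ with $H_b = H + O(b_1)$, a linear-in-$\varepsilon$ term of size $O(b_1\sqrt{\varXi_1})$ coming from $\Delta\tilde Q_b + \tilde Q_b^3 = O(b_1)$ in suitable weighted norms, and cubic/quartic terms absorbed via Sobolev and \eqref{exit2}. Sub-coercivity of $H$ under \eqref{orthoepsilon} together with the $\tau$-bound from \eqref{exit3} gives $(H\varepsilon, \varepsilon) \gtrsim \varXi_1 - C\tau^2$, while the explicit form of $\tilde Q_b$ yields $E(\tilde Q_b(t)) - E(Q) = O(b_1^2|\log b_1|^C)$ uniformly in $t$. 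Combined with $E(v_0) - E(Q) = O(b_1(0)^2|\log b_1(0)|^C)$, this gives $\varXi_1(t) \lesssim b_1(0)^2|\log b_1(0)|^C \ll \sqrt{b_1(0)}$.

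The main obstacle is the $\varXi_6$ step, where the bootstrap constant $K$ itself enters the right-hand side of \eqref{Lya6} through both $\varXi_6/\sqrt{\log M}$ and $b_1^3/|\log b_1|\sqrt{\varXi_6}$; closing the improvement requires the hierarchy $K \gg 1$ and $\log M \gg K$ while carefully tracking the absolute constant $c_0$ arising from the differentiation of $G$. By contrast, the $\varXi_4, \varXi_2$ improvements succeed because the bootstrap does not feed back into the Lyapunov right-hand side, and $\varXi_1$ is essentially dictated by the initial energy level.
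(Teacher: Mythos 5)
Your treatment of $\varXi_2,\varXi_4,\varXi_6$ is essentially the paper's: integrate the Lyapounov inequalities and compare against $\frac{d}{dt}\bigl(\lambda^{-2k}b_1^{\mu}|\log b_1|^{\nu}\bigr)$, with the $K$-dependence in the $\varXi_6$ right-hand side beaten by $\sqrt{K}\ll K$ and $K/\sqrt{\log M}\ll K$ for $M$ large. (A minor remark: you do not need the hierarchy $\log M\gg K$; since the constant $C$ in \eqref{Lya6} is universal, it suffices to fix $M$ so that $C/\sqrt{\log M}\le 1/4$ and then choose $K$ large, consistent with the paper's requirement that $K$ be independent of $M$.)

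The $\varXi_1$ step is where you diverge from the paper and where there is a genuine gap. You expand $E$ around $\tilde Q_b$, which produces the linear term $-\int(\Delta\tilde Q_b+\tilde Q_b^3)\varepsilon$, and you claim it is $O(b_1\sqrt{\varXi_1})$ because $\Delta\tilde Q_b+\tilde Q_b^3=O(b_1)$ in suitable weighted norms. This bound does not hold: to leading order $\Delta\tilde Q_b+\tilde Q_b^3 = H\tilde\alpha+O(\tilde\alpha^2)\approx -b_1\chi_{B_1}\Lambda Q$, and $\chi_{B_1}\Lambda Q$ lives on a ball of radius $B_1=|\log b_1|/\sqrt{b_1}$ with tail $\sim y^{-2}$, which is not in the dual of $\dot H^1_{\mathrm{rad}}(\mathbb{R}^4)$ uniformly. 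Pairing against the Hardy norm gives only $\bigl|b_1\int_{y\le B_1}\Lambda Q\,\varepsilon\bigr|\lesssim b_1\sqrt{\varXi_1}\,B_1 = \sqrt{b_1}\,|\log b_1|\sqrt{\varXi_1}$, much larger than $b_1\sqrt{\varXi_1}$. To make this route work you would have to exploit the orthogonality $(\varepsilon,\Phi_M)=0$ together with the $\varXi_2$ bootstrap to show the linear term is controlled, which is an extra step you have not carried out. Similarly, $E(\tilde Q_b)-E(Q)$ is $O(b_1|\log b_1|^C)$, not $O(b_1^2|\log b_1|^C)$ as you claim (the quadratic form $(H\tilde\alpha,\tilde\alpha)$ with $\tilde\alpha\approx b_1\chi_{B_1}T_1$ gives $b_1^2B_1^2\sim b_1|\log b_1|^2$), though this particular overestimate is harmless for the final conclusion $\varXi_1\le\sqrt{b_1(0)}$. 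The paper sidesteps all of this by expanding $E$ around $Q$ instead, absorbing $\tilde\alpha$ into the error $\tilde\varepsilon=\tilde\alpha+\varepsilon$: since $Q$ is a genuine critical point of $E$ the linear term vanishes identically, and one only needs the quadratic sub-coercivity of $H$ plus crude cubic/quartic bounds. That is both cleaner and avoids the delicate weighted pairing you would otherwise need to justify.
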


\begin{proof}
\emph{\uline{Step 1: Improved energy bound.}} (\ref{exit1'}) results
from the decrease of $E(u)$. Indeed, let $\tilde{\varepsilon}=\tilde{\alpha}+\varepsilon$,
then 
\begin{align}
E(u) & =\frac{1}{2}\int|\nabla u|^{2}-\frac{1}{4}\int|u|^{4}\nonumber\\ &=\frac{1}{2}\int|\nabla(Q+\tilde{\varepsilon})|^{2}-\frac{1}{4}\int(Q+\tilde{\varepsilon})^{4}\\ &=E(Q)+(H\tilde{\varepsilon},\tilde{\varepsilon})-\frac{1}{4}\int(4Q\tilde{\varepsilon}^{3}+\tilde{\varepsilon}^{4}).\nonumber 
\end{align}
We estimate these terms respectively. Using Cauchy-Schwarz inequality and the bound 
\[
|\partial_{y}^{i}\tilde{\alpha}|\lesssim\frac{b_{1}|\log b_{1}|^{C}}{1+y^{i}}\1_{y\le2B_{1}},
\]
the linear part is 
\[
(H\tilde{\varepsilon},\tilde{\varepsilon})=(H\varepsilon,\varepsilon)+(H\tilde{\alpha},\tilde{\alpha})+2(H\varepsilon,\tilde{\alpha})=(H\varepsilon,\varepsilon)+O(b_{1}^{\frac{1}{2}+\gamma}).
\]
for some $\gamma>0$ small enough. Moreover, the sub-coercivity (\ref{sub-coerH})
of $H$ and (\ref{exit3}) yield 
\[
(H\varepsilon,\varepsilon)\geq c\int|\partial_{y}\varepsilon|^{2}-\frac{1}{c}(\varepsilon,\psi)^{2}\gtrsim\varXi_{1}+O(\frac{b_{1}^{7}}{|\log b_{1}|^{2}}).
\]
For nonlinear terms, from (\ref{Hardy}), Sobolev embedding $\dot{H}^{1}(\mathbb{R}^{4})\hookrightarrow L^{4}(\mathbb{R}^{4})$
and assumption (\ref{exit1}) on $\varXi_{1}$ 
\[
\left|\int Q\tilde{\varepsilon}^{3}\right|\lesssim\int\frac{|\varepsilon|^{3}+|\tilde{\alpha}|^{3}}{1+y^{2}}\lesssim\int\frac{|\varepsilon|^{2}}{y^{2}}\cdotp||\varepsilon||_{L^{\infty}}+b_{1}^{2}|\log b_{1}|^{C}\lesssim b_{1}^{\frac{1}{2}+\gamma},
\]
\[
\int|\tilde{\varepsilon}|^{4}\lesssim\int|\varepsilon|^{4}+\int|\tilde{\alpha}|^{4}\lesssim\varXi_{1}^{2}+b_{1}^{2}|\log b_{1}|^{C}\lesssim b_{1}(0)
\]
With the above analysis and the smallness of $b_{1}$ ensured by (\ref{exit3}),
we conclude 
\begin{equation}
E(u)-E(Q)\gtrsim\varXi_{1}+O(b_{1}(0)^{\frac{1}{2}+\gamma}).
\end{equation}
On the other hand, by our construction of initial data and dissipation
of energy, we have
\begin{equation}
E(u)-E(Q)\lesssim b_{1}(0)|\log b_{1}(0)|^{C},
\end{equation}
and (\ref{exit1'}) follows by taking $b_{1}(0)$ small enough.

\emph{\uline{Step 2: Control of $\varXi_{6}$.}} Now we use proposition \ref{prop:Lya6} to obtain the improved
control for $\varXi_{6}$. To this end, we first derive an explicit
formula for $\lambda$, which is also useful for the later proof.

By modulation equation (\ref{Modeq1}) and explicit formula (\ref{decomb}), we obtain
\begin{equation}
\frac{\lambda_{s}}{\lambda}=-b_{1}+O(b_{1}^{3+\frac{1}{2}})=-\frac{2}{3s}+\frac{4}{9s\log s}+O\left(\frac{1}{s(\log s)^{\frac{5}{4}}}\right)\label{Lambda_s}
\end{equation}
or equivalently 
\begin{equation}
\left|\frac{d}{ds}\log \left[\frac{\lambda s^{\frac{2}{3}}}{(\log s)^{\frac{4}{9}}}\right]\right|\lesssim\frac{1}{s(\log s)^{\frac{5}{4}}}.
\end{equation}
Recall we assume $\lambda(0)=1$. Simple integration shows
\begin{equation}
\lambda=\frac{(\log s)^{\frac{4}{9}}}{s^{\frac{2}{3}}}\cdotp\frac{s_{0}^{\frac{2}{3}}}{(\log s_{0})^{\frac{4}{9}}}\left[1+O(\log s_0)^{-\frac{1}{4}}\right],\label{lambda}
\end{equation}
Now integrate (\ref{Lya6}) on $[0,t]$, we derive
\begin{equation}
\begin{split}
\varXi_{6}(t)\leq{} & \lambda^{10}(t)\left[\varXi_{6}(0)+\frac{b_{1}^{6}(0)}{|\log b_{1}(0)|^{2}}\right]+\frac{b_{1}^{6}(t)}{|\log b_{1}(t)|^{2}}\label{intLya6}\\
 & +C\left[1+\frac{K}{\sqrt{\log M}}+\sqrt{K}\right]\lambda^{10}(t)\int_{0}^{t}\frac{b_{1}}{\lambda^{12}}\frac{b_{1}^{6}}{|\log b_{1}|^{2}}(\tau)d\tau\nonumber 
\end{split}
\end{equation}
where $C>0$ is some constant independent of $M$ and $K$. 

Together with  (\ref{lambda}), (\ref{decomb}) and initial bound (\ref{initial2}),
we estimate the first term of (\ref{intLya6}) as 
\begin{equation}
\lambda^{10}(t)\left[\varXi_{6}(0)+\frac{b_{1}^{6}(0)}{|\log b_{1}(0)|^{2}}\right]\lesssim\lambda^{10}(t)\frac{b_{1}^{6}(0)}{|\log b_{1}(0)|^{2}}\lesssim\frac{b_{1}^{6}(t)}{|\log b_{1}(t)|^{2}}.\label{intLya6-1}
\end{equation}
For the integral term, we use (\ref{Modeq1}), (\ref{decomb}) to get
\[
\frac{(b_{1})_{s}}{b_{1}}=-\frac{3}{2}b_{1} +O\left(\frac{b_1}{|\log b_1|}\right).
\]
Combining with (\ref{Lambda_s}), this implies that  
\begin{equation}
\frac{d}{dt}\left(\frac{1}{\lambda^{10}}\frac{b_{1}^{6}}{|\log b_{1}|^{2}}\right)=\frac{1}{\lambda^{12}}\frac{b_{1}^{6}}{|\log b_{1}|^{2}}\left[-10\frac{\lambda_{s}}{\lambda}+(6+\frac{2}{|\log b_{1}|})\frac{(b_{1})_{s}}{b_{1}}\right]\gtrsim\frac{b_{1}}{\lambda^{12}}\frac{b_{1}^{6}}{|\log b_{1}|^{2}}.
\end{equation}
Therefore
\begin{equation}
\int_{0}^{t}\frac{b_{1}}{\lambda^{12}}\frac{b_{1}^{6}}{|\log b_{1}|^{2}}(\tau)d\tau\lesssim\frac{1}{\lambda^{10}(t)}\frac{b_{1}^{6}(t)}{|\log b_{1}(t)|^{2}}.\label{intLya6-2}
\end{equation}
Injecting (\ref{intLya6-1}), (\ref{intLya6-2}) into (\ref{intLya6}),
we conclude 
\begin{equation}
\varXi_{6}\leq C\left[1+\frac{K}{\sqrt{\log M}}+\sqrt{K}\right]\frac{b_{1}^{6}}{|\log b_{1}|^{2}},
\end{equation}
with $C>0$ independent of $M$ and $K$, and derive 
\begin{equation}
\varXi_{6}\leq\frac{K}{2}\frac{b_{1}^{6}}{|\log b_{1}|^{2}}
\end{equation}
by choosing $K$ large enough.

\emph{\uline{Step 3. Control of $\varXi_{4}$ and $\varXi_{2}$.}} Now we integrate (\ref{Lya4}) and (\ref{Lya2})
on $[0,t]$, and obtain 
\[
\varXi_{4}(t)\leq\lambda^{6}(t)\varXi_{4}(0)+C\lambda^{6}(t)\int_{0}^{t}\frac{b_{1}^{5}}{\lambda^{8}}(\tau)d\tau,
\]
\begin{equation}
\varXi_{2}(t)\leq\lambda^{2}(t)\varXi_{2}(0)+C\lambda^{2}(t)\int_{0}^{t}\frac{b_{1}^{\frac{5}{2}}}{\lambda^{4}}(\tau)d\tau.
\label{varXi2}
\end{equation}
We estimate the first term of each by (\ref{lambda}) and (\ref{initial2})
\[
\lambda^{6}(t)\varXi_{4}(0)\lesssim\lambda^{6}(t)b_{1}^{7}(0)\lesssim b_{1}^{4}(t)|\log b_{1}(t)|^{C},
\]
\[
\lambda^{2}(t)\varXi_{2}(0)\lesssim\lambda^{2}(t)b_{1}^{7}(0)\lesssim b_{1}^{\frac{4}{3}}|\log b_{1}|^{C}.
\]
Consequently, for some large constant $C>0$ (independent of $M$
and $K$), one has  
\[
\frac{d}{dt}\left(\frac{1}{\lambda^{6}}b_{1}^{4}|\log b_{1}|^{C}\right)=\frac{1}{\lambda^{8}}b_{1}^{4}|\log b_{1}|^{C}\left[-6\frac{\lambda_{s}}{\lambda}+(4-\frac{C}{|\log b_{1}|})\frac{(b_{1})_{s}}{b_{1}}\right]\gtrsim\frac{b_{1}^{5}}{\lambda^{8}}
\]
\[
\frac{d}{dt}\left(\frac{1}{\lambda^{2}}b_{1}^{\frac{4}{3}}|\log b_{1}|^{C}\right)=\frac{1}{\lambda^{4}}b_{1}^{\frac{4}{3}}|\log b_{1}|^{C}\left[-2\frac{\lambda_{s}}{\lambda}+(\frac{4}{3}-\frac{C}{|\log b_{1}|})\frac{(b_{1})_{s}}{b_{1}}\right]\gtrsim\frac{b_{1}^{\frac{7}{3}}}{\lambda^{4}}
\]
which bound the integral terms. To sum up, we obtain 
\[
\varXi_{4}\leq b_{1}^{4}|\log b_{1}|^{C},\quad \varXi_{2}\leq b_{1}^{\frac{4}{3}}|\log b_{1}|^{C}
\]
where $C$ is independent of $M$ and $K$. By taking $K$ large enough,
we conclude 
\[
\varXi_{4}\leq b_{1}^{4}|\log b_{1}|^{\frac{K}{2}},\quad \varXi_{2}\leq b_{1}^{\frac{4}{3}}|\log b_{1}|^{\frac{K}{2}}
\]
which finishes the whole proof of Proposition \ref{prop:Improved-control}. 
\end{proof}

Finally, we come to the heart of this paper, i.e. the control
of unstable models. The main point of our analysis is to separate
the two unstable directions $V_{2}$ and $\tau$ and to control both
of them at the same time based on a Brouwer argument. First, we shall analysis the dynamics of these two unstable
directions as a preparation. 
\begin{lem}
There holds,  for all $t\in[0,T_{exit})$, 
\begin{equation}
\left|\frac{d\tau}{ds}-\varsigma\tau\right|\lesssim\frac{b_{1}^{4}}{|\log b_{1}|}.\label{dynamictau}
\end{equation}
\end{lem}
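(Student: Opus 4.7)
The plan is to test the evolution equation (\ref{dynamicepsilon}) for $\varepsilon$ against the fixed eigenfunction $\psi$. Since $H\psi = -\varsigma\psi$ and $H$ is self-adjoint, differentiating $\tau = (\varepsilon,\psi)$ in $s$ and using $(H\varepsilon,\psi) = -\varsigma\tau$ immediately gives
\[
\frac{d\tau}{ds} - \varsigma\tau = \frac{\lambda_s}{\lambda}(\Lambda\varepsilon,\psi) + (\mathscr{F},\psi),
\]
so the task reduces to bounding each piece on the right by $b_1^4/|\log b_1|$. Throughout, the exponential decay of $\psi$ together with all its derivatives lets me treat every inner product as a well-localized integral.

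For the drift term, the modulation bound $|\lambda_s/\lambda|\lesssim b_1$ from (\ref{Modeq1}), combined with the weighted Hardy estimate $\int |\varepsilon|^2/[(1+y^{12})(1+|\log y|^2)] \lesssim \varXi_6$ (coming from the orthogonality (\ref{orthoepsilon})) and the bootstrap $\varXi_6 \lesssim b_1^6/|\log b_1|^2$, yields $|(\Lambda\varepsilon,\psi)| \lesssim \sqrt{\varXi_6} \lesssim b_1^3/|\log b_1|$ after moving $\Lambda$ onto $\psi$ via its formal adjoint, contributing the desired size. Among the pieces of $(\mathscr{F},\psi)$, the error $(\tilde\Psi_b,\psi)$ is controlled by the local bound (\ref{Psi4'}) with $M$ fixed, giving a harmless $O(b_1^5)$; the linear piece $(L(\varepsilon),\psi)$ yields $b_1\sqrt{\varXi_6}$ because $\tilde Q_b^2 - Q^2 = O(b_1)$ on the support of $\psi$; and the nonlinear piece $(N(\varepsilon),\psi)$ is absorbed using $\|\varepsilon\|_{L^\infty} \lesssim \sqrt{\varXi_6}$ (Sobolev embedding in dimension four applied at the $H^3$-level), producing a contribution of order $\varXi_6$.

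The only delicate term is $(\Mod,\psi)$, because the modulation estimate (\ref{Modeq2}) only controls $(b_2)_s + b_1 b_2(3+c_{b_1})$ up to $b_1^3/|\log b_1|$, a full power of $b_1$ larger than the target. The rescue is a spectral orthogonality: since $H$ is self-adjoint with $\psi$ in the strictly negative eigenspace, iterating $(Hf,\psi) = -\varsigma(f,\psi)$ along the Jordan chain $H\Lambda Q = 0$, $HT_1 = -\Lambda Q$, $HT_2 = -T_1$ forces $(\Lambda Q,\psi) = (T_1,\psi) = (T_2,\psi) = 0$. The localized profiles $\tilde T_1, \tilde T_2, \Lambda\tilde Q_b$ differ from the unlocalized ones only on $\{y \geq B_1\}$, and $\psi$'s exponential decay renders that tail super-polynomially small. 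Hence the poorly-controlled coefficient couples non-trivially only with the $\chi_{B_1}\partial S_j/\partial b_2$ pieces, which are pointwise $O(b_1)$ on the support of $\psi$ by the construction of $S_3, S_4$ in Proposition \ref{prop:approximate}, producing a contribution of the correct size $b_1\cdot b_1^3/|\log b_1|$. This spectral cancellation is the main obstacle to anticipate; without it, the weakly-controlled $b_2$-modulation would contaminate the bound at the $b_1^3/|\log b_1|$ level. All remaining estimates are routine Cauchy--Schwarz combined with the bootstrap hypotheses.
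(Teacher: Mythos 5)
Both you and the paper start from the same identity $\tau_s - \varsigma\tau = (\mathscr{F},\psi) + \frac{\lambda_s}{\lambda}(\Lambda\varepsilon,\psi)$, and your treatment of the drift term is fine. The routes diverge on $(\mathscr{F},\psi)$: the paper writes $\psi = (-1/\varsigma)^3 H^3\psi$ to convert the pairing into $-\varsigma^{-3}(H^3\mathscr{F},\psi)$, then Cauchy--Schwarz against the already-established bound $\int|H^3\mathscr{F}|^2\lesssim b_1^8/|\log b_1|^2$ from Step 4 of the proof of Proposition \ref{prop:Lya6}; this packages every cancellation, including the kernel of $H^3$ on $(\Lambda Q, T_1, T_2)$, into one prior estimate. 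You instead decompose $\mathscr{F}$ and treat the $\Mod$ piece by hand, explicitly invoking $(\Lambda Q,\psi)=(T_1,\psi)=(T_2,\psi)=0$. That spectral cancellation is real and is exactly the mechanism behind the paper's $H^3$ manipulation, so the two arguments are morally the same, but yours is more explicit and therefore more exposed.

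The exposure shows in your handling of the $\chi_{B_1}\partial_{b_2}S_j$ contributions. You assert that $\partial_{b_2}S_3$ and $\partial_{b_2}S_4$ are pointwise $O(b_1)$ on the support of $\psi$, citing Proposition \ref{prop:approximate}. But the bound actually stated there, \eqref{S3}, gives $|\partial_{b_2}S_3| \lesssim b_1\1_{y\leq1} + \frac{y^2}{|\log b_1|}\1_{y\geq1}$, which on any fixed annulus $1\le y\le C$ is only $O(1/|\log b_1|)$. Since $\psi$ has an exponential tail rather than compact support, this bound alone gives $|(\chi_{B_1}\partial_{b_2}S_3,\psi)| \lesssim 1/|\log b_1|$, and paired with the $(b_2)_s$-coefficient of size $b_1^3/|\log b_1|$ from \eqref{Modeq2} this yields only $b_1^3/|\log b_1|^2$, which is \emph{not} $\lesssim b_1^4/|\log b_1|$ (the ratio is $1/(b_1|\log b_1|)\to\infty$). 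The statement you want---that the inner product is $O(b_1)$---is true, but it is not a consequence of the crude pointwise bound \eqref{S3}; it needs either a sharper description of $\partial_{b_2}S_j$ on a bounded neighbourhood of the origin, or, simpler, the same trick you used for $T_1,T_2$: pass to $(\partial_{b_2}S_j,\psi)=-\varsigma^{-3}(H^3\partial_{b_2}S_j,\psi)$ and then invoke the pointwise bound $|H^3\partial_{b_2}S_3|\lesssim b_1(\1_{y\le1}+\tfrac{1}{|\log b_1|y^2}\1_{1\le y\le 6B_0}+\tfrac{|\log b_1|^C}{y^4}\1_{y\ge 6B_0})$ which the paper does derive in Step 6 of the proof of Proposition \ref{prop:Lya6}. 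So the idea is right, but as written there is a genuine gap at the step that controls the $\partial_{b_2}S_j$ pieces; closing it effectively forces you back to the $H^3$ device that the paper applies globally.
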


\begin{proof}
By the dynamic equation (\ref{dynamicepsilon}) for $\varepsilon$,
and the fact $H\psi=-\varsigma\psi$, we get 
\begin{equation}
\tau_{s}-\varsigma\tau=(\partial_{s}\varepsilon,\psi)+(\varepsilon,H\psi)=(\partial_{s}\varepsilon+H\varepsilon,\psi)=(\mathscr{F},\psi)+\frac{\lambda_{s}}{\lambda}(\Lambda\varepsilon,\psi).
\end{equation}
In the proof of Proposition \ref{prop:Lya6} , in particular see (\ref{H3L}), (\ref{H3N1}), (\ref{H3N2})
and (\ref{claim2}), we have actually shown 
\[
\int|H^{3}\mathscr{F}|^{2}\leq C(M,K)\frac{b_{1}^{8}}{|\log b_{1}|^{2}},
\]
which gives the desired upper bound 
\[
|(\mathscr{F},\psi)|=\frac{1}{\varsigma^{3}}|(\mathscr{F},H^{3}\psi)|=\frac{1}{\varsigma^{3}}|(H^{3}\mathscr{F},\psi)|\lesssim||H^{3}\mathscr{F}||_{L^{2}}||\psi||_{L^{2}}\lesssim\frac{b_{1}^{4}}{|\log b_{1}|}.
\]
The second term is controlled using well localization of $\psi$. That is 
\[
\left|\frac{\lambda_{s}}{\lambda}(\Lambda\varepsilon,\psi)\right|\lesssim\left|\frac{\lambda_{s}}{\lambda}\right|\cdotp\int\frac{|\varepsilon|+|y\partial_{y}\varepsilon|}{1+y^{10}}\cdotp|(1+y^{10})\psi|\lesssim b_{1}\sqrt{\varXi_{6}}\lesssim\frac{b_{1}^{4}}{|\log b_{1}|}.
\]
Thus (\ref{dynamictau}) follows.
\end{proof}

For the dynamic of $V$, we use the improved $\tilde{b}_{2}$ in \eqref{defb2'}. Let
\begin{equation}
\tilde{b}_{1}:=b_{1},\quad \tilde{b}_{2}\text{ as in (\ref{defb2'})} ,\quad \tilde{b}_{3}:=0,
\end{equation}
with associated unstable models 
\begin{equation}
\tilde{b}_{k}=b_{k}^{e}+\frac{\tilde{U}_{k}}{s^{k}(\log s)^{\frac{5}{4}}},\quad k=1,2,\quad \tilde{U}_{3}=0,\quad\tilde{V}=P\tilde{U}=P\begin{bmatrix}\tilde{U}_{1}\\
\tilde{U}_{2}
\end{bmatrix}\label{4.19}
\end{equation}
and 
\begin{equation}
\tilde{\tau}(t):=\tau(t)\cdotp\frac{|\log b_{1}(t)|}{b_{1}(t)^{3+\frac{1}{2}}}\label{tau'}
\end{equation}
Moreover, we modify (\ref{exit3}) to
\begin{equation}
|\tilde{V}_{1}(t)|\leq1,\quad|\tilde{V}_{2}(t)|\leq1,\quad|\tilde{\tau}(t)|\leq1.\label{exit'V,tau}
\end{equation}
We define 
\[
\tilde{T}_{exit}:=\sup\{0\leq t_{1}\leq T(v_{0}):\ \forall\, t\in[0,t_{1}],\ \text{(\ref{exit1})-(\ref{exit3}) and (\ref{exit'V,tau}) hold}\}. 
\]
Recall the definition of $T_{exit}$ in Proposition \ref{prop:bootstrap}. We have the following lemma.
\begin{lem}\label{lemma：dynamicV'}
For all $t\in[0,T_{exit})$, there holds that
\begin{equation}
|\tilde{V}-V|\lesssim {s^{-\frac{1}{4}}}\label{dynamicV'-1}
\end{equation}
and 
\begin{equation}
|s(\tilde{V}_{k})_{s}-(D_A\tilde{V})_{k}|\lesssim{(\log s)^{-\frac{1}{4}}},\quad k=1,2.\label{dynamicV'-2}
\end{equation}
\end{lem}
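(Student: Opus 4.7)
The bound (\ref{dynamicV'-1}) reduces to estimating $|\tilde U - U|$ since $\tilde V - V = P(\tilde U - U)$. By (\ref{4.19}) we have $\tilde U_1 = U_1$ and $\tilde U_2 - U_2 = s^{2}(\log s)^{5/4}(\tilde b_2 - b_2)$. Combining (\ref{b2'-b2}) with $b_1\sim 2/(3s)$ (which follows from $b_1 = b_1^e + U_1/(s(\log s)^{5/4})$ and $|U_1|\lesssim 1$) yields $|\tilde U_2 - U_2|\lesssim s^{-1/2}(\log s)^{5/4}\lesssim s^{-1/4}$ once $s_0$ is taken large enough.

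For (\ref{dynamicV'-2}) the key observation is that the identity in Proposition \ref{prop:dynamics for b} is purely algebraic in the decomposition (\ref{decomb}) and the condition $b_3=0$, so it applies verbatim to $(\tilde b_1,\tilde b_2,\tilde b_3:=0)$ and $\tilde U$:
\begin{align*}
&(\tilde b_k)_s + \left(2k-1 + \tfrac{2}{\log s}\right)\tilde b_1\tilde b_k - \tilde b_{k+1}\\
&\qquad= \frac{1}{s^{k+1}(\log s)^{5/4}}\left[s(\tilde U_k)_s - (A\tilde U)_k + O\left(\tfrac{1}{\sqrt{\log s}}+\tfrac{|\tilde U|+|\tilde U|^2}{\log s}\right)\right].
\end{align*}
The bootstrap bound $|\tilde V|\le 1$ from (\ref{exit'V,tau}) forces $|\tilde U|\lesssim 1$, so the inner error is $O(1/\sqrt{\log s})$, and the task reduces to bounding the left-hand side.

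For $k=1$, writing $b_2 = \tilde b_2 + (b_2-\tilde b_2)$ in (\ref{Modeq1}) and invoking (\ref{b2'-b2}) gives $|(\tilde b_1)_s + \tilde b_1^{2}(1+c_{b_1}) - \tilde b_2|\lesssim b_1^{5/2}$; since $|c_{b_1} - 2/\log s|\lesssim (\log s)^{-2}$ by (\ref{c_b}) together with $|\log b_1|\sim\log s$, the LHS above is $O(s^{-2}(\log s)^{-2})$. For $k=2$, the improved modulation (\ref{Modeq2'}) combined with the bootstrap bound $\varXi_6\le K b_1^6/|\log b_1|^2$ from (\ref{exit2}) yields $|(\tilde b_2)_s + b_1\tilde b_2(3+c_{b_1})|\lesssim b_1^{3}/|\log b_1|^{3/2}$, and the same perturbation $c_{b_1}\leftrightarrow 2/\log s$ produces $O(s^{-3}(\log s)^{-3/2})$. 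Multiplying by $s^{k+1}(\log s)^{5/4}$ and absorbing the $O(1/\sqrt{\log s})$ remainder, we obtain $|s(\tilde U_k)_s - (A\tilde U)_k|\lesssim (\log s)^{-1/4}$ for $k=1,2$. Finally, since $A = P^{-1}D_A P$, the identity $s(\tilde V)_s - D_A\tilde V = P[s(\tilde U)_s - A\tilde U]$ gives (\ref{dynamicV'-2}).

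The delicate step is the $k=2$ case: the basic bound (\ref{Modeq2}) alone would leave an $O((\log s)^{1/4})$ remainder in the dynamics of $\tilde V_2$, which would dominate the linear repulsion by the eigenvalue $2/3$ near $|\tilde V_2|=1$ and destroy the strictly outgoing behavior required for the Brouwer argument. The surrogate $\tilde b_2$ defined in (\ref{defb2'}) is engineered precisely to convert (\ref{Modeq2}) into (\ref{Modeq2'}), producing the extra factor of $(\log s)^{-1/2}$ that restores the decay $(\log s)^{-1/4}$ needed to close the bootstrap.
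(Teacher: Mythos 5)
Your proposal is correct and follows essentially the same route as the paper's proof: apply the algebraic identity of Proposition \ref{prop:dynamics for b} to $(\tilde{b},\tilde{U})$, control $|\tilde{U}|$ via the bootstrap bound, and invoke the modulation equations (\ref{Modeq1}) and (\ref{Modeq2'}) together with (\ref{b2'-b2}) and $b_1\sim 1/s$ to close the estimate. You are in fact slightly more explicit than the paper in two places: you separate the $k=1$ and $k=2$ cases (the paper uses a single uniform bound), and you make explicit the conversion $|c_{b_1}-2/\log s|\lesssim(\log s)^{-2}$ that the paper applies silently when switching between the coefficient $2/\log s$ in Proposition \ref{prop:dynamics for b} and the coefficient $c_{b_1}$ in the modulation equations.
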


\begin{proof}
(\ref{dynamicV'-1}) is immediately from (\ref{b2'-b2}) and $b_{1}\lesssim\frac{1}{s}$. To prove (\ref{dynamicV'-2}), we compute using the above definitions
\[
(\tilde{b}_{k})_{s}-(2k-1+c_{b_1})\tilde{b}_{1}\tilde{b}_{k}+\tilde{b}_{k+1}=\frac{1}{s^{k+1}(\log s)^{\frac{5}{4}}}\left[s(\tilde{U}_{k})_{s}-(A\tilde{U})_{k}+O(\frac{1}{\sqrt{\log s}}+\frac{|\tilde{U}|+|\tilde{U}|^{2}}{\log s})\right].
\]
Using (\ref{exit3}) and (\ref{dynamicV'-1}), we know $|\tilde{U}|\lesssim 1$.
then we use the modulation equations and $b_{1}\lesssim\frac{1}{s}$
to conclude 
\begin{align*}
|s(\tilde{U}_{k})_{s}-(A\tilde{U})_{k}| & \lesssim|(\tilde{b}_{k})_{s}-(2k-1+c_{b_1})\tilde{b}_{1}\tilde{b}_{k}+\tilde{b}_{k+1}|\cdotp s^{k+1}(\log s)^{\frac{5}{4}}+\frac{1}{\sqrt{\log s}}\\
 & \lesssim\frac{1}{\sqrt{|\log b_{1}|}}\frac{b_{1}^{3}}{|\log b_{1}|}\cdotp s^{k+1}(\log s)^{\frac{5}{4}}+\frac{1}{\sqrt{\log s}} \lesssim{(\log s)^{-\frac{1}{4}}}.
\end{align*}
This is equivalent to \eqref{dynamicV'-2}.
\end{proof}
Now we are ready to complete the second step towards closing the bootstrap. 
\begin{prop}
There exists some initial data $v_{0}$ such that $T_{exit}=T(v_{0})$. 
\end{prop}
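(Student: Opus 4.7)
The plan is to argue by contradiction using a Brouwer fixed-point argument, following the sketch in Step 3 of Subsection \ref{subsection:strategy of proof}. With $M, K$ as in Propositions \ref{prop:bootstrap} and \ref{prop:Improved-control}, and $s_0$ large (so $b_1(0)$ is small), fix $U_1(0) = 0$; the initial data $v_0 = \tilde{Q}_{b(0)} + \tau(0)\tilde\psi$ is then parametrized by
\[
(\tilde V_2(0), \tilde\tau(0)) \in \mathbb D := [-1,1]\times [-1,1],
\]
with $\tilde V_1(0) = -\tilde V_2(0)/3 + O(s_0^{-1/4})$ determined by (\ref{4.19}) and (\ref{dynamicV'-1}). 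Suppose for contradiction that every such $v_0$ satisfies $\tilde T_{exit}(v_0) < T(v_0)$; I will then construct a continuous retraction $\mathbb D \to \partial \mathbb D$, contradicting Brouwer.

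The first step is to show that at $t = \tilde T_{exit}$, only $|\tilde V_2| = 1$ or $|\tilde\tau| = 1$ can saturate. The $\Xi_{2k}$ bounds in (\ref{exit1})--(\ref{exit2}) are strictly improved by Proposition \ref{prop:Improved-control} and so cannot exit. The bounds (\ref{exit3}) on $V_1, V_2$ are strictly weaker than the ones in (\ref{exit'V,tau}) thanks to (\ref{b2'-b2}) and (\ref{dynamicV'-1}), while the $\tau$-bound in (\ref{exit3}) is equivalent to $|\tilde\tau|\leq 1$ by (\ref{tau'}). For the stable coordinate $\tilde V_1$, Lemma \ref{lemma：dynamicV'} gives $s \tilde V_1' = -\tilde V_1 + O((\log s)^{-1/4})$; writing $(s\tilde V_1)' = O((\log s)^{-1/4})$ and integrating from $|\tilde V_1(0)| \leq 1/2$ yields $|\tilde V_1(t)| \leq (s_0/s)\cdot \tfrac{1}{2} + O((\log s_0)^{-1/4}) < 1$ uniformly. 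Hence only the two unstable components can cause the exit.

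Next I verify strictly outgoing behavior on $\partial\mathbb D$. On $\{\tilde V_2^2 = 1\}$, Lemma \ref{lemma：dynamicV'} gives
\[
\tfrac{1}{2}\, s\, \tfrac{d}{ds}\tilde V_2^2 = \tfrac{2}{3}\tilde V_2^2 + O\bigl((\log s)^{-1/4}\bigr) \geq \tfrac{1}{2}.
\]
On $\{\tilde\tau^2 = 1\}$, combining (\ref{dynamictau}), (\ref{Modeq1}), the definition (\ref{tau'}) and $(b_1)_s/b_1 = -b_1(1+o(1))$ yields
\[
\tilde\tau_s = \varsigma\,\tilde\tau + O(b_1^{1/2}), \qquad \tfrac{1}{2}\tfrac{d}{ds}\tilde\tau^2 = \varsigma + O(b_1^{1/2}) > \tfrac{\varsigma}{2} > 0,
\]
using that $\varsigma>0$ is a fixed spectral constant. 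Both squared unstable components therefore strictly increase whenever they reach $1$, so the exit is always a transverse crossing of $\partial\mathbb D$; standard ODE continuity together with the implicit function theorem at this transverse crossing makes $(\tilde V_2(\tilde T_{exit}), \tilde\tau(\tilde T_{exit}))$ depend continuously on $(\tilde V_2(0), \tilde\tau(0))$.

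Combining these two steps, the map $\Phi : \mathbb D \to \partial\mathbb D$, $(\tilde V_2(0), \tilde\tau(0)) \mapsto (\tilde V_2(\tilde T_{exit}), \tilde\tau(\tilde T_{exit}))$, is continuous, and by strict outgoing it fixes $\partial \mathbb D$ pointwise (initial data already on the boundary have $\tilde T_{exit}=0$). This produces a continuous retraction of $\mathbb D$ onto $\partial\mathbb D$, contradicting Brouwer's no-retraction theorem. Hence some $v_0$ satisfies $\tilde T_{exit}(v_0) = T(v_0)$, which in particular gives $T_{exit}(v_0) = T(v_0)$. The main technical obstacle is the transversality estimate for $\tilde\tau$: one must combine (\ref{dynamictau}) with the $s$-derivative of the rescaling factor $|\log b_1|/b_1^{3.5}$ through (\ref{Modeq1}), and verify that the resulting error (which involves $(b_1)_s$, only \emph{a priori} bounded by $b_1^2$) remains genuinely smaller than the principal contribution $\varsigma\tilde\tau$ at $|\tilde\tau| = 1$ even as $b_1$ is only polynomially small in $1/s$.
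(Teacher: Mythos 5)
Your proof follows the same Brouwer fixed-point approach as the paper: parametrize $v_0$ by $(\tilde V_2(0),\tilde\tau(0))\in\mathbb D$, argue that a failure of the bootstrap for all data gives a continuous exit map $\mathbb D\to\partial\mathbb D$ which fixes the boundary, and derive a contradiction from the strictly outgoing transversality estimates of Lemma \ref{lemma：dynamicV'} and (\ref{dynamictau}). You add a worthwhile explicit check that the stable direction $\tilde V_1$ and the weaker bounds (\ref{exit3}) cannot saturate, and you correctly obtain $\tilde\tau_s=\varsigma\tilde\tau\bigl(1+O(b_1^{1/2})\bigr)$, which corrects what is evidently a sign typo ($O(b_1^{-1/2})$) in the paper's displayed computation.
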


\begin{proof}
We first construct proper $v_0$ so that (\ref{exit1})-(\ref{exit3}) and (\ref{exit'V,tau}) hold at $t=0$. Recall $v_{0}=\tilde{Q}_{b(0)}+\tau(0)\tilde{\psi}$ where $b_{k}(0)$
is determined by $U_{k}(0)$ and we have set $U_{1}(0)=0$.

Given a pair $(\tilde{V}_{2}(0),\tilde{\tau}(0))\in\mathbb{D}:=[-1,1]\times[-1,1]$,
we get the following from (\ref{tau'})
\begin{equation}
\tau(0)=\tilde{\tau}(0)\cdotp\frac{b_{1}(0)^{3+\frac{1}{2}}}{|\log b_{1}(0)|}.
\end{equation}
and from (\ref{defb2'}) and above definitions, we can determine $\tilde{V}_{1}(0)$
and $U_{2}(0)$ by solving 
\begin{equation}
\tilde{V}(0)=P\tilde{U}(0),\quad \text{with}
\begin{cases}
\tilde{U}_{1}(0)=U_{1}(0)=0,\\
\tilde{U}_{2}(0)=U_{2}(0)-\tau(0)\frac{(H^{2}\tilde{\psi}, \chi_{B_{\delta}}\Lambda Q)}{64\delta|
\log b_{1}(0)|}\cdotp s_{0}^{2}(\log s_{0})^{\frac{5}{4}}.
\end{cases}
\end{equation}
It is not hard to derive 
\begin{equation}
\begin{cases}
\tilde{V}_{1}(0)=-\frac{1}{3}\tilde{V}_{2}(0),\\
U_{2}(0)=\frac{1}{3}\tilde{V}_{2}(0)+\tau(0)\frac{(H^{2}\tilde{\psi}, \chi_{B_{\delta}}\Lambda Q)}{64\delta|\log b_{1}(0)|}\cdotp s_{0}^{2}(\log s_{0})^{\frac{5}{4}}.
\end{cases}
\end{equation}
With $U_{1}(0),U_{2}(0)$ and $\tau(0)$ at hand, $v_{0}$ is constructed
satisfying (\ref{exit1})-(\ref{exit2}) and (\ref{exit'V,tau}).

By contradiction, we assume $T_{exit}<T(v_{0})$ for all such $v_0$. Proposition \ref{prop:Improved-control}
and (\ref{dynamicV'-1}) imply $\tilde{T}_{exit}\le T_{exit}<T(v_{0})\le +\infty$.
Hence we obtain a map: 
\begin{align}
\begin{split}
\mathbb{D} & \to\partial\mathbb{D}\label{Brouwermap}\\
(\tilde{V}_{2}(0),\tilde{\tau}(0)) & \mapsto(\tilde{V}_{2}(\tilde{T}_{exit}),\tilde{\tau}(\tilde{T}_{exit})).
\end{split}
\end{align}
Moreover, the following strictly outgoing behavior is satisfied: For
$(\tilde{V}_{2}(\tilde{T}_{exit}),\tilde{\tau}(\tilde{T}_{exit}))\in\partial\mathbb{D}$,
there are two cases:

(i) If $|\tilde{V}_{2}(\tilde{T}_{exit})|=1$, using (\ref{dynamicV'-2}), we get
\[
|s(\tilde{V}_{2})_{s}-\frac{2}{3}\tilde{V}_{2}|\lesssim{(\log s)^{-\frac{1}{4}}},
\]
thus
\begin{equation}
\frac{d}{ds}\tilde{V}_{2}^{2}(\tilde{T}_{exit})>0.
\end{equation}

(ii) If $|\tilde{\tau}(\tilde{T}_{exit})|=1$ we resort to (\ref{dynamictau})
to compute 
\begin{equation}
\tilde{\tau}_{s}=\tilde{\tau}\left[\frac{\tau_{s}}{\tau}+\frac{(b_{1})_{s}}{b_{1}}(-3-\frac{1}{2}+\frac{1}{|\log b_{1}|})\right]=\varsigma\tilde{\tau}\left(1+O(b_1^{-\frac12})\right),
\end{equation}
which implies 
\begin{equation}
\frac{d}{ds}\tilde{\tau}^{2}(\tilde{T}_{exit})>0.
\end{equation}

Summing up the above analysis, we conclude that, the map defined by (\ref{Brouwermap})
is continuous. Combining with the Brouwer theorem, we get a contradiction
and thus finish the proof. 
\end{proof}

\subsection{Proof of Theorem \ref{thm:main}}
We choose initial data $v_{0}$ such that $T_{exit}=T(v_{0})\le+\infty $. As a result, our previous calculations are valid. Recall
% \[
% \lambda=\frac{(\log s)^{\frac{4}{9}}}{s^{\frac{2}{3}}}\cdotp\frac{s_{0}^{\frac{2}{3}}}{(\log s_{0})^{\frac{4}{9}}}\left[1+O({(\log s_0)^{-\frac{1}{4}}})\right],
% \]
% and
% \[
% s=s_{0}+\int_{0}^{t}\frac{d\tau}{\lambda^{2}(\tau)},\quad s_{0}>0.
% \]
% Hence
% \[
% \lambda \to 0,\  s\to +\infty\quad  \text{as}\quad t\to T(v_{0}).
% \]
% Recall we have actually derived that 
\[
\frac{d}{ds}\log \frac{\lambda s^{\frac{2}{3}}}{(\log s)^{\frac{4}{9}}}=O\left(\frac{1}{s(\log s)^{\frac{5}{4}}}\right),
\]
and by integration on $[s,+\infty)$, we get
\begin{equation}
\lambda=c_{1}(v_{0})\frac{(\log s)^{\frac{4}{9}}}{s^{\frac{2}{3}}}\left[1+O({(\log s)^{-\frac{1}{4}}})\right]\label{lambda-1}
\end{equation}
for some constant $c_{1}(v_{0})>0$. Using $dt=\lambda^{2}ds$ and \eqref{Modeq1}, one has 
\[-\lambda\lambda_t=-\frac{\lambda_s}{\lambda}=\frac{2}{3s}\left[1+O\left(\frac{1}{\log s}\right)\right]=\frac{c_2(v_0)\lambda^{\frac{3}{2}}}{|\log \lambda|^{\frac{2}{3}}}\left[1+O((\log s)^{-\frac14})\right],\]
that is 
\[-\lambda^{-\frac{1}{2}}|\log \lambda|^{\frac{2}{3}}\lambda_t=c_2(v_0)(1+O(1)).\]
From this it is not hard to show $\lambda$ touches zero at some finite time $T=T(v_0)<\infty$.
Moreover
% we derive $T(v_{0})<+\infty$. Combined with
% our control on $\varepsilon$, we get 
% \begin{equation}
% \lambda\searrow0\ \text{and}\ s\nearrow+\infty\quad \text{as}\ t\nearrow T(v_{0}).
% \end{equation}
\begin{equation}
T(v_{0})-t=\int_{s}^{+\infty}\lambda^{2}d\sigma=c_{3}(v_{0})\frac{(\log s)^{\frac{8}{9}}}{s^{\frac{1}{3}}}\left[1+O((\log s)^{-\frac{1}{4}})\right].\label{T-t}
\end{equation}
Finally, we can conclude from  (\ref{lambda-1}) and (\ref{T-t}) that
\begin{equation}
\lambda(t)=c(v_{0})\frac{(T(v_{0})-t)^{2}}{|\log (T(v_{0})-t)|^{\frac{4}{3}}}(1+o_{t\to T(v_{0})}(1))
\end{equation}
This verifies (\ref{1.9}). 

To prove (\ref{1.8}), We adapt the strategy from \cite{merle2005profiles}. 
First, direct computation using (\ref{varXi2}) implies
\begin{equation}
\forall t\in[0,T),\quad \|\Delta\tilde{v}(t,x)\|_{L^{2}(\mathbb{R}^{4})}<C(v_{0})<+\infty
\label{Lapv}\end{equation}
where
\begin{equation}
\tilde{v}(t,x)=v(t,x)-\frac{1}{\lambda(t)}Q\left(\frac{x}{\lambda(t)}\right)
=(\tilde{\alpha}+\varepsilon)_{\lambda(t)}.
\end{equation}
Now standard parabolic theory ensures the regularity of $v(t,x)$ away from the origin. Hence
\begin{equation}
\forall R>0, \quad \tilde{v}(t,x)\to v^{*}\ \text{in}\ \dot{H}^{1}(|x|>R)
\end{equation}
for some $v^{*}\in\dot{H}^{1}(|x|>R)$.
Moreover, (\ref{Lapv}) ensures the energy of $\tilde{v}(t,x)$ does not concentrate at the origin, that is
 \begin{equation}
\forall t\in[0,T),\ \|\tilde{v}(t,x)\|_{\dot{H}^{1}(|x|<R)}\to 0\ \text{uniformly as}\  R\to 0
\end{equation}
Combined with the boundedness of energy, we have $v^*\in \dot{H^1}$ and
\begin{equation}
v(t,x)-\frac{1}{\lambda(t)}Q\left(\frac{x}{\lambda(t)}\right)\to v^{*} \quad\text{in $\dot{H}^{1}$} 
\end{equation}
as stated in (\ref{1.8}). This finishes the proof of Theorem \ref{thm:main}.

\section*{Acknowledgments}
The third author is very grateful to his advisors,
Chenjie Fan and Ping Zhang, for their constant encouragements and guidance. The third author is supported by National Key R\&D
Program of China under grant 2021YFA1000800.

\appendix
%dummy comment inserted by tex2lyx to ensure that this paragraph is not empty

\section{SOME SOBOLEV LEMMAS}

Recall the definition of $\psi$ and $\Phi_{M}$ in (\ref{def:psi}) and (\ref{defPhi}).
The following lemma \ref{lem-subcoerH}-\ref{lem:weightedcoer} can be found in \cite{schweyer2012type}. Readers
can refer to the proof therein. 
\begin{lem}\label{lem-subcoerH}
(Sub-coercivity of $H$) There exists $M_{0}>0$ and $c>0$ such that
for $M>M_{0}$ and any $u\in\dot{H}_{rad}^{1}(\mathbb{R}^{4})$ with
$(u,\Phi_{M})=0$, we have 
\begin{equation}
(Hu,u)\ge c\int|\partial_{y}u|^{2}-\frac{1}{c}(u,\psi)^{2}.\label{sub-coerH}
\end{equation}
\end{lem}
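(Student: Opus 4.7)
The plan is to argue by contradiction, following the standard concentration-compactness scheme for the subcoercivity of a Schrödinger operator with a single negative eigenvalue. Suppose the bound fails; then for each integer $n\geq 1$ there exists a radial $u_n\in\dot{H}^1(\mathbb{R}^4)$ satisfying $(u_n,\Phi_M)=0$, normalized so that $\int|\partial_y u_n|^2=1$, and with $(Hu_n,u_n)+n(u_n,\psi)^2<1/n$. In particular $(u_n,\psi)\to 0$ and $(Hu_n,u_n)\to\alpha$ for some $\alpha\leq 0$.

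First, I would extract a weak $\dot{H}^1$ limit $u_n\rightharpoonup u_\infty$. The potential $V=3Q^2$ decays like $y^{-4}$, so combining Rellich compactness on compact sets with the Hardy bound $\int u_n^2/y^2\lesssim\int|\partial_y u_n|^2$ to control the tail, one obtains the strong convergence $\int V u_n^2\to\int V u_\infty^2$. Since $\psi$ decays exponentially and $\Phi_M$ has compact support, pairing them with $u_n$ passes to the limit, giving $(u_\infty,\psi)=0$ and $(u_\infty,\Phi_M)=0$. Weak lower semicontinuity of the Dirichlet integral then yields $(Hu_\infty,u_\infty)\leq\liminf_n(Hu_n,u_n)\leq 0$.

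Next, one exploits the spectral structure of $H$: a simple negative eigenvalue $-\varsigma$ with eigenfunction $\psi$, and non-negative remainder. Combined with $(u_\infty,\psi)=0$, this forces $(Hu_\infty,u_\infty)=0$, so $u_\infty$ achieves a constrained minimum on $\psi^\perp$ and, by a standard Lagrange multiplier argument, lies in $\ker H\cap\psi^\perp$. In $\dot{H}^1_{rad}$ this kernel is one-dimensional, spanned by $\Lambda Q$, since the second solution $\Gamma$ in \eqref{Gamma} is too singular at the origin ($\Gamma\sim 2/y^2$) to belong to $\dot{H}^1_{rad}$ in dimension four. Writing $u_\infty=\mu\Lambda Q$ and using $(u_\infty,\Phi_M)=0$ together with the non-degeneracy $(\Phi_M,\Lambda Q)=64\log M(1+o_{M\to\infty}(1))\neq 0$ for $M\geq M_0$ large, one concludes $\mu=0$, hence $u_\infty\equiv 0$.

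Finally, with $u_\infty\equiv 0$ the strong convergence of the potential term gives $(Hu_n,u_n)=1-\int V u_n^2\to 1$, contradicting $(Hu_n,u_n)\to\alpha\leq 0$. The main obstacle is the spectral step: one must carefully argue that the constrained minimizer on $\psi^\perp$ lies in $\ker H$, and that the only radial element of this kernel belonging to $\dot{H}^1$ is $\Lambda Q$ (ruling out $\Gamma$). The rest — compactness of the potential term via Hardy at infinity, the fact that $(\Phi_M,\Lambda Q)\sim\log M$ is nonzero for large $M$, and passing to the limit in the two scalar products — is comparatively routine.
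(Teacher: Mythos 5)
Your proof is a clean concentration-compactness argument, and it is essentially correct. Note that the paper does not give a proof of this lemma — it simply refers the reader to Schweyer (2012), where the analogous sub-coercivity is established by precisely the sort of contradiction-plus-compactness scheme you use, so your reconstruction is in the expected spirit. The chain of reductions is sound: normalization and contradiction sequence, weak $\dot H^1$ limit, compactness of the potential term (via Rellich inside and Hardy outside, exploiting $V\sim y^{-4}$), passage to the limit in the two pairings (legitimate because $\psi$ decays exponentially and $\Phi_M$ is compactly supported, so both define bounded functionals on $\dot H^1_{rad}$), weak lower semicontinuity, identification of the kernel as $\mathrm{span}\{\Lambda Q\}$ by ruling out $\Gamma\sim 2/y^2$, elimination of $\Lambda Q$ via $(\Phi_M,\Lambda Q)\sim 64\log M\neq 0$, and the final contradiction $1\leq 0$.

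Two points you rightly flagged deserve a sentence more. First, the nonnegativity $(Hu,u)\geq 0$ for $u\in\dot H^1_{rad}$ with $(u,\psi)=0$ is a statement on $\dot H^1$, not $L^2$, so the spectral decomposition of $H$ does not apply literally; it follows by density of $C_c^\infty\cap\psi^\perp$ in $\dot H^1_{rad}\cap\psi^\perp$ together with continuity of $u\mapsto(Hu,u)$ with respect to $\dot H^1$ (the potential part being controlled by Hardy). Second, the Lagrange-multiplier step is most transparently done without introducing a normalization constraint: since $(H\cdot,\cdot)\geq 0$ on $\psi^\perp$ and $(Hu_\infty,u_\infty)=0$, the quadratic $t\mapsto(H(u_\infty+tv),u_\infty+tv)$ is nonnegative with a zero at $t=0$, which forces $(Hu_\infty,v)=0$ for every $v\in\dot H^1_{rad}\cap\psi^\perp$; combined with $(Hu_\infty,\psi)=-\varsigma(u_\infty,\psi)=0$ this gives $Hu_\infty=0$ weakly, and then $u_\infty\in\mathrm{span}\{\Lambda Q\}$ as you say. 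One last remark on quantifiers: your argument produces a constant $c=c(M)$ for each fixed $M\geq M_0$, whereas the lemma as typeset reads as if $c$ were uniform in $M$. The $M$-dependent version is what is actually used in the paper (the lemma is applied with $M$ fixed once and for all), and it matches the cited source, so this is a stylistic imprecision in the statement rather than a gap in your argument.
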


\begin{lem}
(Hardy inequality) For all $u\in\dot{H}_{rad}^{1}(\mathbb{R}^{4})$,
we have 
\begin{equation}
\int\frac{|u|^{2}}{y^{2}}+\sup_{y\in\mathbb{R}^{4}}|yu|^{2}\lesssim\int|\partial_{y}u|^{2}.\label{Hardy}
\end{equation}
\end{lem}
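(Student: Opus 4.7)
My plan is to prove both inequalities by integration-by-parts style arguments, first establishing them for smooth compactly supported radial functions and then passing to the limit by density in $\dot{H}^1_{\mathrm{rad}}(\mathbb{R}^4)$. Recall the convention $\int f = \int_0^\infty f(y) y^3\,dy$, so the claim reads $\int_0^\infty |u|^2 y\,dy + \sup_{y>0}|yu(y)|^2 \lesssim \int_0^\infty |\partial_y u|^2 y^3\,dy$ for radial $u$.

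For the Hardy piece, I would use $\frac{d}{dy}(y^2) = 2y$ to write
\begin{equation*}
\int_0^\infty u^2\, y\, dy = \frac{1}{2}\int_0^\infty u^2\,\frac{d(y^2)}{dy}\,dy = -\int_0^\infty u\,\partial_y u\, y^2\, dy,
\end{equation*}
where the boundary terms vanish for $u\in C_c^\infty$. Cauchy--Schwarz then gives
\begin{equation*}
\int_0^\infty u^2 y\,dy \le \left(\int_0^\infty u^2 y\,dy\right)^{1/2}\left(\int_0^\infty (\partial_y u)^2 y^3\,dy\right)^{1/2},
\end{equation*}
which yields $\int u^2/y^2 \le \int |\partial_y u|^2$ after dividing through (this is the sharp constant $(d-2)^{-2}\cdot 4 = 1$ in dimension four).

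For the pointwise piece, I would use the fundamental theorem of calculus $u(y) = -\int_y^\infty \partial_r u(r)\,dr$ (valid for $u\in C_c^\infty$), then split $\partial_r u\cdot r^{3/2}\cdot r^{-3/2}$ and apply Cauchy--Schwarz:
\begin{equation*}
|u(y)|^2 \le \left(\int_y^\infty r^{-3}\,dr\right)\left(\int_y^\infty (\partial_r u)^2 r^3\,dr\right) = \frac{1}{2y^2}\int_y^\infty (\partial_r u)^2 r^3\,dr,
\end{equation*}
which gives $|yu(y)|^2 \le \tfrac{1}{2}\|\partial_y u\|_{L^2(y^3 dy)}^2$, uniformly in $y$. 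Taking the supremum and adding the two bounds gives the claim.

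Finally, to extend from $C_c^\infty$ radial functions to all of $\dot{H}^1_{\mathrm{rad}}(\mathbb{R}^4)$, I would invoke the standard density of smooth compactly supported radial functions in $\dot{H}^1_{\mathrm{rad}}$; the Hardy bound passes to the limit by Fatou, and the pointwise bound passes since the left-hand side $\sup_y|yu(y)|$ is continuous on $\dot{H}^1_{\mathrm{rad}}$ (radial Sobolev embedding). There is no serious obstacle here—the main thing to be careful about is justifying the boundary terms and the limiting procedure; the inequality itself is a classical Hardy estimate in $\mathbb{R}^4$.
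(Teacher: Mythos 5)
Your proof is correct. Note that the paper does not provide its own proof of this lemma; it cites the original work of Schweyer \cite{schweyer2012type} for Lemmas A.1--A.4 and refers the reader there, so there is no in-paper argument to compare against. Your computation is the standard one: for the Hardy piece you write $\int_0^\infty u^2 y\,dy = -\int_0^\infty u\,\partial_y u\, y^2\,dy$ by integration by parts against $\frac{1}{2}(y^2)'$ and close with Cauchy--Schwarz, recovering the sharp constant $4/(d-2)^2 = 1$ in dimension four; for the pointwise piece you write $u(y)=-\int_y^\infty \partial_r u\,dr$ and apply Cauchy--Schwarz against $r^{-3}$, giving $|yu(y)|^2\le \tfrac12\int|\partial_y u|^2$ uniformly in $y$. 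The extension from $C_c^\infty$ radial functions to $\dot H^1_{\mathrm{rad}}(\mathbb{R}^4)$ is routine since $d=4>2$ ensures density; the Hardy term passes by Fatou along an a.e.\ convergent subsequence, and the $L^\infty$ term passes since $u\mapsto yu$ is continuous from $\dot H^1_{\mathrm{rad}}$ to $L^\infty$ by the very estimate you established. There are no gaps.
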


\begin{lem}
For all $u\in\dot{H}_{rad}^{1}(\mathbb{R}^{4})\cap \dot{H}_{rad}^{2}(\mathbb{R}^{4})$ and $\gamma>0$,
we have 
\begin{equation}
\int|\partial_{y}^{2}u|^{2}+\int\frac{|\partial_{y}u|^{2}}{y^{2}}\lesssim\int|\Delta u|^{2},
\end{equation}
\begin{equation}
\int\frac{|u|^{2}}{y^{4}(1+|\log y|^{2})}\lesssim\int\frac{|\partial_{y}u|^{2}}{y^{2}}+\int_{1\leq y\leq2}|u|^{2},
\end{equation}
\begin{equation}
\int_{y\geq1}\frac{|u|^{2}}{y^{4+\gamma}(1+|\log y|^{2})}\lesssim\int_{y\geq1}\frac{|\partial_{y}u|^{2}}{y^{2+\gamma}(1+|\log y|^{2})}+\int_{1\leq y\leq2}|u|^{2}.
\end{equation}
\end{lem}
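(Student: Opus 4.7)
The three inequalities in this lemma are weighted Sobolev/Hardy estimates tailored to radial functions on $\mathbb{R}^4$. I would handle the first by direct computation, and the second and third by converting to a 1D weighted Hardy inequality via the substitution $t=\log y$.

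For the first inequality, recall that on radial functions in $\mathbb{R}^4$ the Laplacian acts as $\Delta u = \partial_y^2 u + \tfrac{3}{y}\partial_y u$. Expanding the square,
\begin{equation}
\int |\Delta u|^2 = \int (\partial_y^2 u)^2 + 6\int \frac{\partial_y^2 u \, \partial_y u}{y} + 9\int \frac{(\partial_y u)^2}{y^2},
\end{equation}
where I use the convention $\int f = \int_0^\infty f(y)\,y^3\,dy$. Integration by parts (justified by the decay of $u\in\dot H^1\cap \dot H^2$ at infinity and smoothness at the origin for radial profiles) on the cross term gives $\int \partial_y^2 u\,\partial_y u\, y^2\,dy = -\int (\partial_y u)^2 y\,dy$, so the identity collapses to $\int|\Delta u|^2 = \int (\partial_y^2 u)^2 + 3\int (\partial_y u)^2/y^2$, which is the inequality with constant $1$.

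For the second inequality, rewriting in the radial measure, what is needed is the one-dimensional Hardy estimate $\int_0^\infty \frac{|u|^2}{y(1+|\log y|^2)}\,dy \lesssim \int_0^\infty |\partial_y u|^2\,y\,dy + \int_1^2 |u|^2 y^3\,dy$. I would set $v(t):=u(e^t)$, so $dt=dy/y$ and $|v'(t)|^2 = |\partial_y u|^2 y^2$, and the target becomes
\begin{equation}
\int_{-\infty}^\infty \frac{v^2}{1+t^2}\,dt \;\lesssim\; \int_{-\infty}^\infty |v'|^2\,dt + |v(0)|^2.
\end{equation}
Now subtract the base value: let $w(t)=v(t)-v(0)$, so $w(0)=0$, and split at $t=0$. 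On each half-line, the classical 1D Hardy inequality gives $\int w^2/t^2 \le 4\int |w'|^2$, while $\frac{1}{1+t^2}\le \frac{1}{t^2}$, yielding $\int w^2/(1+t^2)\lesssim \int |v'|^2$. Since $|v|^2\le 2|w|^2+2|v(0)|^2$ and $\int dt/(1+t^2)=\pi$, the asserted bound follows. Finally, $|v(0)|^2 = |u(1)|^2$ is controlled by $\int_1^2|u|^2\,y^3\,dy + \int |\partial_y u|^2/y^2$ through a standard averaging/Poincaré argument: for each $s\in[1,2]$, $u(1)=u(s)-\int_1^s u'\,dr$, and averaging squared over $s$ gives the bound.

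The third inequality uses the same change of variables, producing
\begin{equation}
\int_0^\infty \frac{v^2\,e^{-\gamma t}}{1+t^2}\,dt \;\lesssim_\gamma\; \int_0^\infty \frac{|v'|^2\,e^{-\gamma t}}{1+t^2}\,dt + |v(0)|^2.
\end{equation}
Here I would integrate by parts with the primitive $\Phi(t):=\int_t^\infty \frac{e^{-\gamma s}}{1+s^2}\,ds$, which satisfies $\Phi(0)\le \pi$, $\Phi(t)\to 0$, and $\Phi(t)\le \frac{e^{-\gamma t}}{\gamma(1+t^2)}$; this gives $\int v^2(-\Phi')\,dt = v(0)^2\Phi(0)+2\int vv'\Phi\,dt$, and Cauchy-Schwarz with AM-GM absorbs the cross term into the left side. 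Again $|v(0)|^2\lesssim \int_{1\le y\le 2}|u|^2$ after using the $\gamma$-weighted term to control $\int_1^2 |\partial_y u|^2$. No step is deep; the only subtlety is making sure boundary values at $y=1$ are recovered cleanly from the $L^2$-bound on the annulus $\{1\le y\le 2\}$ plus the gradient weight, which is precisely the role of the extra term on the right-hand side.
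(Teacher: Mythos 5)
The paper does not prove this lemma itself: it is stated as one of the appendix results (Lemmas A.1--A.4 in the source) with the remark that the proofs can be found in Schweyer's 2012 paper, to which the reader is referred. So there is no in-paper proof to compare against. Your argument, however, is correct and gives a clean elementary proof of all three estimates, so the comparison worth making is with Schweyer's approach.

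Your first inequality is handled exactly as one would expect: expand $|\Delta u|^{2}=(\partial_{y}^{2}u+\tfrac{3}{y}\partial_{y}u)^{2}$ in the measure $y^{3}dy$ and integrate the cross term by parts to obtain the pointwise identity $\int|\Delta u|^{2}=\int(\partial_{y}^{2}u)^{2}+3\int(\partial_{y}u)^{2}/y^{2}$ on smooth rapidly-decaying radial functions, then conclude by density; this is the same computation that appears in the cited reference. For the second and third inequalities, your device of substituting $t=\log y$ so that $v(t)=u(e^{t})$ turns the weighted $L^{2}(y^{3}dy)$ Hardy inequalities into the classical one-dimensional Hardy inequality $\int w^{2}/t^{2}\leq4\int|w'|^{2}$ on half-lines (after subtracting $v(0)$), and for the $\gamma$-weighted version into a direct integration-by-parts with the primitive $\Phi(t)=\int_{t}^{\infty}e^{-\gamma s}(1+s^{2})^{-1}ds$ followed by Cauchy--Schwarz and absorption. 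This is a transparent reformulation. In Schweyer's paper the analogous estimates are proved by integrating by parts directly in the $y$ variable with explicit weight functions (without the exponential change of variables), which is equivalent in content but somewhat less structured. The only two places where one has to be mildly careful, and which you correctly flag, are (i) the recovery of the trace $|u(1)|^{2}$ from the annulus $\{1\leq y\leq2\}$ plus the gradient term, done by the averaging identity $u(1)=u(s)-\int_{1}^{s}\partial_{y}u$ and integrating over $s\in[1,2]$, and (ii) the vanishing of boundary terms, which is justified by density together with the pointwise bound $|yu|\lesssim\|u\|_{\dot H^{1}}$ from the radial Hardy inequality so that $\Phi(t)v(t)^{2}\to0$ at infinity. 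Both are handled adequately. In short: your proof is correct, uses a genuinely more unified mechanism (logarithmic substitution reducing all the weighted bounds to one classical Hardy inequality), and matches the spirit of the cited reference.
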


\begin{lem}\label{lem:weightedcoer}
(Weighted coercivity of $H$) Let $M\geq1$ be large enough, then
there exists $C(M)>0$ such that for any $u\in\dot{H}_{rad}^{1}(\mathbb{R}^{4})\cap\dot{H}_{rad}^{2}(\mathbb{R}^{4})$
with $(u,\Phi_{M})=0$, we have 
\begin{equation}
\int|Hu|^{2}\geq C(M)\left[\int\frac{|u|^{2}}{y^{4}(1+|\log y|^{2})}+\int\frac{|\partial_{y}u|^{2}}{y^{2}}+\int|\partial_{y}^{2}u|^{2}\right].
\end{equation}
\begin{align}
\begin{split}
\int_{y\leq1}&|Hu|^{2}+\int_{y\geq1}\frac{|Hu|^{2}}{y^{4+4k}(1+|\log y|^{2})}\\
\geq&\  C(M)\left[\ \int_{y\leq1}(|u|^{2}+|\partial_{y}u|^{2})+\int_{y\geq1}(\frac{|u|^{2}}{y^{8+4k}(1+|\log y|^{2})}+\frac{|\partial_{y}u|^{2}}{y^{6+4k}(1+|\log y|^{2})})\right].
\end{split}
\end{align}
\end{lem}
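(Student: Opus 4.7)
My plan is to prove both inequalities by a single compactness–contradiction scheme, using Lemma \ref{lem-subcoerH} (sub-coercivity of $H$) and the Hardy-type bounds already listed as the main inputs, together with the orthogonality to $\Phi_M$ to remove the unique smooth resonance of $H$. Throughout I use that $V=3Q^2$ is smooth with $|\partial_y^i V|\lesssim(1+y)^{-(4+i)}$, so that $H$ is a compactly supported (in the relevant weighted sense) perturbation of $-\Delta$, and that the radial kernel of $H$ on smooth functions is one-dimensional, spanned by $\Lambda Q$; the second branch $\Gamma$ is singular at the origin and excluded by $\dot H^2$ regularity.

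For the first inequality, suppose it fails. Pick a sequence $u_n\in\dot H^1_{\rm rad}\cap\dot H^2_{\rm rad}$ with $(u_n,\Phi_M)=0$, with right-hand side equal to $1$, and with $\|Hu_n\|_{L^2}\to0$. Rewriting $\Delta u_n=-Hu_n-Vu_n$ and combining $|V|\lesssim(1+y^2)^{-2}$ with the listed Hardy estimates yields $\|\Delta u_n\|_{L^2}\lesssim 1$, hence $(u_n)$ is bounded in $\dot H^2_{\rm rad}$ and precompact in $L^2_{\rm loc}$. Extracting a subsequence $u_n\rightharpoonup u_\infty$, the limit solves $Hu_\infty=0$; smoothness of $u_\infty$ near $0$ (by $\dot H^2$) rules out $\Gamma$, so $u_\infty=\alpha\Lambda Q$. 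Compact support of $\Phi_M$ makes the orthogonality pass to the limit, and $(\Lambda Q,\Phi_M)=64\log M(1+o_M(1))\neq 0$ for $M$ large forces $\alpha=0$. A tail bound in $\{y\ge R\}$ uniform in $n$, obtained by testing $Hu_n=o_n(1)$ against $y^{-4}(1+|\log y|^2)^{-1}u_n$ after a smooth cutoff and integration by parts, then promotes the $L^2_{\rm loc}$ strong convergence to strong convergence in the whole weighted norm on the right-hand side, contradicting the normalization.

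For the second inequality, the same template works with adjusted function spaces: the ambient Hilbert norm is the one on the right-hand side, and the smallness assumption is on the left-hand side. The key remark is that $\Phi_M$ is supported in $\{y\le 2M\}$, which sits inside the region where the right-hand side simply carries $|u_n|^2+|\partial_y u_n|^2$, so the limiting orthogonality is meaningful and kills the candidate $\alpha\Lambda Q$. The interior region uses Lemma \ref{lem-subcoerH} and the unweighted $\dot H^2$ bound; in the exterior region the stronger polynomial weights $y^{-(8+4k)}$ and $y^{-(6+4k)}$ in the normalization, versus $y^{-(4+4k)}$ in the smallness assumption, leave a $y^{-4}$ gain that (combined with the Hardy-type lemma for weighted $y\ge 1$ integrals stated above) yields the tail bound directly.

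The main obstacle will be the tail estimate for the first inequality: the weight $y^{-4}(1+|\log y|^2)^{-1}$ is precisely the borderline weight that places the resonant mode $\Lambda Q\sim -8/y^2$ just inside the ambient space, so one must argue carefully that the orthogonality condition survives the limit and that the exterior contribution to the normalization is genuinely controlled by $\|Hu_n\|_{L^2}$ alone rather than spilling mass to infinity. This is the step where the precise form of the sub-coercivity in Lemma \ref{lem-subcoerH}, as opposed to a bare spectral gap, is essential; once it is in place the remainder of the argument is routine.
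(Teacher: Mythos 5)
The paper itself provides no proof of this lemma; it cites \cite{schweyer2012type} and refers the reader there, so there is no direct proof in the text to compare against. Your compactness–contradiction scheme is a legitimate route and is consistent in spirit with the Raph\"ael–Schweyer literature: you correctly identify the two essential ingredients (Lemma~\ref{lem-subcoerH} to kill the negative direction, and the Hardy-type bounds of the third lemma in Appendix~A to pass from $\Delta u$ to the weighted RHS), you correctly use $\dot H^2$ regularity to exclude the singular branch $\Gamma$, and you correctly use $(\Lambda Q,\Phi_M)\neq 0$ together with the compact support of $\Phi_M$ to kill the resonance in the limit.

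Two remarks on the details. First, your worry that the tail estimate in the first inequality is the ``main obstacle'' because $y^{-4}(1+|\log y|^2)^{-1}$ is ``borderline for $\Lambda Q$'' is misplaced: since $\Lambda Q\sim -8/y^2$, the integrand $\frac{|\Lambda Q|^2}{y^4}y^3\sim y^{-5}$ is absolutely integrable at infinity even without the logarithmic correction, so $\Lambda Q$ sits comfortably (not ``just'') inside the ambient space. The log weight is present to make the Hardy bound $\int\frac{|u|^2}{y^4(1+|\log y|^2)}\lesssim\int\frac{|\partial_y u|^2}{y^2}+\int_{1\le y\le2}|u|^2$ valid, not to accommodate $\Lambda Q$. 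In fact your tail step can be streamlined: once one knows $\|Hu_n\|_{L^2}\to 0$ and the weak limit vanishes (hence $u_n\to 0$ in $L^2_{loc}$), the decay $|V|\lesssim (1+y)^{-4}$ together with the normalization gives $\|Vu_n\|_{L^2}\to 0$, hence $\|\Delta u_n\|_{L^2}\to 0$, and then the Appendix~A Hardy bounds directly force all three RHS terms to vanish — no ad hoc multiplier is needed. Second, for the second inequality your argument is only sketched; the exterior step requires a \emph{weighted} version of the $\int\frac{|\partial_y u|^2}{y^2}\lesssim\int|\Delta u|^2$ bound, namely control of $\int_{y\ge 1}\frac{|\partial_y u_n|^2}{y^{6+4k}(1+|\log y|^2)}$ by $\int_{y\ge 1}\frac{|Hu_n|^2}{y^{4+4k}(1+|\log y|^2)}$ plus local terms. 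This is not one of the listed Appendix~A lemmas and needs its own integration-by-parts with the weighted multiplier; you gesture at a ``$y^{-4}$ gain'' but do not actually establish the inequality. That is the one genuine gap worth closing.
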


In the interior domain, we have the following estimates: 
\begin{lem}\label{lem-interior domain estimate}
(i) For any $u\in H_{rad}^{2k}(y\leq1)$, we have 
\begin{equation}
||u||_{H^{2k}(y\leq1)}\lesssim||u||_{L^{2}(y\leq1)}+||\Delta^{k}u||_{L^{2}(y\leq1)}.\label{interior1}
\end{equation}

(ii) For any $u\in H_{rad}^{2k+1}(y\leq1)$, we have 
\begin{equation}
||u||_{H^{2k+1}(y\leq1)}\lesssim||u||_{L^{2}(y\leq1)}+||\nabla\Delta^{k}u||_{L^{2}(y\leq1)}.\label{interior2}
\end{equation}
\end{lem}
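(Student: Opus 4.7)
The plan is to establish both parts by a joint induction on $k$; the case $k=0$ is a tautology. The engine throughout will be a dimension-four radial integration-by-parts identity: for every smooth radial $w$ on $B_1\subset\mathbb{R}^4$,
\[
\int_0^1(\Delta w)^2\, r^3\,dr \;=\; \int_0^1 w_{rr}^2\,r^3\,dr + 3\int_0^1 w_r^2\,r\,dr + 3\,w_r(1)^2,
\]
obtained by expanding $(w_{rr}+3w_r/r)^2$ and integrating the cross-term by parts once in $r$; the interior boundary contribution at $r=0$ vanishes because radial smoothness forces $w_r(0)=0$. Since for radial $w$ one has $\sum_{|\alpha|=2}|D^\alpha w|^2=w_{rr}^2+3(w_r/r)^2$, and since the term $3\,w_r(1)^2$ at $r=1$ has the favorable sign and can simply be discarded, this identity yields the sharp estimate
\[
\|D^2 w\|_{L^2(B_1)}^2 + 3\,\|w_r/r\|_{L^2(B_1,\,r^3 dr)}^2 \;\le\; \|\Delta w\|_{L^2(B_1)}^2,
\]
valid \emph{up to the boundary} $r=1$, with no domain loss and no lower-order correction. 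Combined with $\|\nabla w\|_{L^2(B_1)}\le \|w_r/r\|_{L^2(B_1,r^3 dr)}$ (since $r\le 1$), this immediately gives (i) at $k=1$.

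For the inductive step of (i) at a general $k\ge 2$: because $\Delta$ preserves radial symmetry, $w:=\Delta^{k-1}u$ is itself radial, and the cornerstone applies to give
\[
\|D^2\Delta^{k-1}u\|_{L^2(B_1)} \;\le\; \|\Delta^k u\|_{L^2(B_1)}.
\]
This then has to be converted into the desired bound on $\|D^{2k}u\|_{L^2}$. To do so, I would expand $\Delta^{k-1}u = \sum_{j=1}^{2k-2} b_{k-1,j}\,u^{(j)}/r^{2k-2-j}$ with explicit coefficients $b_{k-1,j}$, differentiate twice in $r$ to isolate the top-order term $u^{(2k)}$, and absorb the residual pieces $u^{(j)}/r^{2k-j}$ with $j\le 2k-1$ into $\|u\|_{H^{2k-1}(B_1)}$. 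By the inductive hypothesis (ii) at level $k-1$, together with standard interpolation on the compact ball $B_1$, the latter norm is bounded by $\|u\|_{L^2}+\|\Delta^k u\|_{L^2}$. The remaining intermediate Sobolev norms $\|u\|_{H^m}$ for $m<2k$ are then controlled by standard interpolation on the bounded domain. Part (ii) at level $k$ is derived by a parallel argument: one applies the cornerstone identity once more after taking an extra radial derivative, so that $\|\nabla\Delta^k u\|_{L^2}$ controls the top $(2k+1)$-st derivative up to already-bounded lower-order contributions, and combines with (i) at level $k$.

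The main technical obstacle is that individual pieces $u^{(j)}/r^{2k-j}$ with $j<2k$ need \emph{not} lie in $L^2(B_1,r^3\,dr)$: the weight $r^{2j-4k+3}$ becomes strongly singular at the origin for small $j$, and only the precise combination prescribed by $\Delta^{k-1}$ yields a function which is smooth at $r=0$. The induction must therefore propagate these cancellations rather than handle each piece separately. The cleanest way to book-keep them is the substitution $s=\log r$: in this variable $r^{2k}\Delta^k$ corresponds to a polynomial differential operator $P_k(\partial_s)$ that factors into commuting first-order operators of the form $(\partial_s-2j)(\partial_s-2j+2)$, and the weighted $L^2(B_1,r^3\,dr)$ norm becomes $L^2(e^{4s}\,ds)$ on $(-\infty,0]$. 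The whole chain of estimates then reduces to iterated one-dimensional integration-by-parts identities whose boundary terms at $s=0$ all carry manifestly non-negative signs and may be discarded, exactly paralleling the $k=1$ case and preserving the estimate up to $y=1$ without any shrinkage of the domain.
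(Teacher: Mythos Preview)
Your $k=1$ argument is correct: the radial identity with the favorable boundary term $3w_r(1)^2\ge 0$ gives full $H^2$ control from $\|\Delta w\|_{L^2}$ on the ball with no domain loss. The inductive step, however, has a genuine gap. You correctly flag the obstacle---that the pieces $u^{(j)}/r^{2k-j}$ are not individually in $L^2$ near the origin---and propose the substitution $s=\log r$. But the key assertion that after this substitution ``boundary terms at $s=0$ all carry manifestly non-negative signs'' is stated without proof and is \emph{not} manifest: for $k\ge 2$ the iterated integrations by parts generate many boundary contributions of the form $\partial_s^{i}u(0)\,\partial_s^{j}u(0)$ with coefficients of both signs, and packaging them into a non-negative quadratic form (if that is even possible) is real work you have not supplied. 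Nor have you explained how control of $\|P_k(\partial_s)u\|$ (which, as you note, carries \emph{no} exponential weight in the $s$-variable) converts back to the full $H^{2k}(B_1)$ norm, which in the $s$-variable is a combination of $\partial_s^{j}u$ with nontrivial $j$-dependent weights. As written, the argument is complete only for $k=1$.

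The paper bypasses all of this with a soft argument of a quite different flavor. It first invokes standard elliptic regularity for the iterated Dirichlet problem to obtain $\|v\|_{H^{2k}}\lesssim\|v\|_{L^2}+\|\Delta^{k}v\|_{L^2}$ whenever $v=\Delta v=\cdots=\Delta^{k-1}v=0$ on $\{y=1\}$. A general radial $u$ is then reduced to this case by subtracting an explicit polynomial $\sum_{i=0}^{k-1}c_i\,y^{2i}$, where the $c_i$ are determined by a lower-triangular linear system so that $v=u-\sum c_i y^{2i}$ meets the boundary conditions; this exploits radiality only through the fact that the boundary data $\Delta^{i}u|_{y=1}$ are single numbers. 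The coefficients satisfy $\sum|c_i|\lesssim\sum_{i<k}|\Delta^{i}u(1)|\lesssim\|u\|_{W^{2k-2,\infty}(y\le 1)}$, and this is absorbed via the interpolation $\|u\|_{W^{2k-2,\infty}}\le\gamma\|u\|_{H^{2k}}+C(\gamma)\|u\|_{L^2}$ with $\gamma$ small. No explicit radial identities or sign miracles are needed.
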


\begin{proof}
(i) Using the standard PDE theory, we know that for $v\in H^{2k}(y\le1)$ with
$v|_{y=1}=0$ there holds 
\begin{equation}
||v||_{H^{2k}(y\leq1)}\lesssim||v||_{L^{2}(y\leq1)}+||\Delta v||_{H^{2k-2}(y\leq1)}.
\end{equation}
By induction, it is not hard to show that for any $v\in H^{2k}(y\leq1)$ with
\[
v=\Delta v=\Delta^{2}v=...=\Delta^{k-1}v=0\for\  y=1,
\]
we have 
\begin{equation}
||v||_{H^{2k}(y\leq1)}\lesssim||v||_{L^{2}(y\leq1)}+||\Delta^{k}v||_{L^{2}(y\leq1)},\label{interior1-1}
\end{equation}
Now we define $\varphi_{m}(y)=y^{2m}$ so that
\[
\Delta^{m}\varphi_{m}\neq 0\quad
\text{and}\quad \Delta^{n}\varphi_{m}=0,\  \forall\  n>m.
\]
Let $v:=u-\sum_{i=0}^{k-1}c_{i}\varphi_{i}$,
where $c_{i}$ is chosen such that 
\[
v=\Delta v=...=\Delta^{k-1}v=0\ \for\ y=1.
\]
which is equivalent to 
\begin{equation}
\begin{bmatrix}\varphi_{0} & \varphi_{1} & ... & \varphi_{k-1}\\
\Delta\varphi_{0} & \Delta\varphi_{1} & ... & \Delta\varphi_{k-1}\\
 & ... & ...\\
\Delta^{k-1}\varphi_{0} & \Delta^{k-1}\varphi_{1} & ... & \Delta^{k-1}\varphi_{k-1}
\end{bmatrix}\begin{bmatrix}c_{0}\\
c_{1}\\
\vdots\\
c_{k-1}
\end{bmatrix}=\begin{bmatrix}u(1)\\
\Delta u(1)\\
\vdots\\
\Delta^{k-1}u(1)
\end{bmatrix}.
\end{equation}
Note that the matrix above is upper triangular with nonzero diagonal
elements, so $c_{i}$ can be solved. Besides $\sum_{i=0}^{k-1}|c_{i}|\lesssim\sum_{i=0}^{k-1}|\Delta^{i}u(1)|$.

We apply (\ref{interior1-1}) to $v$ and conclude 
\begin{align}
||u||_{H^{2k}(y\leq1)} & \lesssim||v||_{H^{2k}(y\leq1)}+\sum_{i=0}^{k-1}|c_{i}|\\
 & \lesssim||v||_{L^{2}(y\leq1)}+||\Delta^{k}v||_{L^{2}(y\leq1)}+\sum_{i=0}^{k-1}|c_{i}|\nonumber \\
 & \lesssim||u||_{L^{2}(y\leq1)}+||\Delta^{k}u||_{L^{2}(y\leq1)}+\sum_{i=0}^{k-1}|\Delta^{i}u(1)|.\nonumber 
\end{align}
Elementary Sobolev interpolation shows 
\begin{equation}
\sum_{i=0}^{k-1}|\Delta^{i}u(1)|\lesssim||u||_{W^{2k-2,\infty}(y\leq1)}\lesssim\gamma||u||_{H^{2k}(y\leq1)}+C(\gamma)||u||_{L^{2}(y\leq1)},
\end{equation}
for any $\gamma>0$. Picking $\gamma$ small enough we obtain \eqref{interior1}.

(ii) The proof is completely similar based on the fact 
\[
||v||_{H^{2k+1}(y\le1)}\lesssim||v||_{L^{2}(y\le1)}+||\nabla\Delta^{k}v||_{L^{2}(y\le1)},\ \forall\, v\in H^{2k+1}(y\le1),\ v|_{y=1}=0,
\]
so we omit the details here. 
\end{proof}

\section{INTERPOLATION BOUNDS}

We recall that $\varepsilon$ satisfies the following orthogonality
conditions
\[
(H^{i}\varepsilon,\Phi_{M})=0,\quad \ i=0,1,2
\]
and upper bounds
\[
\varXi_{1}:=\int|\partial_{y}\varepsilon|^{2}\leq10\sqrt{b_{1}(0)},
\]
\[
\varXi_{2}\leq b_{1}^{\frac{4}{3}}|\log b_{1}|^{K},\quad\varXi_{4}\leq b_{1}^{4}|\log b_{1}|^{K}\quad\text{and}\quad\varXi_{6}\leq K\frac{b_{1}^{6}}{|\log b_{1}|^{2}},
\]
where $\varXi_{2k}=\int|H^{k}\varepsilon|^{2}$. We shall collect
relevant bounds for $\varepsilon$ needed in this paper. 
\begin{lem}
We have the following estimates with constants dependent of $M$,

(i) Weighted bounds for $H^{i}\varepsilon$: For $1\le k\le3,\ 0\leq i\leq k-1$, 
\begin{equation*}
\int_{y\leq1}(|H^{i}\varepsilon|^{2}+|\partial_{y}H^{i}\varepsilon|^{2})+\int_{y\geq1}\left(\frac{|H^{i}\varepsilon|^{2}}{y^{4k-4i}(1+|\log y|^{2})}+\frac{|\partial_{y}H^{i}\varepsilon|}{y^{4k-4i-2}(1+|\log y|^{2})}\right)\lesssim\varXi_{2k}.
\end{equation*}

(ii) Weighted bounds for $\partial_{y}^{i}\varepsilon$: 
\begin{equation}
\int\left(\frac{|\varepsilon|^{2}}{y^{4}(1+|\log y|^{2})}+\frac{|\partial_{y}\varepsilon|^{2}}{y^{2}}+|\partial_{y}^{2}\varepsilon|^{2}\right)\lesssim\varXi_{2},
\end{equation}
\begin{equation}
\int_{y\geq1}\frac{|\partial_{y}^{i}\varepsilon|^{2}}{y^{4k-2i}(1+|\log y|^{2})}\lesssim\varXi_{2k},\for\ k=2,3,\  0\leq i\leq2k,
\end{equation}
\begin{equation}
||\varepsilon||_{H^{6}(y\leq1)}^{2}\lesssim\varXi_{6}.
\end{equation}

(iii) Lossy bounds for $y\ge1$: 
\begin{equation}
\int_{y\geq1}\frac{1+|\log y|^{C}}{y^{12-2i}}|\partial_{y}^{i}\varepsilon|^{2}\lesssim|\log b_{1}|^{C_{1}(C)}\varXi_{6},\for\ 0\leq i\leq4,
\end{equation}
\begin{equation}
\int_{y\geq1}\frac{1+|\log y|^{C}}{y^{8-2i}}|\partial_{y}^{i}\varepsilon|^{2}\lesssim|\log b_{1}|^{C_{1}(C)}\varXi_{4},\for\ 0\leq i\leq2,
\end{equation}
\begin{equation}
\int_{y\geq1}\frac{1+|\log y|^{C}}{y^{4-2i}}|\partial_{y}^{i}\varepsilon|^{2}\lesssim|\log b_{1}|^{C_{1}(C)}\varXi_{2},\for\ i=0,1.
\end{equation}

(iv) Point-wise bounds for $y\ge1$: 
\begin{align}
\begin{split}
||\varepsilon(1+|\log y|^{C})||_{L^{\infty}(y\geq1)}^{2}+||y\partial_{y}\varepsilon(1+|\log y|^{C})||_{L^{\infty}(y\geq1)}^{2}\lesssim |\log b_{1}|^{C_{1}(C)}\varXi_{2},
\end{split}
\end{align}
\begin{align}
\left\|\frac{1+|\log y|^{C}}{y^{2}}\varepsilon\right\|_{L^{\infty}(y\geq1)}^{2}
& +\left\|\frac{1+|\log y|^{C}}{y}\partial_{y}\varepsilon\right\|_{L^{\infty}(y\geq1)}^{2}\nonumber\\
& +\left\|(1+|\log y|^{C})\partial_{y}^{2}\varepsilon\right\|_{L^{\infty}(y\geq1)}^{2}
\lesssim |\log b_{1}|^{C_{1}(C)}\varXi_4,
\end{align}
\begin{equation}
\left\|\frac{y\partial_{y}^{3}\varepsilon}{1+\log y}\right\|_{L^{\infty}(y\geq1)}^{2}\lesssim\varXi_{4}.
\end{equation}

(v) Point-wise bounds for $y\le1$: 
\begin{equation}
||\varepsilon||_{L^{\infty}(y\leq1)}^{2}+||\partial_{y}\varepsilon||_{L^{\infty}(y\leq1)}^{2}+||\partial_{y}^{2}\varepsilon||_{L^{\infty}(y\leq1)}^{2}+||\partial_{y}^{3}\varepsilon||_{L^{\infty}(y\leq1)}^{2}\lesssim\varXi_{6}.\label{pointwise2}
\end{equation}
\end{lem}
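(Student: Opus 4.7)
The plan is to reduce every assertion to the weighted coercivity of $H$ (Lemma \ref{lem:weightedcoer}), the interior regularity bound (Lemma \ref{lem-interior domain estimate}), and standard Hardy/Sobolev interpolation in the radial variable. The orthogonality conditions $(H^j\varepsilon,\Phi_M)=0$ for $0\le j\le 2$ will be repeatedly used to legitimately invoke Lemma \ref{lem:weightedcoer} on $\varepsilon$, $H\varepsilon$ and $H^2\varepsilon$.

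First, for (i), I would note that $u:=H^i\varepsilon$ satisfies $(u,\Phi_M)=(H^i\varepsilon,\Phi_M)=0$ for $0\le i\le 2$. Applying the second inequality of Lemma \ref{lem:weightedcoer} to $u$ with the index $4+4(k-i-1)$ in the weights gives the stated bound of $\int|\partial_y u|^2/y^{6+4(k-i-1)}(1+|\log y|^2)$ and its undifferentiated analogue by $\int|Hu|^2/y^{4+4(k-i-1)}(1+|\log y|^2)$, which by discarding the weight is $\le \varXi_{2k}$. The interior part $y\le 1$ of (i) follows similarly.

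For (ii), the first bound is exactly the first inequality of Lemma \ref{lem:weightedcoer} applied with $u=\varepsilon$. For the second bound when $k=2,3$, I would argue by induction on $i$: the cases $i$ even follow from part (i) combined with $|V|\lesssim (1+y)^{-4}$ (so that $\Delta^j \varepsilon$ differs from $H^j\varepsilon$ by terms controlled by lower $\varXi_{2k'}$); the cases $i$ odd follow by the standard identity $\|\partial_y u\|_{L^2(\text{weighted})}^2 \lesssim \|u\|\cdot\|\Delta u\|$ (weighted Cauchy--Schwarz after integration by parts). The $H^6(y\le 1)$-bound is a direct application of Lemma \ref{lem-interior domain estimate} (i) with $k=3$, since $\Delta^3\varepsilon = H^3\varepsilon + (\text{lower order controlled by }\varXi_2+\varXi_4)$.

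Parts (iii) and (iv) are the logarithmic-loss statements and the pointwise estimates. For (iii), the strategy is to split the integral at $y=B_1$: on $\{1\le y\le B_1\}$ one has $1+|\log y|^C\lesssim |\log b_1|^C$, so the bound reduces to part (ii) times $|\log b_1|^C$; on $\{y\ge B_1\}$ one uses the sharper weighted bound from (ii) times a supplementary factor $|\log y|^C/(1+|\log y|^2)^{-1}\lesssim |\log b_1|^{C+2}$. The constant $C_1(C)$ is just $C+O(1)$. For (iv), I would appeal to the one-dimensional Sobolev embedding
\[
|f(y)|^2 \lesssim \int_y^\infty (|f(r)|^2/r + |f'(r)|^2 r)\,dr
\]
applied to $f(y)=y^{-\alpha}(1+|\log y|)^C \varepsilon(y)$, $\partial_y\varepsilon$, or $\partial_y^2\varepsilon$, where the resulting integrals are then controlled by the weighted $L^2$ bounds already obtained in (iii) (or (ii)). The $\partial_y^3\varepsilon$ bound uses the same idea but with $\varXi_4$ playing the role of the higher norm.

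Finally, (v) is immediate from Lemma \ref{lem-interior domain estimate} (i) with $k=3$, radial Sobolev embedding $H^6(y\le 1)\hookrightarrow W^{3,\infty}(y\le 1)$ (valid since $6-3>d/2=2$ in $d=4$), and the just-established $\|\varepsilon\|_{H^6(y\le 1)}^2\lesssim \varXi_6$. The main obstacle is the careful bookkeeping of logarithmic losses in (iii) and (iv): one must verify that trading a $1/(1+|\log y|^2)$ weight for a $1+|\log y|^C$ factor costs exactly $|\log b_1|^{C_1(C)}$ and no more, which requires tracking how far one must integrate (up to $y\sim B_1$) to absorb the constant-mode obstruction removed by the orthogonality $(\varepsilon,\Phi_M)=0$.
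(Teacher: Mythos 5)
The paper gives essentially no proof here: it writes only that ``the proof is parallel to the Appendix B in \cite{schweyer2012type} with the help of Appendix A in our article, so we omit the details,'' so there is no internal argument to compare against. Your outline is broadly the one that Schweyer's Appendix B carries out: iterate the weighted coercivity of Lemma \ref{lem:weightedcoer} on $u=H^{i}\varepsilon$ (legitimate since $(H^{i}\varepsilon,\Phi_M)=0$ for $0\le i\le 2$), pass from $H^{i}\varepsilon$ to $\partial_y^{j}\varepsilon$ via the commutator $\Delta=\partial_y^2+\tfrac{3}{y}\partial_y$ and the decay $|\partial_y^m V|\lesssim (1+y)^{-4-m}$, use Lemma \ref{lem-interior domain estimate} on $y\le 1$, and then deduce the pointwise bounds by a one-dimensional Sobolev/Hardy embedding in the radial variable. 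Two small remarks on your write-up of (i)--(ii): the iteration actually needs both inequalities of Lemma \ref{lem:weightedcoer} (the first for the innermost step, the second with $k'=0,1$ for further steps), and the index $4+4(k-i-1)$ you quote is the \emph{output} weight, not the parameter $k'$ fed into the lemma — worth stating explicitly so the bookkeeping is transparent.

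There is, however, a genuine gap in your argument for part (iii). You split $\int_{y\ge 1}$ at $y=B_1$ and claim that on $\{y\ge B_1\}$ the factor $|\log y|^{C}(1+|\log y|^{2})\lesssim|\log b_1|^{C+2}$. This is false: $|\log y|$ is unbounded on $\{y\ge B_1\}$, so the pointwise comparison only goes the right way on $\{1\le y\le B_1\}$. To handle the tail one must trade some of the polynomial weight for the logarithms, e.g.\ write
\[
\int_{y\ge B_1}\frac{(1+|\log y|^{C})|\partial_y^i\varepsilon|^{2}}{y^{12-2i}}
=\int_{y\ge B_1}\frac{(1+|\log y|^{C})(1+|\log y|^{2})}{y^{4}}\cdot\frac{|\partial_y^i\varepsilon|^{2}}{y^{8-2i}(1+|\log y|^{2})},
\]
bound the first factor by its value at $y=B_1$ (it is decreasing there), obtain a prefactor $\approx b_1^{2}|\log b_1|^{C-2}$, and then control the remaining integral by $\varXi_4$ via part (ii) with $k=2$; finally one must invoke the bootstrap bounds $\varXi_4\lesssim b_1^{4}|\log b_1|^{K}$ and $\varXi_6\lesssim b_1^{6}/|\log b_1|^{2}$ to compare the two sides. (Equivalently, one observes that in every place the paper actually uses (iii) the integrand carries an implicit $\1_{y\le 2B_1}$ cutoff coming from $\tilde\alpha=\chi_{B_1}\alpha$, which makes the tail vacuous.) Either way, the tail region cannot be absorbed by the naive ``replace $|\log y|$ by $|\log b_1|$'' step you propose, and (iii)–(iv) require this extra care. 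Once that is fixed, the rest of your sketch (in particular (v) via $H^{6}(y\le 1)\hookrightarrow W^{3,\infty}(y\le 1)$ in $d=4$) is sound.
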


\begin{proof}
The proof is parallel to the Appendix B in \cite{schweyer2012type} with the help of
Appendix A in our article, so we omit the details here. 
\end{proof}

\small

\bibliographystyle{plainnat}
\bibliography{ref}

\end{document}